\newcommand{\vk}{\varkappa}
\newcommand{\BR}{\mathbb{R}}
\newcommand{\SL}{\sum\limits}
\newcommand{\de}{\delta}
\newcommand{\CF}{\mathcal F}
\newcommand{\MP}{\mathbf P}
\newcommand{\CL}{\mathcal L}
\newcommand{\Oa}{\Omega}
\newcommand{\si}{\sigma}
\newcommand{\pa}{\partial}
\renewcommand{\phi}{\varphi}
\newcommand{\eps}{\varepsilon}
\newcommand{\la}{\lambda}
\newcommand{\Ra}{\Rightarrow}
\newcommand{\ol}{\overline}
\newcommand{\norm}[1]{\lVert#1\rVert}
\renewcommand{\comment}[1]{}
\newcommand{\md}{\mathrm{d}}
\DeclareMathOperator{\supp}{supp}
\DeclareMathOperator{\TV}{TV}
\DeclareMathOperator{\mes}{mes}
\DeclareMathOperator{\Int}{int}
\DeclareMathOperator{\dist}{dist}
\begin{document}

\theoremstyle{plain}
\newtheorem{thm}{Theorem}[section]
\newtheorem*{thmnonumber}{Theorem}
\newtheorem{lemma}[thm]{Lemma}
\newtheorem{prop}[thm]{Proposition}
\newtheorem{cor}[thm]{Corollary}
\newtheorem{open}[thm]{Open Problem}
\numberwithin{equation}{section}

\theoremstyle{definition}
\newtheorem{defn}{Definition}[section]
\newtheorem{asmp}{Assumption}[section]
\newtheorem{notn}{Notation}
\newtheorem{prb}{Problem}

\theoremstyle{remark}
\newtheorem{rmk}{Remark}[section]
\newtheorem{exm}{Example}
\newtheorem{clm}{Claim}

\author{Tomoyuki Ichiba and Andrey Sarantsev}

\title[Walsh Diffusions]{Convergence and Stationary Distributions\\ for Walsh Diffusions}

\address{\scriptsize Department of Statistics and Applied Probability, University of California, Santa Barbara}

\email{ichiba@pstat.ucsb.edu}

\address{\scriptsize Department of Statistics and Applied Probability, University of California, Santa Barbara}

\email{sarantsev@pstat.ucsb.edu}

\begin{abstract}
A Walsh diffusion on Euclidean space moves along each ray from the origin, as a solution to a stochastic differential equation with certain drift and diffusion coefficients, as long as it stays away from the origin. As it hits the origin, it instantaneously chooses a new direction according to a given probability law, called the spinning measure. A special  example is a real-valued diffusion with skew reflections at the origin. This process continuously (in the weak sense) depends on the spinning measure. We determine a stationary measure for such process, explore long-term convergence to this distribution and establish an explicit rate of exponential convergence. 
\end{abstract}

\keywords{Walsh Brownian motion, Walsh diffusion, stochastic differential equation, stationary distribution, invariant measure, ergodic process, Lyapunov function, reflected diffusion}

\subjclass[2010]{60H10, 60J50, 60J60}

\date{\today. Version 48}

\maketitle

\thispagestyle{empty}

%
%
%

\section{Introduction}

\subsection{Informal description of Walsh Brownian motions and Walsh diffusions} 

Fix a positive integer $d \ge 1$, a dimension of $\BR^d$ with Euclidean norm $\norm{x} := \left(x_1^2 + \ldots + x_d^2\right)^{1/2}$. Take a Borel probability measure $\,\mu\,$ on the unit sphere $\,\mathbb S := \{z \in \BR^d\mid\norm{z} = 1\}\,$. The origin in $\BR^d$ will be denoted by $\mathbf{0}$, to distinguish it from the zero on the real line. With rays $\mathcal R_{\theta} := \{s\,  \theta \in \mathbb R^{d}\mid s \ge 0\}$, $\theta \in \mathbb S$, we see $\mathbb R^{d} = \cup_{\theta \in \mathbb S} \mathcal R_{\theta}$. Take a filtered probability space $(\Oa, \CF, (\CF_t)_{t \ge 0}, \MP)$ with the filtration satisfying the usual conditions. A {\it Walsh Brownian motion} in $\,\BR^d\,$ with {\it spinning measure} $\mu$ is an adapted,  continuous stochastic process $X = (X(t), t \ge 0)$ which is informally described as follows. 

\smallskip 

Let us take a one-dimensional reflected Brownian motion $S = (S(t),\, t \ge 0)$ with values in $\BR_+ := [0, \infty)$, starting from the origin and reflected at the origin. Its sample path can be split into {\it excursions} in a measurable way.  For {\it every} excursion, choose an $\mathbb S$-valued random variable $\, \bm \theta \,$ distributed in $\mu$, independent of these variables for other excursions and of the underlying reflected Brownian motion $S$. Define the $d$-dimensional stochastic process $X(t) =  {\bm \theta}\, S(t)\,$ for {\it each} $t$ in the open interval which the corresponding excursion straddles, and $X(t) = \mathbf 0$ for all other $t$ (where $S(t) = 0$). We call $X=(X(t), t \ge 0) $ a Walsh Brownian motion in $\mathbb R^{d}$ with spinning measure $\, \mu\,$. 

\smallskip 

A particular case is the {\it skew Brownian motion}, when the measure $\mu$ is supported on two opposite polar points (North Pole and South Pole) on $\mathbb S$. In this case, the process is essentially one-dimensional. In fact, the skew Brownian motion is usually defined as an $\BR$-valued process, where, instead of North and South Pole, we choose the positive and negative half-line. For example, if the probabilities attached to the North and South Poles are equal to $1/2$, then we re-create a standard Brownian motion. Usually, Walsh Brownian motions in $\mathbb R^{2}$ are considered in the literature, however, it was originally pointed out by    Walsh in \cite{Walsh78} that the construction and the corresponding theory for the general $d$-dimensional case is basically the same as the case of $d = 2$. 

\medskip

%


We can also construct {\it Walsh diffusions}, given the measure $ \mu $ on $\mathbb S$ and some coefficients $g : \BR^d \to \BR$ and $\si : \BR^d \to \BR_+$. Similarly to a Walsh Brownian motion, Walsh diffusions move along the rays $\mathcal R_{\theta} $, $\theta \in \mathbb S$,  
that is, as long as this process is on the ray $\mathcal R_{\theta}$ for a certain $\theta \in \mathbb S$, it behaves as a solution of a stochastic differential equation (SDE) on $(0, \infty)$ with drift coefficient $g(\cdot, \theta)$ and diffusion coefficient $\si^2(\cdot, \theta)$. If necessary, we stress their dependence on both $\theta$ and $r = \norm{x}$ by writing them as $g(r, \theta)$ and $\si(r, \theta)$. 
Note that the meaning of spinning measure $\mu$ for this Walsh diffusion is slightly different from what we describe for the Walsh Brownian motion in the above. 
More precisely, for every subset $A \subseteq \mathbb S$, $\mu(A)$ is the share of local time accumulated at the origin corresponding to the excursions of the Walsh diffusion on rays $\mathcal R_{\theta},\, \theta \in A$ (see (\ref{eq:rays-mu}) below).  

\medskip

Our primary purpose of study in this paper is the long-term convergence of the Walsh diffusion to the stationary measure under appropriate conditions. Along the course, we examine in detail the local behavior of the Walsh diffusions in the neighborhood of the origin, and examine the Feller continuity and some other properties of Walsh diffusions.  
%
%

\subsection{Historical review} Walsh Brownian motion was introduced in \cite{Walsh78} and further studied in \cite{BPY89}, in the two-dimensional context (but the results are immediately transferred to $\BR^d$ for $d > 2$). In much of the existing literature, the support of the spinning measure $\mu$ is finite, i.e., $\supp\mu = \{\theta_1, \ldots, \theta_k\} \subset \mathbb S$, where $\theta_{1}, \ldots , \theta_{k}$ are $k$ distinct points in $\mathbb S$. In this case, Walsh Brownian motion or Walsh diffusion has effective state space $\{r\theta_i,\, i = 1, \ldots, k,\, r \ge 0\}$, and is sometimes called a {\it spider}.  Filtrations generated by Walsh Brownian motion on a spider were studied in \cite{Tsirelson, Watanabe}. A construction of Walsh Brownian motion and, more generally, a Walsh diffusion, on a spider via pinching points together was done in \cite{Evans}. It\^o's formula for Walsh Brownian motion with general spinning measure was proved in \cite{Short}. Stochastic calculus for general tree-valued diffusion processes is developed in \cite{Sheu, Wentzell, Picard}. A Dirichlet form approach was used in \cite{ChenFukushima} to construct Walsh Brownian motion with spinning measure $\mu$, and in \cite[section 7.5, section 7.6]{ChenBook} for the spider. Here, we apply this method of Dirichlet forms to find stationary measures of Walsh diffusions with general spinning measure. We also use Dirichlet forms to construct reflected Walsh diffusions. 
Stochastic flows and harmonic functions for Walsh Brownian motion were studied in \cite{Hajri} and \cite{Patrick}, respectively. 
Walsh semimartingales and diffusions, with arbitrary spinning measure $\mu$, were introduced in \cite{IchibaNew} and further studied in \cite{MinghanNew} with control problems. Exponential ergodicity for a related class of Markov processes with {\it random switching} was recently studied  in \cite{Switching}. 

\subsection{Overview of the paper} In section 2, we introduce notation, define Walsh Brownian motions as well as Walsh diffusions, and study some of their elementary properties. Section 2, for the most part, does not contain new results; it is a review of \cite{MinghanNew}. In section 3, we construct Walsh diffusions using a Dirichlet form and the method of {\it one-point reflection} from the paper \cite{ChenFukushima} and the book \cite[section 7.5, section 7.6]{ChenBook}. This alternative construction  helps us to find a stationary measure for a Walsh diffusion (which is, however, not necessarily a stationary distribution). In section 4, we discuss continuous dependence of the law of a Walsh diffusion $X = (X(t),\, t \ge 0)$ on the spinning measure $\mu$ and the initial condition $X(0) = x \in \mathbb R^{d}$. For the case of a Walsh Brownian motion, we quantify this continuity, effectively saying that the law of a Walsh Brownian motion is a H\"older continuous function of $\mu$ 
with respect to a Wasserstein distance on $\BR^d$. 

\smallskip

In section 5, we study additional properties of Walsh diffusions, which were not considered in the previous paper \cite{MinghanNew}: positivity of the transition kernel with respect to a reference measure on $\BR^d$, and Feller continuity. In section 6, we construct Lyapunov functions for Walsh diffusions to show {\it ergodicity:} existence and uniqueness of a stationary distribution $\pi$, and convergence to $\pi$ in the total variation norm (or even stronger norms) as $t \to \infty$. Under some more restrictive conditions, we also prove {\it uniform ergodicity}: exponentially fast convergence to $\pi$  as $t \to \infty$. 

\smallskip

Our main contribution is to find explicit estimates of the rate of exponential convergence for Walsh diffusions, extending the ones in \cite{LMT1996, MyOwn12} for reflected diffusions and jump-diffusions on a positive half-line. These results are then applied to a {\it non-reflected} diffusion on the whole real line. 
We would like to stress that often, it is relatively easy to prove that a diffusion process (or a discrete-time Markov chain) on $\mathbb R^{d}$ 
converges to its stationary distribution exponentially fast, but difficult to find or estimate an explicit rate of exponential convergence. Some partial results in this direction are provided in the papers~\cite{BCG2008, Davies,  Explicit, RR1996, RT1999, RT2000}. 


\smallskip

%



\section{Background and Definitions} 

\subsection{Notation} Recall that in $\BR^d$, the Euclidean norm is defined by $\norm{x} := (x_1^2 + \cdots + x_d^2)^{1/2}$ for $x = (x_1, \ldots, x_d)$. We shall denote the origin in $\BR^d$ by $\mathbf{0} = (0, \ldots, 0)$. Let $\mathbb S := \{x \in \BR^d\mid \norm{x} = 1\}$ and $\mathbb B := \{x \in \BR^d\mid \norm{x} \le 1\}$ be the unit sphere and the unit ball in $\BR^d$. For every $x \in \BR^d\setminus\{\mathbf{0}\}$, we write $x = (r, \theta)$ or simply $x = r\theta$ if $r = \norm{x} > 0$ and $\theta = \arg(x) := x/r \in \mathbb S$ (polar coordinates). We denote by $\, (x)_{-}\,$ the non-positive part of a real number $\, x \in \mathbb R\,$. We define the {\it tree-metric} as follows: for every $x_1, x_2 \in \BR^d$ with $r_i := \norm{x_i}$, $i = 1, 2$, 
\begin{equation}
\label{eq:tree}
\dist(x_1, x_2) := 
\begin{cases}
r_1,\ \ \mbox{if}\ \ r_2 = 0;\\
r_2,\ \ \mbox{if}\ \ r_1 = 0;\\
|r_1 - r_2|,\ \ \mbox{if}\ \ r_1r_2 \ne 0\ \ \mbox{and}\ \ \arg(x_1) = \arg(x_2);\\
r_1 + r_2,\ \ \mbox{if}\ \ r_1r_2 \ne 0\ \ \mbox{and}\ \ \arg(x_1) \ne \arg(x_2).
\end{cases}
\end{equation}
This essentially means that we have the usual Euclidean distance on each ray 
$\mathcal R_{\theta} $ 
but a continuous movement cannot jump between rays, except through the origin. One can think of it as railroads converging to the central city; this is why it is sometimes also called the {\it railway metric}. The corresponding topology is called the {\it tree-topology}. This topology is stronger than the usual Euclidean topology. That is, convergence in the tree-topology means also convergence in the Euclidean sense, but the converse is not true. Being an open, closed, or Borel set in the Euclidean metric implies being, respectively, open, closed, or Borel in the tree metric, but not vice versa. When we refer to Borel subsets of $\BR^d$ below, we mean ``Borel in the Euclidean topology". 
The property of boundedness is equivalent in these two metrics; but the property of compactness is not, as described in the following remark. 

\begin{rmk} In the Euclidean topology in $\BR^d$, a closed bounded set is compact. In the tree-topology, this is no longer true in the general case. Here is a counterexample. $\{(1, \theta)\mid \theta \subseteq \Theta\}$ for an infinite subset $\Theta \subseteq \mathbb S$ is closed and bounded, but not compact in the tree topology. However, if a set $A \subseteq \BR^d$ is bounded and closed in the tree topology, and the set $\{\theta \in \mathbb S\mid \exists\, r > 0: r\theta \in A\}$ is finite, then it can be shown that the set $A$ is compact in the tree topology. 
\label{rmk:compact-tree}
\end{rmk}

We can define two concepts and spaces of continuity of function $x : [0, T] \to \BR^d$ for every $T > 0$. 

\smallskip

\noindent (a) Continuity in the Euclidean norm $\norm{\cdot}$; this space is denoted by $C([0, T], \BR^d)$, with the norm
\begin{equation}
\label{eq:max-norm}
\norm{x}_{T} := \max\limits_{t \in [0, T]}\norm{x(t)}, \quad x \in C([0, T], \mathbb R^{d}). 
\end{equation}
For $d = 1$ we simply write $C[0, T]$, instead of $C([0, T], \BR^d)$. 

\smallskip

\noindent (b) Continuity in the tree-metric (\ref{eq:tree}); this space is denoted by $C_{t}([0, T], \BR^d)$, with the metric
$$
\dist_{T}(x, y) := \max\limits_{t \in [0, T]}\dist(x(t), y(t)); \quad x , y \in C_{t}([0, T], \mathbb R^{d}) . 
$$

\smallskip

Fix a Borel subset $B \subseteq \BR^d$; then $\Int B$ denotes the interior of $B$. For any Borel (signed) measure $\nu$ on $B$ and any function $f : B \to \mathbb R$, we denote by $(\nu, f)$ the integral of $f$ over $B$ with respect to $\nu$. Given a Borel measurable function $V : B \to [1, \infty)$,  a finite,  signed Borel measure $\nu$ on $B$ has the following {\it $V$-norm}: 
$$
\norm{\nu}_{V} := \sup\limits_{\substack{f : B \to \BR\\ |f| \le V}}\left|(\nu, f)\right|.
$$
When $V \equiv 1$, this norm is called the {\it total variation norm} and is denoted by 
$\norm{\cdot}_{\TV}$.  

For a continuous function $f : [0, T] \to \BR$, and $\de > 0$, we define the {\it modulus of continuity}:
$$
\omega(f, \de, [0, T]) := \max\limits_{\substack{s, t \in [0, T]\\ |s - t| \le \de}}|f(t) - f(s)|.
$$

For every $f : \BR^d \to \BR$, the radial derivative $f'(r, \theta) $ at $(r, \theta)$ along the ray $\mathcal R_{\theta}$ is defined by 
$$
f'(r, \theta) := \lim\limits_{\eps \downarrow 0}\frac1{\eps}\left[f(r + \eps, \theta) - f(r, \theta)\right],\ \ r > 0, \theta \in \mathbb S. 
$$
For $r = 0$, we can also define such (one-sided) derivative in the direction of $\theta$ at the origin:
\begin{equation}
\label{eq:derivative-at-origin}
f'(0, \theta) \equiv f'(0+, \theta) := \lim\limits_{\eps \to 0}\frac1{\eps}\left[f(\eps, \theta) - f(0, \theta)\right].
\end{equation}
Similarly, we can define $f''$, the second-order radial derivative. 
For every Borel subset $A \subseteq \mathbb S$, we define the function $\chi_A : \BR^d \to \BR$ as follows: 
\begin{equation}
\label{eq:f-A}
\chi_A(r, \theta) := 
\begin{cases}
r,\ \mbox{if}\ r > 0,\ \mbox{and}\ \theta \in A,\\
{0},\ \mbox{otherwise}
\end{cases}
= \ \ 1_A(\theta)r.
\end{equation}

\smallskip

Throughout this article, we operate on a filtered probability space 
$(\Oa, \CF, (\CF_t)_{t \ge 0}, \mathbb P)$, with the filtration satisfying the usual conditions.
The arrow $\Ra$ stands for weak convergence of probability measures or random variables. 
For example, we write $X_{n} \Ra X_{0}$ as $n \to \infty$ for random variables $X_{n}$, $n = 0, 1, 2, \ldots $
The symbol $\mes$ stands for the Lebesgue measure on the real line. 

\smallskip

For an $\BR_+$-valued continuous semimartingale $Y = (Y(t),\, t \ge 0)$ its {\it local time at zero} is  
$$
\Lambda^Y := (\Lambda^Y(t),\, t \ge 0),\quad  \Lambda^Y(t) := \lim_{\varepsilon \downarrow 0} \frac{\,1\,}{\,2 \varepsilon\,} \int^{t}_{0} {1}_{[0, \varepsilon)} ( Y(s)) {\mathrm d} \langle Y \rangle (s)  \, , \quad t \ge 0 . 
$$

\subsection{Definitions of Walsh semimartingales and Walsh diffusions} 
Now, let us take a real-valued continuous semimartingale $U = (U(t),\, t \ge 0)$ with $\, \mes \{ t \ge 0 \mid U(t) \, =\,  0 \} \, =\,  0\,$ a.s. 

\begin{defn} An adapted, continuous (in the tree-topology), $\BR^d$-valued process $X = (X(t),\, t \ge 0)$ is called a {\it  semimartingale on rays driven by} $U$, if 
$$
\mes\{t \ge 0\mid X(t) = \mathbf{0}\} = 0\ \mbox{a.s.},
$$
and the norm ${ \lVert X(\cdot) \rVert}$ of ${X(\cdot)}$ has the following Skorohod decomposition: 
$$
\norm{X(t)} = U(t) + \Lambda^{\norm{X}}(t),\ \ \mbox{where}\ \ \Lambda^{\norm{X}}(t) = \max\limits_{0 \le s \le t}(U(s))_-.
$$
This process $\Lambda^{\norm{X}} = (\Lambda^{\norm{X}}(t),\, t \ge 0)$, which is the semimartingale local time of $\norm{X}$ at zero, will be also called the {\it local time of} $X$ {\it accumulated  at the origin}. 
\label{defn:smgle-rays}
\end{defn}

Assume $X$ is a semimartingale on rays driven by $U$. For every Borel subset $A \subseteq \mathbb S$, we can consider the following real-valued process: $
\chi_A(X) = (\chi
_A(X(t)),\ t \ge 0)$, where 
$\,\chi_{A}\,$ is defined in~\eqref{eq:f-A}. It follows from  \cite[Theorem 2.12(ii)]{MinghanNew} that $\chi_{A}(X) $ 
is a real-valued, continuous semimartingale. In \cite[Definition 2.13]{MinghanNew} a Walsh semimartingale is defined as follows. 

\begin{defn}[Walsh semimartingale] Take a semimartingale $X$ on rays. Assume in the sense of Definition~\ref{defn:smgle-rays}, there exists a Borel probability measure $\mu$ on $\mathbb S$ such that, for every Borel $A \subseteq \mathbb S$, the local time of $ \Lambda^{ \chi_{A}(X) 
}$ at the origin satisfies the ``partition of local time'' property a.s. 
\begin{equation}
\label{eq:rays-mu}
\Lambda^{
\chi_A(X)}(t) \equiv \mu(A)\Lambda^{\norm{X}}(t),\ \ t \ge 0.
\end{equation}
Then the semimartingale on rays $X$ is called a {\it Walsh semimartingale} with {\it spinning measure} $\mu$. 
See \cite[Theorem 2.1]{IchibaNew} for a construction of such Walsh semimartingale with (\ref{eq:rays-mu}). 
\label{defn:Walsh-smgle}
\end{defn}

For example, as in Introduction, if $\mathbf{N}$ and $\mathbf{S}$ are North and South Poles, respectively, then a Walsh semimartingale with $\mu = p\delta_{\mathbf{N}} + (1-p)\delta_{\mathbf{S}}$ corresponds to the skew Brownian motion on the real line for $p \in [0, 1]$, where $\,\delta_{\cdot}\,$ is a Dirac measure; and the case $p = 1/2$ corresponds to the usual Brownian motion; and the case $\,p \, =\, 0\,$ or $\,1\,$  corresponds to a reflected Brownian motion. 

Now, let us fix a measurable function $\ell : \mathbb S \to (0, \infty]$ with $\inf_{\mathbb S}\ell > 0$, and define the set 
\begin{equation}
\label{eq:I}
\mathcal I := \{r\theta\mid 0 < r < \ell(\theta),\ \theta \in \mathbb S\}\cup\{\mathbf{0}\}.
\end{equation}
This includes the case when some or all of the values of $\ell(\theta)$ are infinite. For example, if $\ell(\cdot) \equiv + \infty$, then $\mathcal I = \BR^d$. The set $\mathcal I$ from~\eqref{eq:I} is open in the tree-topology, with the boundary
\begin{equation}
\label{eq:boundary-of-I}
\pa\mathcal I := \{\ell(\theta)\theta\mid \theta \in \mathbb S,\ \ell(\theta) < \infty\}.
\end{equation}
Take Borel measurable functions $g : \mathcal I \to \BR$ and $\si : \mathcal I \to (0, \infty)$, and a Borel probability measure $\mu$ on $\mathbb S$. Let $W = (W(t),\, t \ge 0)$ be an $(\CF_t)_{t \ge 0}$-Brownian motion in one dimension. 

\begin{defn} An $\mathcal I$-valued continuous adapted process $X$ is called a {\it Walsh diffusion associated with the triple} $(g, \si, \mu)$, if this is a Walsh semimartingale with spinning measure $\mu$, driven by 
\begin{equation}
\label{eq:defn-Walsh-diffusion}
U(t) := \norm{X(0)} + \int_0^t\left[g(X(s))\, \md s + \si(X(s))\, \md W(s)\right],\ \ t \ge 0.
\end{equation}
In this case, we say that $g$ is the {\it drift coefficient}, $\si^2$ is the {\it diffusion coefficient}, and $\mu$ is the {\it spinning measure} for $X$. For the case $g \equiv 0$ and $\si \equiv 1$, this is called {\it Walsh Brownian motion}, associated with $(0, 1, \mu)$, or simply {\it Walsh Brownian motion with spinning measure} $\mu$. 
\label{defn:Walsh-diffusion}
\end{defn}

\begin{rmk} The {\it effective state space} of a Walsh diffusion in $\mathcal I$ associated with $(g, \si, \mu)$, or any Walsh semimartingale with spinning measure $\mu$, is the following set:
\begin{equation}
\label{eq:effective-state-space}
\mathcal I^{\mu} := \{(r, \theta)\mid 0 < r < \ell(\theta),\, \theta \in \supp\mu\}\cup\{\mathbf{0}\}.
\end{equation}
In other words, we need to consider only rays $\mathcal R_{\theta}$ which correspond to  $\theta$ in the support of measure $\mu$. We always start this Walsh diffusion from $X(0) = x \in \mathcal I^{\mu}$, and define the function $\ell$ only on $\supp\mu$, and the coefficients $g$ and $\si$ only on the set~\eqref{eq:effective-state-space}. This distinction becomes  important in section 5 of the current paper. In sections 2-4, we just assume that $\mathcal I$ is the state space.
\label{rmk:effective-state-space}
\end{rmk}

\subsection{Existence and uniqueness of Walsh diffusions} 
Definitions~\ref{defn:Walsh-smgle} and~\ref{defn:Walsh-diffusion} are adapted from \cite{MinghanNew}, and extended to $\BR^d$.  We shall impose some assumptions.

\begin{defn} A function $\phi : \mathcal I \to \BR$ is called {\it locally bounded} if $\sup_{K}|\phi| < \infty$ for every measurable function $R : \mathbb S \to (0, \infty)$ such that 
$$
K := \{r\theta\mid 0 < r \le R(\theta)\}\cup\{\mathbf{0}\} \subseteq \mathcal I,
$$
or, equivalently, $R(\theta) < \ell(\theta)$ for $\theta \in \mathbb S$. 
\label{defn:locally-bounded}
\end{defn}

\begin{asmp} \label{asmp:1} The functions $g, \si, \si^{-1}$ are locally bounded on $\mathcal I$. 
\end{asmp}

Under Assumption 2.1, it was proved in \cite[Section 3]{MinghanNew} that there exists a weak version of the Walsh diffusion on $\mathcal I$, associated with the triple $(g, \si, \mu)$, up to the {\it explosion time}, i.e., the first passage time of $\pa\mathcal I$ in~\eqref{eq:boundary-of-I}. To simplify exposition, we make the following assumption. 

\begin{asmp} \label{asmp:2}
For every initial condition $X(0) = x \in \mathcal I$, there exists a weak version, unique in law, of the Walsh diffusion $X$ in $\mathcal I$, associated with $(g, \si, \mu)$. That is, the moment $\tau$ of hitting the boundary $\pa\mathcal I$ is a.s. infinite, i.e., $\mathbb P(\tau = \infty) = 1$. 
\end{asmp}

As mentioned earlier, a Walsh diffusion associated with $(g, \si, \mu)$ behaves on any ray $\mathcal R_{\theta}$ as a solution of a one-dimensional SDE with drift $g(\cdot, \theta)$ and diffusion $\si^2(\cdot, \theta)$, as long as it does not hit the origin. When this process hits the origin, it instantaneously chooses the new ray according to the spinning measure $\mu$ (or, more precisely, according to the formula~\eqref{eq:rays-mu}, which is apportioning the local time at the origin between the rays), independently of the past behavior. 

\smallskip

It was shown in \cite[Proposition 4.2]{MinghanNew} that under Assumptions \ref{asmp:1} and \ref{asmp:2}, this Walsh diffusion $X = (X(t),\, t \ge 0)$ is, indeed,  a Markov process. 
Denote its transition kernel by $P^t(x, \cdot)$, $\,t \ge 0\,$, $\, x \in \mathbb R^{d}\,$.  Define a family $\mathcal D_{\mathcal I}$ of measurable functions $f : \mathcal I \to \BR$ which satisfy  
the following (a)-(c): 

\smallskip

(a) For every $\theta \in \mathbb S$, the function $f(\cdot, \theta)$ is $C^2((0, \ell(\theta))$, and continuous at $r = 0$;

\smallskip

(b) For every $\theta \in \mathbb S$, the derivative $f'(0+, \theta)$ from~\eqref{eq:derivative-at-origin} is well-defined. Moreover, the function $\theta \mapsto f'(0+, \theta)$ is measurable and bounded on $\mathbb S$;

\smallskip

(c) There exists an $\eps > 0$ such that 
$\, 
\sup_
{\theta \in \mathbb S\\ r \le \eps}
|f''(r, \theta)| < \infty.$

\smallskip

This is a more restrictive class of functions than the one described in \cite[Definition 2.6]{MinghanNew}; however, it will suffice for our purposes. 

\medskip

From \cite[Theorem 2.12]{MinghanNew} (suitably adapted to the context of the sphere $\mathbb S$  instead of a unit circle), and Definition~\ref{defn:Walsh-diffusion}, we get the following version of the It\^o-Tanaka formula 
\begin{equation} \label{eq:IT}
\begin{split}
f(X(t)) =&\, f(X(0)) + \int_0^t\left[g(X(s))f'(X(s)) + \frac{1}{2}\si^{2}(X(s))f''(X(s))\right]\md s \\ & + \int_0^t\si(X(s))f'(X(s))\,\md W(s) + \left[\int_{\mathbb S}f'(0+, \theta)\,\mu(\md \theta)\right]\,\Lambda^{\norm{X}}(t) , \quad t \ge 0 
\end{split} 
\end{equation}
for  every function $f \in \mathcal D_{\mathcal I}$.  
Here, the one-dimensional Brownian motion $W$ is taken from~\eqref{eq:defn-Walsh-diffusion}. Using this version of the It\^o-Tanaka formula, one can prove the following statement. 

\begin{prop}
Under Assumptions 2.1 and 2.2, the generator $\CL$ of the Walsh diffusion associated with $(g, \si, \mu)$ is given by
\begin{equation}
\label{eq:explicit-generator}
\CL f(r, \theta) = g(r, \theta)f'(r, \theta) + \frac12\si^2(r, \theta)f''(r, \theta) , \quad x = (r, \theta) \in \mathbb R^{d} 
\end{equation}
for the following class of functions $f$ in 
\begin{equation}
\label{eq:class-of-functions}
\mathcal D_{\mathcal I, \mu} := \left\{f \in \mathcal D_{\mathcal I}\,\,\biggl\vert\,\,\int_{\mathbb S^d}f'(0+, \theta)\,\mu(\md \theta) = 0\right\}.
\end{equation}
\end{prop}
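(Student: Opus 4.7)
The plan is to read off the result directly from the It\^o--Tanaka formula~\eqref{eq:IT}, using the fact that the class $\mathcal D_{\mathcal I, \mu}$ is designed precisely so that the local-time term disappears. Concretely, fix $f \in \mathcal D_{\mathcal I, \mu}$, apply~\eqref{eq:IT}, and observe that by the defining condition~\eqref{eq:class-of-functions} the coefficient $\int_{\mathbb S} f'(0+,\theta)\,\mu(\md\theta)$ in front of $\La^{\norm{X}}(t)$ is zero. What remains is the decomposition
\begin{equation*}
f(X(t)) - f(X(0)) = \int_0^t \CL f(X(s))\,\md s + \int_0^t \si(X(s))f'(X(s))\,\md W(s),
\end{equation*}
with $\CL$ as in~\eqref{eq:explicit-generator}. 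It then suffices to show that the stochastic integral on the right is a local martingale and that $\CL f(x)$ coincides with the infinitesimal mean at every $x \in \mathcal I$.

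For the local martingale property I would use a localization argument based on Assumption~\ref{asmp:1} and the structural conditions (a)--(c) defining $\mathcal D_{\mathcal I}$. Define $\tau_n := \inf\{t \ge 0 \mid \norm{X(t)} \ge n \text{ or } \dist(X(t),\pa\mathcal I) \le 1/n\}$. On $[0, \tau_n]$ the trajectory stays in a compact-like subset of $\mathcal I$, so by local boundedness of $\si$ and of $f'$ (the latter follows from condition (b) at the origin and from $f(\cdot,\theta) \in C^2((0,\ell(\theta)))$ on each ray, combined with a standard localization in $r$), the integrand $\si(X(\cdot))f'(X(\cdot))$ is bounded up to $\tau_n$, and the stopped integral is a true martingale. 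Similarly $\CL f(X(\cdot))$ is integrable on $[0,\tau_n]$ by condition (c) near the origin and by local boundedness of $g, \si$ away from it. Assumption~\ref{asmp:2} then yields $\tau_n \to \infty$ a.s.

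Taking expectation under $\MP^x$ of the stopped identity gives
\begin{equation*}
\EE^x f(X(t \wedge \tau_n)) - f(x) = \EE^x\int_0^{t \wedge \tau_n} \CL f(X(s))\,\md s.
\end{equation*}
Dividing by $t$ and sending $t \downarrow 0$ (for fixed $n$ large enough that $x$ lies in the localization region), continuity of $X$, local boundedness of $\CL f$ near $x$, and dominated convergence yield $\lim_{t \downarrow 0} t^{-1}(P^t f(x) - f(x)) = \CL f(x)$. This identifies $\CL$, as given by~\eqref{eq:explicit-generator}, with the Markov generator restricted to $\mathcal D_{\mathcal I, \mu}$.

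The main obstacle is the behavior at $x = \mathbf 0$, where the non-trivial local time contribution lives: one has to make sure that the vanishing of $\int_{\mathbb S} f'(0+,\theta)\,\mu(\md\theta)$ really suffices to kill that contribution and that no additional boundary term sneaks in from the one-sided derivative at the origin. The formal It\^o--Tanaka statement~\eqref{eq:IT}, cited from \cite[Theorem 2.12]{MinghanNew}, already encodes this, so the argument reduces to the integrability bookkeeping described above.
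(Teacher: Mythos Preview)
Your proposal is correct and follows exactly the route the paper intends: the paper does not give a detailed proof but simply writes ``Using this version of the It\^o--Tanaka formula, one can prove the following statement,'' and your argument---applying~\eqref{eq:IT}, killing the local-time term via the defining condition of $\mathcal D_{\mathcal I,\mu}$, then localizing to handle the stochastic integral---is precisely the fleshed-out version of that one-line pointer. Your treatment in fact supplies more detail than the paper itself.
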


\begin{rmk} We can also consider the generator $\mathcal L$  in \eqref{eq:explicit-generator} with respect to Euclidean topology; then we need to take functions $f \in \mathcal D_{\mathcal I, \mu}$ on $\BR^d$ which are continuous in Euclidean topology.
\label{rmk:generator-domain-Euclidean}
\end{rmk}


\subsection{Digression into one-dimensional theory} \label{sec:2.4} 
The content of this section is taken from \cite[Section 5.5]{KaratzasShreve}. Consider the one-dimensional stochastic differential equation (SDE)
\begin{equation} 
\label{eq:SDE-1D}
\md Z(t) \, =\,   g(Z(t))\,\md t + \si(Z(t))\,\md W(t),\ t \ge 0,
\end{equation}
where the coefficients $g : \BR \to \BR$ and $\si : \BR \to (0, \infty)$ are locally bounded. 
From Engelbert-Schmidt theory, this type of SDE has a unique in law weak solution up to the explosion time, for every initial condition $Z(0) = z$. One powerful tool to study this type of SDE is the {\it scale function} 
$$
s(x) := \int_0^x\exp\left(-2\int_0^u\frac{g(z)}{\si^2(z)}\,\md z\right)\md u \, , \quad 
$$
for $\, x \in \mathbb R\,$. 
This scale function is strictly increasing on the whole real line, and so we can define its {\it inverse} $s^{-1} : (s(-\infty), s(\infty))  \to \BR$. 
If we apply the scale function to a solution $Z$ of~\eqref{eq:SDE-1D}, 
we remove the drift coefficient from~\eqref{eq:SDE-1D} and get a continuous local martingale $\tilde{Z} := s(Z) = (s(Z(t)),\, t \ge 0)$. More precisely, the process $\tilde{Z} = s(Z)$ is a solution of the following SDE 
\begin{equation}
\label{eq:scaling}
\md\tilde{Z}(t) = v(\tilde{Z}(t))\,\md W(t) , \quad t \ge 0 , 
\end{equation}
where we define the {\it speed function} 
$v(x) := \si(s^{-1}(x))s'(s^{-1}(x))$, $\, x \in \BR$. 

Next, recall the concept of a time-change to make a Brownian motion from this local martingale $\tilde{Z}$. For simplicity of notation, let us assume that  $g \equiv 0$, and hence $s(x) \equiv x$, and $v(x) \equiv \si(x)$, $x \in \mathbb R $. The time-change is defined as $ \mathcal T(t) := \int_0^t\si^2(Z(s))\,\md s , \quad t \ge 0 $. 
By definition of $\sigma$ this is a strictly increasing function, and one can find a one-dimensional Brownian motion $B = (B(t),\, t \ge 0)$ such that $Z(t) = B(\mathcal T(t))$. Thus, we make a linear Brownian motion from a solution of the one-dimensional SDE~\eqref{eq:SDE-1D} in two steps: (a) removal of drift coefficient by applying the scale function; (b) standardization of diffusion coefficient by applying the time-change.

\subsection{Scale functions and time-change for Walsh diffusions} Same techniques as described in section \ref{sec:2.4} can be used for Walsh diffusions, in principle. However, we need to adjust for dependency of drift and diffusion coefficients on the angular cooordinate $\theta \in \mathbb S$.

First, let us recall the theory of scale functions for Walsh diffusions, developed in \cite[section 3.3]{MinghanNew}. Take a Walsh diffusion on $\mathcal I$ associated with $(g, \si, \mu)$, which satisfies Assumptions 2.1 and 2.2. For $(r, \theta) \in \mathcal I$, define the {\it scale function:}
\begin{equation}
\label{eq:scale}
s(r, \theta) := \int_0^r\exp\left(-2\int_0^{u}\frac{g(z, \theta)}{\si^2(z, \theta)}\,\md z\right)\,\md u.
\end{equation}
Under Assumptions 2.1 and 2.2, the expression~\eqref{eq:scale} is well defined. Moreover, $s(\cdot, \theta)$ is strictly increasing for every $\theta \in \mathbb S$. Thus for every $\theta \in \mathbb S$ there exists an inverse function $s^{-1}(\cdot, \theta)$ such that 
\begin{equation}
\label{eq:inverse-scale}
s(s^{-1}(r, \theta), \theta) \equiv r,\ r \ge 0, \theta \in \mathbb S. 
\end{equation}
Then the function 
\begin{equation}
\label{eq:scale-mapping}
\mathcal P : \mathcal I \ni (r, \theta)   \mapsto (s(r, \theta), \theta) \in \tilde{\mathcal I} := \{(r, \theta)\mid 0 < r < s(\ell(\theta), \theta),\, \theta \in \mathbb S\}\cup\{\mathbf{0}\} 
\end{equation}
is a one-to-one mapping. 
The function ~\eqref{eq:scale-mapping} maps the Walsh diffusion on $\mathcal I$ associated with $(g, \si, \mu)$ into the Walsh diffusion associated with $(0, \tilde{\si}, \mu)$, where the new 
coefficient $\,\tilde{\si}\,$ is given by 
\begin{equation}
\label{eq:new-sigma}
\tilde{\si}(r, \theta) = s'(s^{-1}(r, \theta), \theta)\si(s^{-1}(r, \theta), \theta)\ \mbox{for}\ (r, \theta) \in \tilde{\mathcal I}  
\end{equation} 
analogous to the speed function. In other words, just like for the one-dimensional SDE in ~\eqref{eq:SDE-1D}, applying the scale function ~\eqref{eq:scale} to  the drifted  Walsh diffusion would remove the drift coefficient. 

Next, let us make a time-change, as in \cite[section 3.2]{MinghanNew}. Assume for notational convenience that the Walsh diffusion already had zero drift coefficient, that is, it is associated with the triple $(0, \si, \mu)$. Then $s(r, \theta) \equiv r$, and $\tilde{\si}(r, \theta) \equiv \si(r, \theta)$ for $(r, \theta) \in \mathcal I$. Define the time-change
$$
\mathcal  T(t) = \int_0^t\si^2(X(s))\,\md s, \quad t \ge 0.
$$
This is a strictly increasing function, and there exists a Walsh Brownian motion $B = (B(t),\, t \ge 0)$ with spinning measure $\mu$ such that $X(t) \equiv B( \mathcal  T(t))$, $t \ge 0$. 


\section{Dirichlet Forms Approach and Stationary Measures} \label{sec:DirichletC}

Another way to define a Walsh diffusion is using Dirichlet forms, via {\it one-point reflection}. This method was designed in \cite[Section 4]{ChenFukushima} to construct Walsh Brownian motion in \cite[Section 5]{ChenFukushima}. It is also developed in \cite[Section 7.5]{ChenBook} and used in \cite[Section 7.6, Example 3]{ChenBook} to construct general Walsh diffusions with finitely supported spinning measure $\mu$. With minor changes, it is applicable to general Walsh diffusions. We shall merely outline the construction here, referring the reader to \cite[Section 7.6, Example 3]{ChenBook} for all details.  A benefit of this method is that it gives us a stationary distribution. Note that this method allows us to define Walsh diffusions starting from $\mes\otimes\mu$-a.e. point $x \in \mathcal I$. Assume we have the same parameters $(\mu, g, \si)$ and the domain $\mathcal I$, as before.  

\subsection{Construction of a Walsh diffusion using Dirichlet forms} For $\mu$-a.a. $\theta \in \mathbb S$, define the process $\tilde{X}_{\theta}$ on $[0, \ell(\theta))$ which behaves as a solution of an SDE with drift coefficient $g(\cdot, \theta)$ and diffusion coefficient $\si^2(\cdot, \theta)$, absorbed at $x = 0$. 

\begin{asmp} For $\mu$-a.a. $\theta \in \mathbb S$, the process $\tilde{X}_{\theta}$ is conservative.
\end{asmp}

Under Assumption 2.1, this Assumption 3.1 is equivalent to the assumption that $\tilde{X}_{\theta}$ does not reach $\ell(\theta)$ in a finite time a.s. 
Under Assumption 3.1 define a process $\tilde{X}$ on $\mathcal I$, as follows: if $\tilde{X}(0) \ne \mathbf{0}$, and if $\theta := \arg(\tilde{X}(0))$, then $\tilde{X}(t) = \tilde{X}_{\theta}(t)\theta$ for $ 0 \le t < \inf \{s: \tilde{X}_{\theta}(s) \, =\, \mathbf{0} \}$. In words: the process $\tilde{X}$ stays on the same ray $\mathcal R_{\theta}$, and evolves there as a one-dimensional diffusion process with drift coefficient $g(\cdot, \theta)$ and diffusion coefficient $\si^2(\cdot, \theta)$, killed at the origin. For notational convenience, we identify the origin $\mathbf{0}$ with the cemetery state $\Delta$, as long as we talk about $\tilde{X}$. 

\smallskip

Fix a $\theta \in \mathbb S$. Let $\tilde{P}_{\theta} = (\tilde{P}^t_{\theta})_{t \ge 0}$ be the transition semigroup of the process $\tilde{X}_{\theta}$. From the theory of one-dimensional SDE, it is known that this semigroup is symmetric with respect to  the measure $\pi_{\theta}$ on $[0, \ell(\theta))$, given by 
\begin{equation}
\label{eq:pi-ray}
\pi_{\theta}(\md r) = \si^{-2}(r, \theta)\exp\left(2\int_0^r\frac{g(\rho, \theta)}{\si^2(\rho, \theta)}\,\md\rho\right)\,\md r.
\end{equation}
This means that the semigroup $\tilde{P}_{\theta}$ satisfies
\begin{equation}
\label{eq:symmetry}
\int_0^{\ell(\theta)}\tilde{P}_{\theta}^tf(r)\, g(r)\, \pi_{\theta}(\md r) = \int_0^{\ell(\theta)}\tilde{P}_{\theta}^tg(r)\, f(r)\, \pi_{\theta}(\md r)
\end{equation}
for all bounded measurable functions $f, g : [0, \ell(\theta)) \to \BR$ with compact support. This additional condition on $f$ and $g$ is introduced to make the integrals in~\eqref{eq:symmetry} finite. Indeed, the measure $\pi_{\theta}$ might be infinite, as for the case of Walsh Brownian motion, $g \equiv 0$ and $\si \equiv 1$.

\smallskip

Let $\tilde{P} = (\tilde{P}^t)_{t \ge 0}$ be the transition semigroup of the process $\tilde{X}$. Define $f_{\theta}(r) := f(r, \theta)$. Then 
\begin{equation}
\label{eq:restriction}
\tilde{P}^tf(r, \theta) = \tilde{P}^t_{\theta}f_{\theta}(r).
\end{equation}
Indeed, as mentioned above, the process $\tilde{X}$ always stays on the same ray $\mathcal R_{\theta}$ and behaves there as the process $\tilde{X}_{\theta}$. Integrating~\eqref{eq:symmetry} with respect to the measure $\mu$ and using~\eqref{eq:restriction}, we get: for any bounded measurable functions $f, g : \mathcal I \to \BR$ with bounded support, 
\begin{equation}
\label{eq:symmetry-new}
\begin{split}
\int_{\mathcal I}\tilde{P}^tf(x)\, g(x)\, \pi(\md x) &= \int_{\mathbb S} \int^{\ell(\theta)}_{0} \tilde{P}_{\theta}^{t} f_{\theta}(r) g(r, \theta) \pi_{\theta} ({\mathrm d} r) \mu ({\mathrm d} \theta)  \\
& =\int_{\mathbb S} \int^{\ell(\theta)}_{0} \tilde{P}_{\theta}^{t} g_{\theta}(r) f(r, \theta) \pi_{\theta} ({\mathrm d} r) \mu ({\mathrm d} \theta) = \int_{\mathcal I}\tilde{P}^tg(x)\, f(x)\, \pi(\md x),
\end{split}
\end{equation}
where the new measure $\pi$ on $\mathcal I$ is defined as
\begin{equation}
\label{eq:stationary-measure}
\pi(\md r, \md \theta) = \pi_{\theta}(\md r)\,\mu(\md \theta) = \si^{-2}(r, \theta)\exp\left(2\int_0^r\frac{g(\rho, \theta)}{\si^2(\rho, \theta)}\,\md\rho\right)\,\md r\,\mu(\md\theta),\ \pi(\{ \bm{0}\}) = 0.
\end{equation}
Therefore, $\tilde{X}$ is symmetric with respect to $\pi$. Let us translate the notation from \cite{ChenFukushima} into our notation. The state space is $E := \mathcal I$. The lifetime $\zeta$ of $\tilde{X}$ is the first hitting time of the origin: 
$$
\zeta := \inf\{t \ge 0\mid \tilde{X}(t) = \mathbf{0}\}.
$$
Indeed, the process $\tilde{X}$ can get killed only by reaching the origin. The reason for this is that for every $\theta \in \mathbb S$, the process $\tilde{X}_{\theta}$ is conservative, that is, it does not reach $\ell(\theta)$ in finite time a.s. The new point $a$ is the origin: $a = \mathbf{0}$. We denote Markov transition probabilities for the process $\tilde{X}$ by $\tilde{P}_x(A) = \mathbb P(A\mid \tilde{X}(0) = x)$ for $x \in \mathcal I$ and Borel subsets $A \subseteq \mathcal I$.   Applying the one-point reflection construction from \cite[section 7.5]{ChenBook}, we get a Walsh diffusion associated with the triple $(g, \si, \mu)$, as a Markov process with an infinitesimal generator~\eqref{eq:explicit-generator}. It is conservative (i.e., non-exploding), and its transition semigroup $(P^t)_{t \ge 0}$ satisfies $P^t1 = 1$, $t \ge 0$. 

\smallskip

Under Assumption 2.1, let us now present Assumption 2.2 in a slightly weakened form.

\begin{asmp} For $\mes\otimes\mu$-a.e. starting point $x \in \mathcal I$, the Walsh diffusion on $\mathcal I$ associated with $(g, \si, \mu)$ is conservative. 
\end{asmp}

\begin{lemma} Under Assumption 2.1, Assumption 3.1 is equivalent to Assumption 3.2.
\end{lemma}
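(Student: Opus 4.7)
The plan is to prove both implications under Assumption~2.1, with the harder direction being $3.1 \Rightarrow 3.2$, where conservativeness of each on-ray absorbed diffusion must be transferred to the global Walsh diffusion.

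For the direction $3.1 \Rightarrow 3.2$, I would apply the one-point reflection / Dirichlet form construction recalled earlier in this subsection (following \cite{ChenFukushima} and \cite[Section 7.5]{ChenBook}). Under Assumptions~2.1 and~3.1, the construction produces a Markov process $X^\ast$ on $\mathcal I$ which is conservative ($P^t 1 = 1$ for all $t \ge 0$), symmetric with respect to the measure $\pi$ of~\eqref{eq:stationary-measure}, and whose infinitesimal generator coincides with $\mathcal L$ from~\eqref{eq:explicit-generator} on the domain $\mathcal D_{\mathcal I,\mu}$ of~\eqref{eq:class-of-functions}. The key additional fact, inherited from the Chen-Fukushima machinery, is that $X^\ast$ satisfies both the Skorohod decomposition on $\|X^\ast\|$ and the partition-of-local-time identity~\eqref{eq:rays-mu}, so $X^\ast$ is a Walsh diffusion associated with $(g,\sigma,\mu)$ in the sense of Definition~\ref{defn:Walsh-diffusion}. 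Uniqueness in law of the Walsh diffusion up to explosion, established in \cite{MinghanNew} under Assumption~2.1, then identifies $X^\ast$ with the Walsh diffusion of Definition~\ref{defn:Walsh-diffusion} started from $\mes\otimes\mu$-a.e.\ $x \in \mathcal I$, and since $X^\ast$ is non-exploding, Assumption~3.2 follows.

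For the direction $3.2 \Rightarrow 3.1$, I would argue by contrapositive: assume Assumption~3.1 fails, so the set $B := \{\theta \in \mathbb S : \tilde X_\theta \text{ reaches } \ell(\theta) \text{ in finite time with positive probability}\}$ satisfies $\mu(B)>0$. For each fixed $\theta \in B$, a standard scale-function argument on $(0,\ell(\theta))$ applied to the one-dimensional SDE with coefficients $g(\cdot,\theta),\sigma(\cdot,\theta)$ (as in Section~\ref{sec:2.4}) shows that for every $r \in (0,\ell(\theta))$, the process $\tilde X_\theta$ started from $r$ reaches $\ell(\theta)$ before $0$ with strictly positive probability. By Definition~\ref{defn:Walsh-diffusion}, the Walsh diffusion $X$ started at $x = r\theta$ with $\theta \in B$ evolves on the ray $\mathcal R_\theta$ as a solution of that SDE until its first hit of the origin, so on this initial interval $X$ has the same law as $\tilde X_\theta$ stopped at $0$. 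Hence with positive probability $X$ exits through $\ell(\theta)\theta \in \pa\mathcal I$ before returning to $\mathbf 0$. The set $\{r\theta : \theta \in B,\, 0<r<\ell(\theta)\}$ has positive $\mes\otimes\mu$-measure, contradicting Assumption~3.2.

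The main obstacle lies in the first direction: one must unambiguously identify the process $X^\ast$ produced by the Dirichlet form machinery with the Walsh diffusion of Definition~\ref{defn:Walsh-diffusion}, i.e., extract both the Skorohod decomposition and the partition-of-local-time property~\eqref{eq:rays-mu} from the general one-point reflection framework applied to an arbitrary (not necessarily finitely supported) spinning measure $\mu$. The finitely-supported case is done in \cite[Section 7.6, Example 3]{ChenBook}, and the extension is the technical core. A minor secondary issue is $\mu$-measurability of $B$, which follows from measurable dependence of the transition law of $\tilde X_\theta$ on $\theta$, itself implicit in Assumption~2.1.
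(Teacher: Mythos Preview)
Your proposal is correct and follows essentially the same route as the paper: the direction $3.1 \Rightarrow 3.2$ is obtained directly from the conservativeness of the one-point reflection (Chen--Fukushima) construction, and $3.2 \Rightarrow 3.1$ is the same contrapositive argument via a $\mu$-positive set of ``bad'' angles. The identification issue you flag is real but the paper simply takes it as part of the construction in Section~3.1; otherwise the two arguments coincide.
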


\begin{proof} Under Assumptions 2.1 and 3.1, non-explosiveness of the Walsh diffusion follows from the construction above. Conversely, suppose Assumption 3.1 were not satisfied for some $\theta \in A$, where $A \subseteq \mathbb S$ is a Borel subset with $\mu(A) > 0$. Start the Walsh diffusion associated with $(g, \si, \mu)$ from any point $x = (r, \theta) \in (0, \infty)\times A$. Then with positive probability it does not exit the ray $\mathcal R_{\theta}$, and hence with positive probability it would reach the point $\ell(\theta)\theta \in \pa\mathcal I$. Thus, this Walsh diffusion would not be conservative for this starting point, although the set $(0, \infty)\times A$ of such points has positive $\mes\otimes\mu$-measure, which contradicts Assumption 3.2. Thus, these assumptions are equivalent. 
\end{proof}

\subsection{Stationary distributions and measures} Take an $\BR^d$-valued continuous-time Markov process  $X = (X(t),\, t \ge 0),$ with transition kernel $P^t(x, \cdot)$. We shall distinguish two measures below. 

\begin{defn}[Stationary distribution] \label{defn:stationary}
We say that a Borel probability measure $\pi$ on $\BR^d$ is a {\it stationary distribution} for $X$ if the process $\,X\,$ starting from the initial distribution $\pi$, forever remains at the same distribution $\pi$ (we write $X(t) \sim \pi$), that is,  $X(t) \sim \pi$ for every $t \ge 0$, if $X(0) \sim \pi$; Or equivalently for every bounded measurable function $f : \BR^d \to \BR$, and every $t > 0$
\begin{equation}
\label{eq:stationary-def}
(\pi, P^tf) = (\pi, f), 
\end{equation}
where we use the notation $\, (\mu, f) \, =\, \int_{\mathbb R^{d}} f(x) \mu ( {\mathrm d} x) \,$ of $\,f\,$ with respect to the measure $\, \mu\,$. 
\end{defn}

\begin{defn}[Stationary measure] 
A $\si$-finite Borel measure $\pi$ on $\BR^d$ with $\pi(K) < \infty$ for bounded subset $K$ is called a {\it stationary measure} for the Markov process $X$ if the equality~\eqref{eq:stationary-def} holds for every bounded measurable function $f : \BR^d \to \BR$ with bounded support. 
\end{defn}


Now take $X = (X(t),\, t \ge 0)$ to be the Walsh diffusion on $\mathcal I$ associated with $(g, \si, \mu)$. By construction in section \ref{sec:DirichletC}, the new process $X$ is symmetric with respect to the measure $\pi$ from~\eqref{eq:stationary-measure}. That is, if $(P^t)_{t \ge 0}$ is its transition semigroup, then for any bounded measurable function $f, g : \mathcal I \to \BR$ with bounded support, and any $t > 0$, 
\begin{equation}
\label{eq:symmetry-Walsh}
\int_{\mathcal I}(P^tf)(x)\,g(x)\,\pi(\md x) = \int_{\mathcal I}f(x)\, (P^tg)(x)\,\pi(\md x). 
\end{equation}
Since $X$ is conservative and it does not explode, i.e., $P^t1 = 1$, letting $g \equiv 1$ in~\eqref{eq:symmetry-Walsh}, we obtain  

\begin{thm}
\label{thm:Walsh-stat}
The measure $\pi$ from~\eqref{eq:stationary-measure} is a stationary measure for the Walsh diffusion on $\mathcal I$ associated with the triple $(g, \si, \mu)$. 
\end{thm}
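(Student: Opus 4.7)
The plan is to derive stationarity of $\pi$ directly from the symmetry identity~\eqref{eq:symmetry-Walsh} already established via the one-point reflection/Dirichlet form construction, combined with conservativeness of $X$ (Assumption~3.2), i.e.\ $P^t 1\equiv 1$. Morally this is the one-line calculation indicated in the text: substitute $g\equiv 1$ into~\eqref{eq:symmetry-Walsh} to read off $(\pi,P^tf)=(\pi,f)$. Before doing so I would verify the hypotheses in Definition~3.2: by Assumption~\ref{asmp:1}, $g$, $\si$, and $\si^{-1}$ are locally bounded, so the radial density $r\mapsto \si^{-2}(r,\theta)\exp\!\bigl(2\int_0^r g(\rho,\theta)/\si^{2}(\rho,\theta)\,\md\rho\bigr)$ is bounded in $r$ on each set $\{r\le R(\theta)\}$ with $R(\theta)<\ell(\theta)$. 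Since $\mu$ is a probability measure on $\mathbb S$, it follows that $\pi$ is finite on tree-bounded subsets of $\mathcal I$ and hence $\si$-finite, matching the requirements of Definition~3.2.

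The only real step is then a cutoff argument, since the constant function $g\equiv 1$ does not have bounded support and so is not directly permissible in~\eqref{eq:symmetry-Walsh}. Pick an increasing sequence of bounded Borel subsets $K_n\uparrow \mathcal I$ and set $g_n := 1_{K_n}$; each $g_n$ has bounded support, so~\eqref{eq:symmetry-Walsh} gives
\[
\int_{\mathcal I}(P^tf)(x)\,g_n(x)\,\pi(\md x) = \int_{\mathcal I}f(x)\,(P^tg_n)(x)\,\pi(\md x)
\]
for every bounded measurable $f : \mathcal I\to\BR$ with bounded support. On the right-hand side, $0\le P^tg_n\le P^t1 = 1$ and $P^tg_n\uparrow 1$ pointwise by dominated convergence applied to the probability kernel $P^t(x,\cdot)$; since $|f|$ is bounded with bounded support, hence $\pi$-integrable, dominated convergence in $\pi$ forces the right-hand side to converge to $(\pi, f)$. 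On the left-hand side, decompose $f = f^+ - f^-$, so that $P^tf^\pm\ge 0$; then $g_n\uparrow 1$ together with monotone convergence yields $(\pi,(P^tf^\pm)g_n)\uparrow (\pi,P^tf^\pm)$. Both limits are a priori finite because they equal $(\pi, f^\pm)$, and matching limits produces $(\pi,P^tf) = (\pi,f)$, which is precisely~\eqref{eq:stationary-def} of Definition~3.2.

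The only real obstacle here is of a bookkeeping nature: $P^tf$ may have unbounded support even when $f$ does, so one cannot just invoke dominated convergence symmetrically on both sides of~\eqref{eq:symmetry-Walsh}. The symmetry identity is precisely what unlocks the argument, by transferring the cutoff $g_n$ to the side where the test function ($f$, with bounded support) provides the dominating integrand; on the other side, the combination of monotonicity $g_n\uparrow 1$ with nonnegativity of $P^tf^\pm$ supplies monotone convergence for free. No further hypotheses beyond Assumptions~\ref{asmp:1}-3.2 (used to construct the symmetric diffusion and guarantee $P^t1=1$) are needed.
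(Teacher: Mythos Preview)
Your proposal is correct and follows exactly the route the paper indicates: the paper's entire argument is the single clause ``letting $g\equiv 1$ in~\eqref{eq:symmetry-Walsh}'' together with $P^t1=1$, and you have supplied the cutoff/limit justification (via $g_n=1_{K_n}$, dominated convergence on the right and monotone convergence on the left applied to $f^\pm$) that the paper suppresses. There is no substantive difference in strategy.
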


The following discussion clarifies a ``physical sense'' of stationary measure $\pi$ with $\pi(\mathcal I) < \infty$. We may normalize $\pi$ to be a stationary distribution. For $\mu$-almost all $\theta \in \mathbb S$, $\pi_{\theta}([0, \ell(\theta))) < \infty$. Now, fix a $\theta \in \mathbb S$. Take a reflected one-dimensional diffusion on $[0, \ell(\theta))$ with drift coefficient $g(\cdot, \theta)$ and diffusion coefficient $\si^2(\cdot, \theta)$, reflected at $0$, which, by Assumption 3.1, never hits $\ell(\theta)$. Then this process has a unique stationary distribution 
\begin{equation}
\label{eq:1D-stationary}
\pi_{\theta}(\md r) = C^{-1}(\theta) \si^{-2}(r, \theta)\exp\left(2\int_0^r\frac{g(\rho, \theta)}{\si^2(\rho, \theta)}\md \rho\right) {\mathrm d} r 
 =: C^{-1}(\theta)p(r, \theta)\md r, \quad (r, \theta) \in \mathcal I, 
\end{equation}
with the normalizing constant 
$ C(\theta) := \int_0^{\ell(\theta)}p(r, \theta)\,\md r$.
The stationary distribution can then be 
\begin{equation}
\label{eq:interpret}
\pi(\md r, \md\theta) = C(\theta)\,\pi_{\theta}(\md r)\,\mu(\md\theta), \quad (r, \theta) \in \mathcal I . 
\end{equation}

\begin{rmk} An informal interpretation of~\eqref{eq:interpret} is as follows. The stationary distribution of the given Walsh diffusion is a combination of all radial one-dimensional stationary distributions of the radial processes, weighted by the spinning measure $\mu$ (governing how often at the origin the process chooses a given direction), and by $C(\theta)$, the average excursion time for the ray $\mathcal R_{\theta}$.

\smallskip

To simplify, let us 
assume the support 
$\supp\mu = \{\theta_1, \ldots, \theta_p\}$ is finite. Then~\eqref{eq:interpret} becomes 
\begin{equation}
\label{eq:interpret-finite-support}
\pi(\md r, \{i\}) = C_i\, \mu_i \, \pi_i(\md r),\ \ i = 1, \ldots, p,\ \ r \in [0, \ell_i].
\end{equation}
Here, $C_i := C(\theta_i),\, \pi_i := \pi_{\theta_i},\, \ell_i := \ell(\theta_i)$ and $\mu_{i} := \mu (\theta_{i})$ for $i =1, \ldots , p$. Let us interpret the stationary distribution as a long-term average occupation time. 
Then for $A \subseteq [0, R_i]$ 
$$
\pi(A\times\{i\}) = \lim\limits_{T \to \infty}\frac 1T\int_0^T1\left(\arg X(s) = \theta_i,\, \norm{X(s)} \in A\right)\,\md s , \quad i = 1, \ldots , p, 
$$
thanks to the average occupation times formula. In particular, letting $A = [0, \ell_i]$, we get 
$$
C_i\mu_i = \pi([0, \ell_i]\times\{i\}) = \lim\limits_{T \to \infty}\frac1T\mes\{s \in [0, T]\mid \arg X(s) = \theta_i\}.
$$
Here, $\mu_i$ is the long-term proportion of the times this Walsh diffusion chooses the ray $\mathcal \ell_i := \mathcal \ell_{\theta_i}$, and $C_i$ is the factor corresponding to the average time spent on this ray $\mathcal \ell_i$, for each such excursion. 
\end{rmk}

\begin{exm} \label{ex:1} Let $\mathcal I := \BR^d$, $g(r, \theta) \equiv g(\theta) < 0$, $\si(r, \theta) \equiv \si(\theta) > 0$ for every $\, r > 0 \,$, $\, \theta \in \mathbb S\,$. Denote $\la(\theta) := -2g(\theta)\si^{-2}(\theta) > 0 $ for $\, \theta \in \mathbb S\,$. Then direct calculation gives us the scale function $\, s( \cdot)\,$ and the stationary measure $\, \pi (\cdot)\,$ 
$$
s(r, \theta) = \frac1{\la(\theta)}\left(e^{\la(\theta)r} - 1\right),\quad 
\pi(\md r, \md\theta) = \si^{-2}(\theta)\exp\left(-\la(\theta)r\right)\,\md r\,\mu(\md \theta) , \quad r > 0, \, \, \theta \in \mathbb S. 
$$
\end{exm}

\section{Continuous Dependence on Spinning Measure and Initial Condition} 

\subsection{Main results} 
In this section, using Euclidean distance $\norm{\cdot}_T$ from~\eqref{eq:max-norm} for $C([0, T], \BR^d)$, we show that the law of the Walsh diffusion on $\mathcal I$ associated with $(g, \si, \mu)$, starting from $X(0) = x$, continuously (in the weak sense) depends on the measure $\mu$, and on the initial condition $x$. 

\begin{asmp} \label{asmp:3}
The functions $g, \si : \mathcal I \to \BR$ are continuous in the Euclidean topology.  Moreover, the function $\ell : \mathbb S \to (0, \infty]$ is lower semicontinuous, that is, for every $\theta \in \mathbb S$, 
$$
\varliminf\limits_{\theta \to \theta_0}\ell(\theta) \ge \ell(\theta_0) . 
$$
\end{asmp}

\begin{rmk} The assumption on $\ell$ can be equivalently stated as follows: the function $\ell$ can be pointwise approximated by an increasing sequence $(R_n: \mathbb S \to (0, \infty))_{n \ge 1}$ of continuous functions 
$$
R_n(\theta) \uparrow \ell(\theta)\ \mbox{as}\ n \to \infty\ \mbox{for every}\ \theta \in \mathbb S.
$$
In this case, define
\begin{equation}
\label{eq:D-m}
D_m := \{(r, \theta)\mid 0 < r \le R_m(\theta),\, \theta \in \mathbb S\}\cup\{\mathbf{0}\},\ m \ge 1. 
\end{equation}
Then $(D_m)_{m \ge 1}$ is an increasing sequence of compact (in a Euclidean topology) subsets of $\mathcal I$ with each interior
$
\Int D_m :=  \{(r, \theta)\mid 0 < r < R_m(\theta),\, \theta \in \mathbb S\}\cup\{\mathbf{0}\} \,.
$
By construction, this sequence satisfies the properties
$$
\Int D_m \subseteq \Int D_{m+1},\ \ \mbox{and}\ \ \bigcup\limits_{m=1}^{\infty}\Int(D_m) = \mathcal I. 
$$
Assumption~\ref{asmp:3} holds, for example, when $\ell(\cdot) \equiv \infty$, or when $\ell : \mathbb S \to (0, \infty)$ is  continuous. Assumption \ref{asmp:3} does not hold, for example, 
$\, \ell(\theta) :=  1 + {\bf 1}_{\{\theta = \theta_{0}\}} \,$ for $d \ge 2$ and a fixed point $\theta_0 \in \mathbb S$. 
\label{rmk:particular-ell} 
\end{rmk}

\begin{rmk} Assumption \ref{asmp:1} actually follows from Assumption \ref{asmp:3}. Indeed, take a function $R$ as in Definition~\ref{defn:locally-bounded}, and approximate the function $\ell$ by $R_{n}$, $n \ge 1$ as in Remark \ref{rmk:particular-ell}. Then we have  
$$
R(\theta) < \ell(\theta) = \lim\limits_{n \to \infty}R_n(\theta),\ \ \forall\ \ \theta \in \mathbb S.
$$
Thus the open sets $O_n := \{\theta \in \mathbb S\mid R(\theta) < R_n(\theta)\}$, $n \ge 1$ form an open cover of $\mathbb S$. By compactness, we can extract a finite subcover $O_{n_1}, \ldots, O_{n_j}$ of $\mathbb S$. Then there exists an $m = \max(n_1, \ldots, n_j)$ such that for all $\theta \in \mathbb S$ we have $R(\theta) < R_m(\theta)$. Therefore, $K \subseteq D_m$, where $K $ is defined in Definition \ref{defn:locally-bounded}, and $D_m$ is taken from~\eqref{eq:D-m}. Since $g$ is continuous and $D_m$ is compact in Euclidean topology, we conclude $g$ is bounded on $D_m$, and so is on $K$. The same argument applies to $\si$ and $\si^{-1}$. 
\end{rmk}


\begin{thm} Let us consider the  Walsh diffusion $X_n$ on $\mathcal I$ associated with $(g, \si, \mu_n)$, starting from $X_n(0) = x_n$, $n \ge 0$. For every $n = 0, 1, 2, \ldots$, take a point $x_n \in \mathcal I$ in ~\eqref{eq:I} and a spinning measure $\mu_n$ on $\mathbb S$. Suppose that the functions $g$ and $\si$, and the domain $\mathcal I$, satisfy Assumption \ref{asmp:3}. Suppose also that every Walsh diffusion $X_n$ satisfies Assumption \ref{asmp:2}. If $\mu_n \Ra \mu_0$, and $x_n \to x_0$, then $X_n \Ra X_0$ in $C([0, T], \BR^d)$. 
\label{thm:continuous-dependence-on-spinning-measure}
\end{thm}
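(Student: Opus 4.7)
The plan is to prove the theorem in three stages: tightness of $(X_n)_{n\ge 1}$ in $C([0,T],\BR^d)$, identification of every subsequential weak limit as a Walsh diffusion with data $(g,\sigma,\mu_0)$ starting from $x_0$, and invocation of the uniqueness in law from Assumption~\ref{asmp:2} to conclude $X_n \Ra X_0$. For tightness, I would first reduce Euclidean tightness of $(X_n)$ to scalar tightness of $(\|X_n\|)$: on any subinterval of $[0,T]$ where $\|X_n\|$ does not vanish, $X_n$ stays on a single ray so $\|X_n(t)-X_n(s)\| = |\,\|X_n(t)\|-\|X_n(s)\|\,|$, while on a subinterval containing a zero of $\|X_n\|$ the triangle inequality gives $\|X_n(t)-X_n(s)\|\le 2\,\omega(\|X_n\|,t-s,[0,T])$, so $\omega(X_n,\delta,[0,T])\le 2\,\omega(\|X_n\|,\delta,[0,T])$. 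Scalar tightness then follows from the Skorohod decomposition $\|X_n\|=U_n+\Lambda^{\|X_n\|}$ with $\Lambda^{\|X_n\|}(t)=\max_{s\le t}(U_n(s))_-$ from Definition~\ref{defn:smgle-rays}, together with the compact exhaustion $(D_m)_{m\ge 1}$ of $\mathcal{I}$ in Remark~\ref{rmk:particular-ell}: uniform boundedness of $g,\sigma$ on each $D_m$ (Assumption~\ref{asmp:3}) yields Kolmogorov moment estimates on $U_n$ stopped at $\tau_n^{(m)}:=\inf\{t:X_n(t)\notin\Int(D_m)\}$, and non-explosion (Assumption~\ref{asmp:2}) combined with $x_n\to x_0$ gives $\lim_{m\to\infty}\sup_n \MP(\tau_n^{(m)}\le T) = 0$, removing the localization.

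For identification, use Skorohod representation to assume $X_n\to X^*$ a.s.\ in $C([0,T],\BR^d)$ along a weakly convergent subsequence. Kurtz--Protter stability of stochastic integrals, combined with the continuity of $g,\sigma$, passes to the limit in the Skorohod decomposition, yielding $U_n\to U^*(t)=\|X^*(0)\|+\int_0^t g(X^*(s))\,\md s+\int_0^t\sigma(X^*(s))\,\md W^*(s)$ in $C[0,T]$ and therefore $\Lambda^{\|X_n\|}\to\max_{s\le\cdot}(U^*(s))_-=\Lambda^{\|X^*\|}$ by continuous mapping. Non-degeneracy $\sigma>0$ gives $\mes\{t:X^*(t)=\mathbf{0}\}=0$, so $X^*$ is a semimartingale on rays driven by $U^*$. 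To identify the spinning measure of $X^*$ as $\mu_0$, apply the It\^o--Tanaka identity~\eqref{eq:IT} to $X_n$ with test functions $f\in\mathcal{D}_{\mathcal{I}}$ that are continuous in the Euclidean topology (Remark~\ref{rmk:generator-domain-Euclidean}) and have $f'(0+,\cdot)$ continuous on $\mathbb{S}$:
\[
M_n^f(t):=f(X_n(t))-f(X_n(0))-\int_0^t\CL f(X_n(s))\,\md s-c_n(f)\,\Lambda^{\|X_n\|}(t)
\]
is a local martingale with $c_n(f):=\int_{\mathbb{S}}f'(0+,\theta)\,\mu_n(\md\theta)\to c_0(f)$ by weak convergence of $\mu_n$ and continuity of the integrand. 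Passing to the limit exhibits the analogous process $M^f_*$ for $X^*$ with coefficient $c_0(f)$ as a local martingale; restricting to $f\in\mathcal{D}_{\mathcal{I},\mu_0}$ so that $c_0(f)=0$ shows that $X^*$ solves the martingale problem for $(g,\sigma,\mu_0)$ with $X^*(0)=x_0$, and Assumption~\ref{asmp:2} yields $X^*\stackrel{d}{=} X_0$.

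The hardest step is passing to the limit in the local time $\Lambda^{\|X_n\|}$, since local time is not a continuous functional of the sample path in general. The explicit formula $\Lambda^{\|X_n\|}(t)=\max_{s\le t}(U_n(s))_-$ from Definition~\ref{defn:smgle-rays} converts this into continuity in the semimartingale $U_n$, which is delivered by Kurtz--Protter stability under the continuity of $g,\sigma$ from Assumption~\ref{asmp:3}. A subsidiary technical point is that the normalization defining $\mathcal{D}_{\mathcal{I},\mu_0}$ is tied to the target measure $\mu_0$: one fixes $f$ along the sequence and lets $c_n(f)$ drift with $n$, using weak convergence $\mu_n\Ra\mu_0$ and continuity of $f'(0+,\cdot)$ on $\mathbb{S}$ to force $c_n(f)\to 0$ and recover the clean martingale problem for $X^*$.
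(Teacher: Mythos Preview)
Your route---tightness plus martingale-problem identification---is genuinely different from the paper's. The paper never identifies an abstract subsequential limit: it first proves Corollary~\ref{cor:Walsh-BM-continuous-dependence} for Walsh Brownian motion by an explicit excursion coupling (Theorem~\ref{thm:main-convergence}), then writes each driftless $X_n$ as a time-change $X_n(t)=W_n(T_n(t))$ of a Walsh Brownian motion $W_n$ with spinning measure $\mu_n$, proves convergence of the stopped processes via Arzel\`a--Ascoli on the time-changes (Lemma~\ref{lemma:weak-limit-points}), delocalizes using non-explosion of the \emph{limit} $X_0$, and finally removes drift by the scale map~\eqref{eq:scale-mapping}. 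The limit is thus \emph{constructed} as a Walsh diffusion, so no identification is needed.

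Your proposal has two real gaps.

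\textbf{Delocalization.} The claim $\lim_{m\to\infty}\sup_n\MP(\tau_n^{(m)}\le T)=0$ does not follow from ``non-explosion combined with $x_n\to x_0$''. Assumption~\ref{asmp:2} gives $\lim_m\MP(\tau_n^{(m)}\le T)=0$ for each fixed $n$, but the spinning measures $\mu_n$ vary with $n$, and there is no comparison process dominating $\|X_n\|$ uniformly in $n$: the drift $g(r,\theta)$ depends on $\theta$, and the angular law changes with $\mu_n$. As written this step is circular---uniform localization is essentially the tightness you want. The paper's Step~3 in Section~4.4 handles this by first proving convergence of the \emph{stopped} processes $X_n^{(m)}$, then using $\MP(X_0\in\Int D_m\ \forall t\le T)>1-\eta$ (non-explosion of the limit, not of the approximants) together with Portmanteau to delocalize.

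\textbf{Identification.} You show $X^*$ solves the martingale problem for $\CL$ on Euclidean-continuous $f\in\mathcal{D}_{\mathcal{I},\mu_0}$ with $f'(0+,\cdot)\in C(\mathbb{S})$, then invoke Assumption~\ref{asmp:2}. But Assumption~\ref{asmp:2} asserts uniqueness in law among Walsh diffusions in the sense of Definitions~\ref{defn:smgle-rays}--\ref{defn:Walsh-diffusion}, not well-posedness of that martingale problem. To apply it you must verify that $X^*$ is a Walsh semimartingale with spinning measure $\mu_0$, i.e.\ that the partition-of-local-time identity~\eqref{eq:rays-mu} holds for \emph{every Borel} $A\subseteq\mathbb{S}$. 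Your test functions are forced to be Euclidean-continuous (otherwise $f(X_n)\not\to f(X^*)$), so the functions $\chi_A$ of~\eqref{eq:f-A} and anything with discontinuous $f'(0+,\cdot)$ are excluded; the class you can pass to the limit determines $\mu_0$ as a measure on $C(\mathbb{S})^*$ but does not directly deliver~\eqref{eq:rays-mu} for Borel $A$. Bridging this (e.g.\ by monotone-class arguments from closed $A$ with $\mu_0(\partial A)=0$) is additional, nontrivial work that the paper's time-change construction sidesteps entirely.
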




Let us state separately this convergence result for Walsh Brownian motions in $\BR^d$. For this case, $g \equiv 0$, $\si \equiv 1$, and $\ell \equiv \infty$, and Theorem~\ref{thm:continuous-dependence-on-spinning-measure} takes the following form. 


\begin{cor} \label{cor:Walsh-BM-continuous-dependence}
For every $n = 0, 1, 2, \ldots$, take a Walsh Brownian motion $W_n$ with spinning measure $\mu_n$, starting from $W_n(0) = x_n$. If $x_n \to x_0$, and $\mu_n \Ra \mu_0$, then $W_n \Ra W_0$ in $C([0, T], \BR^d)$.
\end{cor}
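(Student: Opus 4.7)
The plan is simply to derive the corollary as an immediate specialization of Theorem~\ref{thm:continuous-dependence-on-spinning-measure} to the choice $g \equiv 0$, $\sigma \equiv 1$, $\ell \equiv \infty$ (so that $\mathcal I = \mathbb R^d$ and $\partial\mathcal I = \varnothing$). Thus the task reduces to verifying, for this special case, the two hypotheses that Theorem~\ref{thm:continuous-dependence-on-spinning-measure} imposes: Assumption~\ref{asmp:3} on $(g,\sigma,\ell)$, and Assumption~\ref{asmp:2} on each $W_n$.

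Verifying Assumption~\ref{asmp:3} is immediate: the constant functions $g\equiv 0$ and $\sigma\equiv 1$ are continuous on $\mathbb R^d$ in the Euclidean topology, and the constant function $\ell\equiv +\infty$ is trivially lower semicontinuous (in fact continuous in the extended sense) on $\mathbb S$. Verifying Assumption~\ref{asmp:2} for each $n\ge 0$ is equally immediate: because $\ell\equiv +\infty$, the boundary $\partial\mathcal I$ defined in~\eqref{eq:boundary-of-I} is empty, so the explosion time $\tau$ is a.s.\ infinite by vacuity. Existence and uniqueness in law of a Walsh Brownian motion with arbitrary spinning measure $\mu_n$ started at an arbitrary $x_n\in\mathbb R^d$ are supplied by the construction of~\cite{Walsh78, IchibaNew} already cited in Section~2 and by~\cite[Theorem 2.1]{IchibaNew}; thus all the hypotheses of Theorem~\ref{thm:continuous-dependence-on-spinning-measure} are in place.

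With these checks done, Theorem~\ref{thm:continuous-dependence-on-spinning-measure} applied to $X_n := W_n$ yields $W_n\Rightarrow W_0$ in $C([0,T],\mathbb R^d)$ equipped with the sup-norm $\norm{\cdot}_T$ from~\eqref{eq:max-norm}, which is exactly the desired conclusion. Since the corollary is a pure specialization, there is no genuine obstacle to overcome; the only thing a reader might pause on is the non-explosion clause, but as just noted this is automatic when $\mathcal I=\mathbb R^d$, so the verification really is a one-line reduction.
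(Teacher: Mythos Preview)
Your reduction is formally valid but circular within this paper. Although Corollary~\ref{cor:Walsh-BM-continuous-dependence} is stated after Theorem~\ref{thm:continuous-dependence-on-spinning-measure}, the paper proves them in the opposite order: Section~4.4, Step~2 of the proof of Theorem~\ref{thm:continuous-dependence-on-spinning-measure} explicitly invokes Corollary~\ref{cor:Walsh-BM-continuous-dependence} (``As we have already proved (in the proof of Corollary~\ref{cor:Walsh-BM-continuous-dependence}), we have $W_n \Ra W_0$\ldots'') in order to handle the time-changed Walsh Brownian motions that arise from the driftless Walsh diffusions. So specializing Theorem~\ref{thm:continuous-dependence-on-spinning-measure} to $g\equiv 0$, $\sigma\equiv 1$ assumes what you are trying to prove.

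The paper's independent argument for Corollary~\ref{cor:Walsh-BM-continuous-dependence} proceeds in two stages. First, for $x_n=\mathbf{0}$ for all $n$, the quantitative Wasserstein bound of Theorem~\ref{thm:main-convergence} (proved by an explicit excursion-level coupling of two Walsh Brownian motions over a common reflected Brownian motion) gives $W_n\Ra W_0$ directly. Second, for general $x_n\to x_0$, one builds each $W_n$ by running a linear Brownian motion along the initial ray until it hits the origin at time $\tau_n=\mathfrak m^{-1}(\norm{x_n})$, then concatenating with a copy $\tilde W_n$ started at $\mathbf{0}$; Skorohod representation, a.s.\ continuity of $\mathfrak m^{-1}$ at fixed levels, and the piecing Lemma~\ref{lemma:piecewise-functions} then yield uniform convergence. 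Your proposal bypasses all of this work by leaning on a theorem whose own proof needs it, so as written it does not stand on its own.
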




%
Consider the case when all these Walsh Brownian motions in $\BR^d$ start from the origin: $x_n = \mathbf{0}$, $n = 0, 1, 2, \ldots$. Then we can actually quantify the rate of convergence in the following Theorem~\ref{thm:main-convergence}. For two Borel probability measures $\nu_1$ and $\nu_2$ on a metric space $(\mathcal X, {\mathbf d})$ with metric $\mathbf d$, the {\it Wasserstein distance of order} $p \ge 1$ is defined as follows:
\begin{equation}
\label{eq:Wasserstein-defn}
W_p(\nu_1, \nu_2) := \inf_{\gamma \in \Gamma}\left[\, \int_{\mathcal X \times \mathcal X}  \, \big (\, {\mathbf d}(x_1, x_2)  \, \big)^p \, \md \gamma (x_{1}, x_{2})  \, \right]^{1/p},
\end{equation}
where the infimum is taken over the family $\,\Gamma\,$ of probability measures on $ \mathcal X \times \mathcal X$ with marginals $\nu_1$ and $\nu_2$ for which the integral inside the bracket is finite. It is known from \cite{Rachev91, Villani08} that convergence in the Wasserstein distance implies the weak convergence. Thus we shall estimate the Wasserstein distance between the distributions of $X_n$ and $X_0$, as random elements of $C([0, T], \BR^d)$, using the Wasserstein distance between $\mu_n$ and $\mu_0$ on $\mathbb S$. 

Denote by $\mathbb Q_T(\mu)$ the law of the Walsh Brownian motion starting from the origin with spinning measure $\mu$ in the space $C([0, T], \BR^d)$. Theorem \ref{Thm2.1} provides 
the upper estimate of the Wasserstein distance between $\mathbb Q_T(\mu)$ and $\mathbb Q_T(\ol{\mu})$, for two spinning measures $\mu$ and $\ol{\mu}$ on $\mathbb S$. 

\begin{thm} \label{Thm2.1} Take two Borel probability measures $\mu$ and $\ol{\mu}$ on $\mathbb S$. 
%
%
For all positive constants $p, q, \rho, T$ with  
\begin{equation} \label{1pqr}
1 \le q < p,\ \ \rho \in \Big( 0 , \frac{p}{p+1}\Big),
\end{equation}
there exists a positive constant $C^{\ast}$ dependent on $\,T, p, q, \rho\,$, such that 
\begin{equation}
\label{eq:main-ineq}
W_q\left(\mathbb Q_T(\mu), \mathbb Q_T(\ol{\mu})\right) \le C^{\ast}\left[W_p(\mu, \ol{\mu})\right]^{\rho}.
\end{equation}

\medskip


\label{thm:main-convergence}
\end{thm}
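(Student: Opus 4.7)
The plan is to construct an explicit coupling of the two Walsh Brownian motions on a common probability space and then control the $L^q$ norm of the sup-distance between their paths by an excursion-theoretic tail estimate. Let $\gamma^*$ be an optimal coupling attaining $W_p(\mu,\ol\mu)$ on $\mathbb S\times\mathbb S$. Using the explicit construction from the introduction, drive both processes by a single one-dimensional reflected Brownian motion $S$ starting at $0$; for each excursion $e$ of $S$ away from $\mathbf 0$, sample a pair $(\theta_e,\ol\theta_e)$ from $\gamma^*$, independently across excursions and of $S$. Setting $X(t):=\theta_{e(t)}S(t)$ and $\ol X(t):=\ol\theta_{e(t)}S(t)$ on excursion $e(t)$ yields $X\sim\MQ_T(\mu)$ and $\ol X\sim\MQ_T(\ol\mu)$, and since this is a valid coupling
\begin{equation*}
W_q(\MQ_T(\mu),\MQ_T(\ol\mu))\le\bigl(\EE\, D^q\bigr)^{1/q},\qquad D:=\max_{t\in[0,T]}\norm{X(t)-\ol X(t)}.
\end{equation*}

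On any excursion $e$ the pointwise identity $\norm{X(t)-\ol X(t)}=\norm{\theta_e-\ol\theta_e}\,S(t)$ immediately yields $D\le \sup_{e\subseteq[0,T]}\norm{\theta_e-\ol\theta_e}H_e$, where $H_e:=\sup_{s\in e}S(s)$. Since $\norm{\theta-\ol\theta}\le 2$ on $\mathbb S$, only excursions with $H_e>\delta/2$ can contribute to the event $\{D>\delta\}$. Conditioning on the path $S$ and applying a union bound together with Markov's inequality at order $p$ to the iid pairs $(\theta_e,\ol\theta_e)$, one obtains
\begin{equation*}
\MP(D>\delta\mid S)\le\sum_{e:H_e>\delta/2}\MP_{\gamma^*}\!\Bigl(\norm{\theta-\ol\theta}>\tfrac{\delta}{H_e}\Bigr)\le W_p(\mu,\ol\mu)^p\,\delta^{-p}\sum_{e:H_e>\delta/2}H_e^p.
\end{equation*}

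The main obstacle is the deterministic moment bound
\begin{equation*}
\EE\sum_{e:H_e>\eps}H_e^p\le \frac{C(T,p)}{\eps},\qquad \eps>0,
\end{equation*}
which needs care because under It\^o's measure the untruncated sum has infinite expectation. By It\^o excursion theory the excursions of $S$ form a Poisson point process with intensity $dL_t\otimes n(de)$, where $L$ is local time at the origin and the It\^o measure satisfies $n(H>h)=c_0/h$; hence the number $N_\eps(T)$ of excursions of height exceeding $\eps$ initiated before time $T$ is, conditionally on $L_T$, Poisson with mean $c_0 L_T/\eps$, while every such excursion has height at most $M_T:=\sup_{[0,T]}S$. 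Cauchy-Schwarz together with the fact that $L_T$ and $M_T$ possess all polynomial moments (both being subgaussian, with $L_T\stackrel{d}{=}M_T$ by L\'evy's identity) gives $\EE[M_T^pN_\eps(T)]\le C(T,p)/\eps$, which dominates the sum and proves the claim. Setting $\eps=\delta/2$ converts the conditional estimate into the unconditional tail bound $\MP(D>\delta)\le C(T,p)\,W_p(\mu,\ol\mu)^p\,\delta^{-(p+1)}$.

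To finish, integrate the tail against $q\delta^{q-1}$ and split at the balance threshold $\delta_0:=(C(T,p)W_p^p)^{1/(p+1)}$, using the trivial bound $\MP\le 1$ on $[0,\delta_0]$ and the tail bound above it; the integral on $[\delta_0,\infty)$ converges thanks to $q<p+1$ (which is implied by $q<p$), and the two contributions are of the same order, yielding $\EE D^q\le C(T,p,q)\,W_p(\mu,\ol\mu)^{pq/(p+1)}$ and hence $W_q(\MQ_T(\mu),\MQ_T(\ol\mu))\le C(T,p,q)\,W_p(\mu,\ol\mu)^{p/(p+1)}$. This already matches the boundary exponent; the strict condition $\rho<p/(p+1)$ in the hypothesis provides a margin that absorbs the various Gaussian-moment prefactors cleanly into a single constant $C^{\ast}$ depending only on $T,p,q,\rho$.
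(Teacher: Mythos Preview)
Your argument is essentially correct and follows a genuinely different route from the paper's. Both proofs start from the same excursion-by-excursion coupling driven by a common reflected Brownian motion $S$, but diverge after that. The paper bounds $\EE[D^q]$ directly: it splits excursions at a height threshold $\delta$, applies H\"older with exponents $p/(p-q)$ and $p/q$, controls the maximum over the finitely many high excursions by a sum, and then introduces a second parameter $D$ (a cutoff for the local time $L_T$) to bound $\EE[\lvert\mathcal J_{T,\delta}\rvert]$ via the Poisson structure up to inverse local time $L^{-1}(D\sqrt T)$, followed by a two-parameter optimisation in $(\delta,D)$. You instead derive a pointwise tail bound $\MP(D>\delta)\le C\,W_p^p\,\delta^{-(p+1)}$ from a union bound plus Markov at order $p$, and then integrate the tail. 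This is cleaner --- it avoids the auxiliary level $D$ and the two-step minimisation --- and it actually reaches the endpoint exponent $\rho=p/(p+1)$, whereas the paper's optimisation only produces $\rho<p/(p+1)$ and then downgrades via boundedness of $W_p$. Your last sentence about ``margin'' is therefore superfluous: once you have $W_q(\mathbb Q_T(\mu),\mathbb Q_T(\ol\mu))\le C\,W_p^{p/(p+1)}$, the stated inequality for any $\rho<p/(p+1)$ follows trivially from $W_p\le 2$.

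There is one genuine technical gap. The claim that ``conditionally on $L_T$, $N_\eps(T)$ is Poisson with mean $c_0L_T/\eps$'' is not correct: $L_T$ is a functional of the excursion durations, which are correlated with the heights under It\^o's measure (tall excursions are long), so conditioning on $L_T$ distorts the height distribution. What you actually need, and what is true, is the moment bound $\EE[N_\eps(T)^2]\le C(T)/\eps^2$. This follows because $L_T$ is a stopping time for the excursion filtration indexed by local time: writing $N^{(L)}_\eps(\ell)$ for the rate-$(c_0/\eps)$ Poisson process counting high excursions up to local time $\ell$, the compensated process $M(\ell):=N^{(L)}_\eps(\ell)-(c_0/\eps)\ell$ is a martingale with $\langle M\rangle_\ell=(c_0/\eps)\ell$, so optional stopping gives $\EE[M(L_T)^2]=(c_0/\eps)\EE[L_T]$ and hence $\EE[N_\eps(T)^2]\le 2(c_0/\eps)^2\EE[L_T^2]+2(c_0/\eps)\EE[L_T]\le C(T)/\eps^2$. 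Cauchy--Schwarz then yields $\EE[M_T^pN_\eps(T)]\le C(T,p)/\eps$ exactly as you want. This is the same obstacle the paper confronts; it resolves it by the local-time cutoff $D$ rather than the stopping-time martingale argument, which is why both proofs are valid but structurally different.
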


The rest of this section is organized as follows. 
We shall prove Theorem~\ref{thm:main-convergence} in section 4.2, Corollary~\ref{cor:Walsh-BM-continuous-dependence} in section 4.3 and then complete the proof of Theorem~\ref{thm:continuous-dependence-on-spinning-measure} in section 4.4.  



\subsection{Proof of Theorem~\ref{thm:main-convergence}} 

\noindent {\it Step 1: Efficient coupling.} Let us consider the following Walsh Brownian motions $X = (X(t), t \ge 0)$ and $\ol{X} = (\ol{X}(t), t \ge 0)$ with spinning measures $\mu$ and $\ol{\mu}$, respectively, coupled by the following procedure. 

\smallskip 

On a filtered probability space, let us take a reflected Brownian motion $S = (S(t), t \ge 0)$ with values in $\,[0, \infty)\,$, starting from zero, instantaneously reflected at zero. 
Let $e = (e(t), t \ge 0)$ be the excursion process, so that $e(t)$ is the excursion of the reflected Brownian motion $S$ at time $t$, and denote by $\mathcal J$ the (countable) set of its distinct elements. This set depends on $\omega \in \Omega$. The notation is taken from \cite[Chapter 12]{RevuzYorBook}. Take a certain coupling $\Theta$ of marginal probability measures $\mu$ and $\ol{\mu}$. Generate a sequence $(\theta_j, \ol{\theta}_j) $ of $\mathbb S \times \mathbb S\,$-valued, independently, identically distributed random variables jointly distributed in $\Theta$, indexed by $j \in \mathcal J$. Define the Walsh Brownian motions $X$ and $\ol{X}$ as follows: for  each $t \ge 0$, if $S(t) = 0$, then $X(t) \, :=\,  0 \, =:\, \ol{X}(t) $, and if $S(t) > 0$, then there exists a unique index $j \in \mathcal J$, such that $e(t) = j$, and we let 
\begin{equation} \label{eq: XX}
X(t) := \theta_j\, S(t),\quad \ \ \ol{X}(t) := \ol{\theta}_j\, S(t) 
\end{equation}
on an extended, filtered probability space. Such construction of Walsh Brownian motions (and semimartingales) 
is recently examined and described in \cite{IchibaNew}. 

Note that we use the common reflected Brownian motion $S$ in the construction, and so the resulting Walsh Brownian motions $X$ and $\ol{X}$ have the same time intervals for excursions, i.e., $\, 
\{t : X(t) \, =\, {\bf 0} \} \, =\,  \{t : \overline{X}(t) \, =\,  {\bf 0}\} \,$ a.s.
This procedure creates two Walsh Brownian motions $X$ and $\ol{X}$ with spinning measures $\mu$ and $\ol{\mu}$, respectively. In other words, the probability measure induced by this pair $(X, \ol{X})$ is a coupling $\, \Pi (\Theta)\,$ of marginal probability distributions $\mathbb Q_T(\mu)$ and $ \mathbb Q_T(\ol{\mu})$ in $\,C([0, T], \mathbb R^{d})\,$, where $\, \mu\,$ and $\, \overline{\mu}\,$ are the marginal of the coupling $\, \Theta\,$. To achieve an efficient coupling $\,\Pi (\Theta)\,$ of $X$ and $\ol{X}$, in the sense of smaller Wasserstein distance, we take an efficient coupling $\, \Theta\, $ 
of $\mu$ and $\ol{\mu}$. 

Let us denote by $\, \Gamma_{0}\,$ all couplings of marginal probability measures $\, \mu\,$ and $\, \overline{\mu}\,$. Also, let us denote by $\,\Gamma_{1}\,$ the family of probability measures, each of which is induced by the coupling $\, (X, \overline{X})\,$ of distributions $\, \mathbb Q_{T}(\mu)\,$ and $\, \mathbb Q_{T}( \overline{\mu})\,$, constructed in the above procedure from a coupling $\,\Theta\,$ of $\, \mu\,$ and $\, \overline{\mu} \,$ in $\, \Gamma_{0}\,$, i.e., $\,\Gamma_{1} \, :=\, \{\Pi (\Theta) : \Theta \in \Gamma_{0}\}\,$. Then by definition (\ref{eq:Wasserstein-defn}) for $\,p, q \ge 1\,$ here we shall evaluate the Wasserstein distances  
\begin{equation} \label{eq: W2}
\begin{split}
W_p(\mu, \ol{\mu}) \, &:=\,  \big( \inf_{\Theta \in \Gamma_{0}} \mathbb E^{\Theta}[ \,  \lVert \theta - \overline{\theta} \rVert^{p} \, ] \, \big)^{1/p} \, , \\
W_q\left(\mathbb Q_T(\mu), \mathbb Q_T(\ol{\mu})\right) \, &\le\, \big( \inf_{P \in \Gamma_{1}} \mathbb E^{P}[ \, \lVert \, X - \overline{X} \,  \rVert_{T}^{q} \, ] \, \big)^{1/q} \, =\, \big( \inf_{\Theta \in \Gamma_{0}} \mathbb E^{\Pi(\Theta)}[ \, \lVert \, X - \overline{X} \,  \rVert_{T}^{q} \, ] \, \big)^{1/q} \, , 
\end{split} 
\end{equation}
where $\, \mathbb E^{\Theta}\,$ and $\, \mathbb E^{P} \, =\, \mathbb E^{\Pi(\Theta)}\,$ are expectations under $\,\Theta \in \Gamma_{0}\,$, $\, P \, =\,  \Pi(\Theta) \in \Gamma_{1}\,$, respectively. To this end, given the constants $\, p, q, T\,$ in (\ref{1pqr}), we shall 
estimate the upper bound of 
\begin{equation}\label{eqEXX0}
\mathbb E[ \lVert X - \overline{X}\rVert_{T}^{q}] \, =\,  \mathbb E^{\Pi(\Theta)} [ \max\limits_{0 \le t \le T}\norm{X(t) - \ol{X}(t)}^q ] \,, 
\end{equation}
in terms of $ \mathbb E^{\Theta} [ \norm{\theta - \ol{\theta}}^p] $, for the coupling $\, \Pi (\Theta) \,$ of the pair $\, X\,$ and $\, \overline{X}\,$ of Walsh Brownian motions, and for the coupling $\,\Theta \,$ of the spinning measures $\, \mu\,$ and $\, \overline{\mu}\,$, described in (\ref{eq: XX}).  

\medskip 

The idea of the proof is as follows. Take a constant $\de (> 0)$. There are two kinds of excursions of $S$: the first kind consists of excursions with height greater than $\de$, and the second kind consists of excursions with height less than or equal to $\de$. 
There are at most finite number of excursions of the first kind. 
For them, we have 
\begin{equation} \label{XtS}
\norm{X(t) - \ol{X}(t)} \le \norm{\theta_j - \ol{\theta}_j}\, S(t), \quad t \ge 0 \, , 
\end{equation}
where we can estimate the running maximum of $S$ from above. Take a time moment $t$ corresponding to the second kind of excursions. By  the triangle inequality and the construction in (\ref{eq: XX}), we have 
\begin{equation} \label{X2d}
\norm{X(t) - \ol{X}(t)} \le \norm{X(t)} + \norm{\ol{X}(t)} = S(t) + S(t) \le 2\, \de.
\end{equation}
With this idea of separating excursions in two kinds, we estimate the upper bound of (\ref{eqEXX0}), and then minimize it by choosing $\,\delta\,$. 

\medskip 

\noindent {\it Step 2: Excursions.} Define the set of excursions of $S$ 
$$
\mathcal J_T := \{j \in \mathcal J\mid \exists t \in [0, T]:\ e(t) = j\} 
$$
restricted to the time interval $[0, T]$,  including the last excursion (which is sometimes called a {\it Brownian meander}). This last excursion can start at a time moment $t' \le T$, but end at $t'' > T$; this excursion is included if $\max_{t \in [t', T]}S(t) \ge \de$. Let us classify it into two kinds of excursions   
\begin{equation}
\mathcal J_{T, \de} := \{j \in \mathcal J_T\mid H(j) > \de\},\quad \mathcal J_{T, \de}^{c} := \mathcal J_{T} \setminus \mathcal J_{T, \delta} \, , 
\label{eq:excursion-height}
\end{equation} 
where $H(j)$ is the height of the excursion $j \in \mathcal J$. Since $\mathcal J_{T, \de}$ is a finite set, by (\ref{X2d})-(\ref{XtS}), 
$$
\max\limits_{0 \le t \le T}\norm{X(t) - \ol{X}(t)}^q  \le \max\limits_{{\{t: e(t) \in \mathcal J_{T, \delta}\}}} \norm{X(t) - \ol{X}(t)}^q + \max\limits_{{\{t: e(t) \in \mathcal J_{T, \delta}^{c}\}}}\norm{X(t) - \ol{X}(t)}^q
$$
$$
\le \big( \max\limits_{0 \le t \le T}S(t) \big)^q  \cdot \max\limits_{j \in \mathcal J_{T, \de}}\norm{\theta_j - \ol{\theta}_j}^q  \, {} + (2\de)^q .
$$

If the set $\mathcal J_{T, \de}$ is empty, we let the maximum of zero numbers to be zero. Taking the expected values and applying H\"older's inequality  to the product in the right hand side with
$$
r_1 := \frac{p}{p-q},\ \ r_2 := \frac pq,\ \ \frac1{r_1} + \frac1{r_2} = 1\,, 
$$
we obtain the first upper bound of (\ref{eqEXX0})
\begin{equation}
\label{eq:X-X}
\mathbb E^{\Pi(\Theta)} \big[ \max\limits_{0 \le t \le T}\norm{X(t) - \ol{X}(t)}^q \big] \le (2\de)^q + C_1 \cdot \left[\mathbb E \big[ \max\limits_{j \in \mathcal J_{T, \de}}\norm{\theta_j - \ol{\theta}_j}^p\big] \right]^{q/p} \, , 
\end{equation}
where with the Gamma function $\,\Gamma(a) \, =\, \int^{\infty}_{0} x^{a-1} e^{-x} {\mathrm d} x \,$, $\, a  > 0\,$, 
\begin{equation} \label{C1}
C_1 := \left[\mathbb E \Big [ \big( \max\limits_{0 \le t \le T}S(t)\big) ^{qr_1} \Big] \right]^{1/r_1} \, =\,  \frac{\, (2T)^{\, q\, /\, 2}\, }{\, \pi^{1/(2r_{1})}\, } \cdot \Big [\Gamma \Big( \frac{\, 1 + q \, r_{1}\, }{2}\Big)\Big] ^{1\, /\, r_{1}} \,  .
\end{equation}
Here we used the density function of the running maximum of $S$ to compute $\,C_{1}\,$ (see \cite{KaratzasShreve}). 

For the second term in (\ref{eq:X-X}) it is not easy to estimate the number of elements in $\mathcal J_{T, \de}$ directly. Instead, we can estimate it indirectly by the number of elements in a set which is (usually) larger than $\mathcal J_{T, \de}$. Take a number $D (> 0)$ large enough, to be determined later. Define 
$L = (L(t), t \ge 0)$ to be the local time process of the reflected Brownian motion $S$ at zero, i.e.,  $\, L(\cdot) :=  \int^{\cdot}_{0} {\bf 1}_{\{S(t) \, =\, 0\}} {\mathrm d} S(t) \, $. Define also $L^{-1}(s) := \inf\{t \ge 0\mid L(t) = s\}$ to be the inverse local time of $S$. Then the probability $\mathbb P(L(T) > D\sqrt{T})$ is very small. And if $L(T) \le D\sqrt{T}$, then 
\begin{equation}
\label{eq:inclusion}
\mathcal J_{T, \de} \subseteq \mathcal J_{L^{-1}(D\sqrt{T}), \de},
\end{equation}
It follows from \cite[Chapter 12]{RevuzYorBook} that the number $\, \lvert \mathcal J_{L^{-1}(D\sqrt{T}), \de} \rvert\,$ of elements in the set $\mathcal J_{L^{-1}(D\sqrt{T}), \de}$ has Poisson distribution with parameter $\,\la := {D\sqrt{T}}\, / \, {\de}\,$, i.e., 
\begin{equation}
\label{eq:mean}
\mathbb E \big[ |\mathcal J_{L^{-1}(D\sqrt{T}), \de}| \big] = 
\frac{D\sqrt{T}}{\de}.
\end{equation}
Consider two cases $\, \{L(T) \le D \sqrt{T}\}\,$ and $\,\{L(T) > D \sqrt{T}\}\,$. By~\eqref{eq:inclusion}, 
\begin{equation}
\label{eq:max-to-sum}
\begin{split}
\mathbb E \big[ \max\limits_{j \in \mathcal J_{T, \de}}\norm{\theta_j - \ol{\theta}_j}^p \big] \, =\, \mathbb E \big[ \max\limits_{j \in \mathcal J_{T, \de}}\norm{\theta_j - \ol{\theta}_j}^p \big( {\bf 1}_{\{L(T) \le D \sqrt{T}\} } + {\bf 1}_{\{ L(T) > D \sqrt{T}\}}\big) \big]
\\
\le \mathbb E\big[ \max_{j \in \mathcal J_{L^{-1}(D\sqrt{T}), \de}} \norm{\theta_j - \ol{\theta}_j}^p\big]  + (2\pi)^p \cdot \mathbb P(L(T) > D\sqrt{T}) \, . 
\end{split}
\end{equation}
Since by the construction of the pair $\,(X, \overline{X})\,$ in (\ref{eq: XX}) the random variables $\, \theta_{j}\,$ and $\, \overline{\theta}_{j}\,$ are independent of $\,S\,$, it follows from Wald's identity and (\ref{eq:mean})  that 
\begin{equation} 
\label{eq:Wald}
 \mathbb E \big[ \max_{j \in \mathcal J_{L^{-1}(D\sqrt{T}), \de}}\norm{\theta_j - \ol{\theta}_j}^p \big] \le  \mathbb E \Big[ \!\!\! \SL_{j \in \mathcal J_{L^{-1}(D\sqrt{T}), \de}}\!\!\! \norm{\theta_j - \ol{\theta}_j}^p \Big] = \mathbb E \big[ |\mathcal J_{L^{-1}(D\sqrt{T}) , \de}| \big] \cdot \mathbb E^{\Theta}\big[ \norm{\theta - \ol{\theta}}^p\big] \, .
\end{equation}
The classical L\'evy theorem states that $L(T)$ has the same distribution as $S(T)$. Then by the Mills ratio of the Gaussian tail probability \cite[Chapter 7]{FellerBook}, for every $\,r \ge 1\,$ there exists a constant $\,C_{2}\,$ (which does not depend on $\,D\,$), such that 
\begin{equation}
\label{eq:estimate}
\mathbb P(L(T) > D\sqrt{T}) = \mathbb P(S(1) > D) 
\le \frac{2}{\sqrt{2\pi}D}e^{-D^2/2} \le \frac{\,C_{2}\,}{\,D^{r}\,}
\,; \quad \,D > 0\, . 
\end{equation}
Combining (\ref{eq:mean})-(\ref{eq:estimate}) together, we have 
\begin{equation}
\mathbb E \big[ \max\limits_{j \in \mathcal J_{T, \de}}\norm{\theta_j - \ol{\theta}_j}^p \big] \le 
\frac{\, D\sqrt{T}\, }{\de} \,\cdot  \, \mathbb E^{\Theta} \big[ \norm{\theta - \ol{\theta}}^p \big] + \frac{C_{3}}{D^{r}} \, 
\end{equation}
for every $\, D > 0\,$, $\, \delta > 0\,$ where $\,C_{3} \, =\, (2\pi)^{p}C_{2}\,$. Thus the right hand of (\ref{eq:X-X}) is evaluated as 
\begin{equation} \label{eq: new-aggregated}
\mathbb E^{\Pi(\Theta)} \big[ \max\limits_{0 \le t \le T}\norm{X(t) - \ol{X}(t)}^q \big] \le  C_{4} \cdot \Big[ \de^q +  \left(\frac{\, D\, }{\de}\, \cdot \, \mathbb E^{\Theta}\big[ \norm{\theta - \ol{\theta}}^p \big]+ \frac{\,1\,}{\,D^{r}\,}\right) ^{q/p} \Big],
\end{equation}
where $\,C_{4}\,$ does not depend on $\,\delta (> 0), D (> 0) \,$ but on $\,r (\ge 1)\,$, $\, T (> 0)\,$, $\,1 \le q < p \,$.  


\bigskip 

\noindent {\it Step 3: Minimization.} 
Since the left-hand side of (\ref{eq: new-aggregated}) does not depend on $\,(D, \delta)\,$, let us minimize the right-hand side of (\ref{eq: new-aggregated}) with respect to $\,D\,$ and $\,\delta\,$ by applying twice an inequality 
\begin{equation} \label{eq: harmonic-mean}
f_{0}(x) \, :=\, a_{1} \, x^{c_{1}} + a_{2} \, x^{-c_{2}} \, \ge\,  \Big[ \Big( \frac{\,c_{2}\,}{\,c_{1}\,}\Big)^{\frac{c_{1}}{c_{1}\, +\, c_{2}}} + \Big( \frac{\,c_{1}\,}{\,c_{2}\,}\Big)^{\frac{c_{2}}{c_{1}\,+\, c_{2}}} \Big] \cdot a_{1}^{\frac{c_{2}}{c_{1}\, +\, c_{2}}} \cdot a_{2}^{\frac{c_{1}}{c_{1}\, +\, c_{2}}} \, =\, f_{0}(x^{\ast})
\end{equation}
for every $\, x > 0\,$, where $\,a_{i}, c_{i}\,$, $\,i \, =\, 1, 2\,$ are fixed positive constants and $\,x^{\ast} \, :=\, (\frac{a_{2} \, c_{2}}{a_{1}\,c_{1}})^{1/(c_{1}+c_{2})}\,$ is a unique minimizer of the function $\,f_{0}(\cdot)\,$. Applying (\ref{eq: harmonic-mean}) with $\,(x, a_{1}, a_{2}, c_{1}, c_{2}) = (D, \mathbb E^{\Theta}\big[ \norm{\theta - \ol{\theta}}^p \big]/\delta, 1, 1, r) \,$ and $\,D^{\ast} \, :=\, (\delta r / \mathbb E^{\Theta}\big[ \norm{\theta - \ol{\theta}}^p \big] )^{1/(1+r)} \,$, 
we obtain 
\[
[f_{0}(D)]^{q/p} \, =\, \Big( \frac{\, D\, }{\de}\, \cdot \, \mathbb E^{\Theta}\big[ \norm{\theta - \ol{\theta}}^p \big]+ \frac{\,1\,}{\,D^{r}\,}\Big)^{q/p} \ge C_{5} \cdot \Big( \frac{\,\mathbb E^{\Theta}\big[ \norm{\theta - \ol{\theta}}^p \big] \,}{\,\delta\,} \Big)^{qr/(p(1+r))} \, =\, [f_{0}(D^{\ast})]^{q/p}\, , 
\]
and then applying (\ref{eq: harmonic-mean}) with $\, (x, a_{1}, a_{2}, c_{1}, c_{2}) \, =\, (\delta, 1, C_{5} (\mathbb E^{\Theta}\big[ \norm{\theta - \ol{\theta}}^p \big])^{qr/(p+pr)}, q, qr / ( p ( 1+ r))) \,$, we obtain 
\begin{equation} \label{eq: C6}
C_{4} \cdot \Big( \delta^{q} + C_{5} \cdot \Big( \frac{\,\mathbb E^{\Theta}\big[ \norm{\theta - \ol{\theta}}^p \big] \,}{\,\delta\,} \Big)^{q r\, /\, (p(1+r))}  \Big) \ge C_{6} \big ( \mathbb E^{\Theta}\big[ \norm{\theta - \ol{\theta}}^p \big]  \big)^{\rho \, q/p} \, 
\end{equation}
with some constants $\,C_{i}\,$, $\,i \, =\, 5, 6\,$, where 
\begin{equation} \label{eq: rho}
\rho \, :=\, \frac{p r }{ ( 1 + p ) r + p} 
\, \in \Big[ \frac{\,p\,}{\,2p+1\,}\, , \frac{\,p\,}{\,1+p\,}\Big) \, , \quad \text{ with } \quad r \, =\, \frac{\,p \rho \,}{\, p - (1 + p) \rho \,} \, . 
\end{equation}

Now for every $\, \rho \in [p/(2p+1), p/(1+p))\,$ given, we may choose  the corresponding constants $\, r (\ge 1)\,$ from (\ref{eq: rho}), $\,C_{2}\,$ in (\ref{eq:estimate}) and then the resulting constant $\,C^{\ast} \, :=\, C_{6}\,$ in (\ref{eq: C6}) to  obtain 
\begin{equation} \label{eq:EXXq}
\big [ \mathbb E^{\Pi(\Theta)} \big[ \max\limits_{0 \le t \le T}\norm{X(t) - \ol{X}(t)}^q \big] \big]^{1/q} \, \le\,  C^{\ast} \big ( \mathbb E^{\Theta}\big [ \norm{\theta - \ol{\theta}}^p\big] \big)^{\rho/p}\, . 
\end{equation}
Taking the infimum on both sides of (\ref{eq:EXXq}) over the family  $\,\Gamma_{0}\,$ of measures for the coupling $\, \Theta\,$ on $\,\mathbb S\times \mathbb S\,$, we achieve the desired upper bound~\eqref{eq:main-ineq} for $\, \rho \in [p/(2p+1), p/(1+p))\,$. For every $\, \rho_{1} \in (0, p/(2p+1))\,$ and $\, \rho_{2} \in [p/(2p+1)), p/(1+p)) \,$, because of boundedness of $\, 0 \le W_{p}(\mu, \overline{\mu}) \le \sup_{\theta,  \overline{\theta} \in \mathbb S } \lVert \theta - \overline{\theta} \rVert \, =\, 2\,$, there exists a constant $\,C_{7}\,$ such that $\, [W_{p}(\mu, \overline{\mu})]^{\rho_{2}} \le C_{7}[W_{p}(\mu, \overline{\mu})]^{\rho_{1}} \,$. Using this equality and modifying the constant $\,C^{\ast}\,$ for $\, \rho_{2}\,$, we obtain the desired upper bound ~\eqref{eq:main-ineq} also for $\, \rho \in (0, p/(2p+1))\,$. Therefore, we conclude the proof of ~\eqref{eq:main-ineq} for every $\, \rho \in (0, p/(1+p))\,$.

\hfill $\,\square\,$

\subsection{Proof of Corollary \ref{cor:Walsh-BM-continuous-dependence}} For the case $x_n = \mathbf{0}$ for all $n$, this follows immediately: Convergence in the Wasserstein distance of order $p$ is equivalent to weak convergence plus uniform boundedness of the $p$-th moment, see \cite{Villani08}. For measures $(\mu_n)_{n \ge 0}$ on $\mathbb S$, their $p$-th moments are trivially uniformly bounded, since $\mathbb S$ is a bounded set. Theorem \ref{Thm2.1} makes the rest of the proof trivial.  

Consider the general case of Corollary~\ref{cor:Walsh-BM-continuous-dependence}, with arbitrary initial conditions. By the  Skorohod representation theorem, we can create a probability space with copies $\tilde{W}_n$ of Walsh Brownian motions starting from the origin, such that $ \widetilde{W}_n \to \widetilde{W}_0$ a.s. uniformly on every $[0, T]$. For $n \ge 0$, let $\theta_n := \arg(x_n)$. Then we can couple $W_n$ for $n = 0, 1, 2, \ldots$ as follows. 

Let us take a standard Brownian motion $B = (B(t),\, t \ge 0)$ on the real line with $\, B(0) \, =\, 0\,$. Since $x_n \to x_0$ as $n \to \infty$ in Euclidean topology, for $n$ large enough (and therefore without loss of generality for all $n$) we have: $x_n \ne \mathbf{0}$.
Since $x_n \to x_0$, we have 
$\, \norm{x_n} \to \norm{x_0},\ \ \theta_n \to \theta_0\, $.

Now, define stopping times $\tau_n := \inf\{t \ge 0\mid B(t) = \norm{x_n}\}$. We construct copies of Walsh Brownian motions $W_n$ starting from $W_n(0) = x_n$, as follows:
\begin{equation}
\label{eq:coupled-new}
W_n(t) = 
\begin{cases}
x_n - \theta_n B(t) = \theta_n(\norm{x_n} - B(t)),\, t \le \tau_n;\\
\tilde{W}_n\left(t - \tau_n\right),\, t \ge \tau_n.
\end{cases}
\end{equation}
Consider the inverse $\mathfrak m^{-1} = (\mathfrak m^{-1}(s),\, s \ge 0)$ of running maximum  of a Brownian motion $B$ starting from zero. This process is a.s. continuous at every fixed time (although it has a.s. discontinuous trajectories). We can express $\tau_n := \mathfrak m^{-1}(\norm{x_n})$. Therefore, a.s. $\tau_n \to \tau_0$ as $n \to \infty$. Applying Lemma \ref{lemma:piecewise-functions} in Appendix, we conclude Corollary \ref{cor:Walsh-BM-continuous-dependence}.

\subsection{Proof of Theorem~\ref{thm:continuous-dependence-on-spinning-measure} in the general case} 

We split the proof into four steps. In the first three steps, we consider the driftless case: $g \equiv 0$. Step 1 is devoted to time-change and localization. That is, we consider an exhaustion of the state space $\mathcal I$ by an increasing sequence of compact domains $D_{m}$, $m \ge 1$. Then we fix one of these domains and stop {\it all} Walsh diffusions $X_{n}$ when they exit the interior of the domain. In Step 2, we prove the convergence result for these stopped Walsh diffusions. In Step 3, we switch from $\Int D_m$ to $\mathcal I$ and  show that for the driftless case the convergence takes place not only for those stopped, but also for the original Walsh diffusions. Finally, in Step 4, we use the scale mapping~\eqref{eq:scale} to extend this result to the general case with non zero drift function $g$. 

\smallskip

\noindent {\it Step 1: Time-change and localization.} Assume $g \equiv 0$. We apply the time-change in section 2.5 to the Walsh diffusions $X_{n}$ on $\mathcal I$ associated with $(0, \sigma, \mu_{n})$: 
\begin{equation} \label{Xnt}
X_n(t) = W_n(T_n(t)),\ \ T_n(t) := \int_0^t\si^2(X_n(s))\,\md s,\ \ t \ge 0,
\end{equation}
where $W_n = (W_n(t),\, t \ge 0)$ is a Walsh Brownian motion with spinning measure $\mu_n$, starting from $W_n(0) = x_n$, $n \ge 0$. By Assumption \ref{asmp:3} and Remark~\ref{rmk:particular-ell}, we may take a sequence of continuous functions $R_n : \mathbb S \to (0, \infty)$ such that $R_n(\theta) \to \ell(\theta)$ pointwise on $\mathbb S$. Recall the definition of $D_m$ in~\eqref{eq:D-m}. For $n \ge 0$ and $m \ge 1$, define the stopping times 
$$
\tau_n^{(m)} := \inf\{t \ge 0\mid X_n(t) \notin D_m\} \, =\, \inf \{ t \ge 0 \mid W_{n} (T_{n}(t)) \not \in D_{m}\} , 
$$
and the corresponding stopped processes $ X^{(m)}_n(t) := X_n (t\wedge\tau^{(m)}_n )$, $ T_n^{(m)}(t) := T_n(t\wedge\tau^{(m)}_n)$, $ t \ge 0$. 
Then it follows from (\ref{Xnt}) that for $n \ge 0$ and $m \ge 1$, 
\begin{equation}
\label{eq:connection}
X_n^{(m)}(t) = W_n\left(T^{(m)}_n(t)\right)  , \quad t \ge 0 . 
\end{equation}
In particular, $X_{0}^{(m)} (t) = W_{0}(T_{0}^{(m)}(t))$, $t \ge 0$. 

\medskip

\noindent {\it Step 2: Proof for the stopped processes.} The rest of the proof for the driftless case is quite similar to the proof of \cite[Theorem 2.2]{MyOwn8}. For the rest of Step 2, fix an $m \ge 1$. We shall show 

\begin{lemma}  Every subsequence of $(X_n^{(m)})_{n \ge 1}$ in (\ref{eq:connection}) has a weakly convergent subsequence, and this weak limit behaves as the Walsh (driftless) diffusion $X_0^{(m)}$, at least as long as it stays in $\Int D_m$.
\label{lemma:weak-limit-points}
\end{lemma}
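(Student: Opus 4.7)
The strategy is to work with the time-change identity~\eqref{eq:connection}, $X_n^{(m)} = W_n \circ T_n^{(m)}$, and to leverage the Walsh Brownian motion continuity result of Corollary~\ref{cor:Walsh-BM-continuous-dependence}, which already gives $W_n \Ra W_0$ in $C([0, T'], \BR^d)$ for every $T' > 0$. Since $\sigma$ is continuous on $\mathcal I$ by Assumption~\ref{asmp:3} and $D_m$ is Euclidean-compact, I would first fix constants $0 < \sigma_- \le \sigma_+ < \infty$ with $\sigma_- \le \sigma(x) \le \sigma_+$ on $D_m$, so that each $T_n^{(m)} : [0, T] \to [0, \sigma_+^2 T]$ is nondecreasing and uniformly $\sigma_+^2$-Lipschitz in $n$. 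Combining this uniform Lipschitz bound with tightness of $(W_n)$ then yields tightness of the triple $(W_n, T_n^{(m)}, X_n^{(m)})$ by a standard modulus-of-continuity argument: the modulus of $X_n^{(m)}$ on $[0,T]$ is controlled by the modulus of $W_n$ on $[0, \sigma_+^2 T]$ at scale $\sigma_+^2 \delta$.

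\textbf{Passing to a limit.} Given an arbitrary subsequence, I would extract a further subsequence along which the triple $(W_n, T_n^{(m)}, X_n^{(m)})$ converges jointly in distribution, and then apply the Skorohod representation theorem to realize it on a common probability space with almost sure uniform convergence $W_n \to W_0$, $T_n^{(m)} \to T^*$, $X_n^{(m)} \to X^*$. Continuity of composition (the Lipschitz bound rules out pathologies in the time change) then gives $X^*(t) = W_0(T^*(t))$ for all $t \in [0, T]$, where $W_0$ is a Walsh Brownian motion with spinning measure $\mu_0$ starting from $x_0$ by Corollary~\ref{cor:Walsh-BM-continuous-dependence}.

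\textbf{Identification of $T^*$ inside $\Int D_m$.} Set $\tau^* := \inf\{t \ge 0 \mid X^*(t) \notin \Int D_m\}$. For any $t_0 < \tau^*$, the set $X^*([0, t_0])$ is a compact subset of the open set $\Int D_m$, so uniform convergence forces $X_n^{(m)}(s) \in D_m$ for all $s \in [0, t_0]$ once $n$ is large enough, and hence $\tau_n^{(m)} \ge t_0$ eventually. For such $n$,
\[
T_n^{(m)}(t_0) \;=\; \int_0^{t_0}\sigma^2(X_n(s))\,\md s \;\longrightarrow\; \int_0^{t_0}\sigma^2(X^*(s))\,\md s
\]
by bounded convergence together with continuity of $\sigma^2$ on $D_m$. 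Therefore $T^*(t) = \int_0^t \sigma^2(X^*(s))\,\md s$ for every $t < \tau^*$, and the relation $X^*(t) = W_0(T^*(t))$ becomes exactly the time-change characterization from section~\ref{sec:2.4} of the driftless Walsh diffusion associated with $(0, \sigma, \mu_0)$, started at $x_0$. Weak uniqueness in law (Assumption~\ref{asmp:2}) then identifies the law of $X^*$, prior to its first exit from $\Int D_m$, with that of $X_0^{(m)}$ on the corresponding event.

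\textbf{Expected main obstacle.} The delicate point is that one cannot hope to identify $\lim_n \tau_n^{(m)}$ with $\tau^*$ in general: under uniform convergence one has only $\liminf_n \tau_n^{(m)} \ge \tau^*$, because $X^*$ may graze $\partial D_m$ without actually leaving. This is precisely why the lemma confines its conclusion to times during which $X^*$ remains in $\Int D_m$, rather than asserting equality of stopped processes up to and including the exit. Handling this cleanly will require keeping all limiting arguments on closed subintervals $[0, t_0] \subset [0, \tau^*)$ and only at the end letting $t_0 \uparrow \tau^*$; the jump from $\Int D_m$ to $D_m$, together with the eventual exit of the limit from this larger compact, will then be picked up in the subsequent step of the proof of Theorem~\ref{thm:continuous-dependence-on-spinning-measure} by letting $m \to \infty$.
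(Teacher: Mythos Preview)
Your proposal is correct and follows essentially the same route as the paper: uniform Lipschitz bounds on $T_n^{(m)}$ give tightness, Skorohod representation supplies joint almost-sure convergence, and continuity of $\sigma$ on the compact $D_m$ together with dominated convergence identifies $T^*$ as $\int_0^\cdot \sigma^2(X^*(s))\,\md s$ on $[0,\tau^*)$. The only slip is cosmetic: where you write ``$X_n^{(m)}(s) \in D_m$'' (trivially true for the stopped process) you intend ``$\in \Int D_m$,'' which then forces $\tau_n^{(m)} > t_0$ since otherwise $X_n^{(m)}$ would sit on $\partial D_m$ from time $\tau_n^{(m)}$ onward---this is exactly the paper's contradiction argument for $\bar\tau^{(m)} \le \tau_\infty^{(m)}$.
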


The rest of Step 2 is devoted to  proving Lemma~\ref{lemma:weak-limit-points}. For every $n \ge 0$ and $t \ge 0$, we have 
\begin{equation}
\label{eq:bound-on-sigma}
0 \le \si\left(X_n^{(m)}(t)\right) \le \max\limits_{D_m}\si =: \ol{\si}_m, 
\end{equation}
and hence, by definition of $T_{n}(t)$ in (\ref{Xnt}), 
\begin{equation}
\label{eq:bound-on-T}
0 \le T_n^{(m)}(t) \le \ol{\si}^2_mt.
\end{equation}
By the Arzela-Ascoli criterion combined with~\eqref{eq:bound-on-sigma}, the sequence $(T_n^{(m)})_{n \ge 1}$ is tight for every $m \ge 1$. Then every subsequence $(n_k)_{k \ge 1}$ of $\mathbb N$ has its further subsequence $(n'_k)_{k \ge 1}$ for which there exists a random process $\ol{T}^{(m)} = (\ol{T}^{(m)}(t),\, t \ge 0)$ such that 
\begin{equation} 
\label{eq:convergence-of-T}
T_{n'_k}^{(m)} \to \ol{T}^{(m)}\ \mbox{in}\ \ C[0, T]\,,\ \ \mbox{as}\ \ k \to \infty. 
\end{equation}

As we have already proved (in the proof of Corollary~\ref{cor:Walsh-BM-continuous-dependence}), we have $W_n \Ra W_0$ in $C([0, \ol{\si}_m^2T], \BR^d)$, as $\mu_{n}\Ra \mu_{0}$ and $x_{n}\Ra x_{0}$. Changing the probability space, if necessary, by the virtue of Skorohod representation theorem, we can make this convergence a.s. 
\begin{equation}
\label{eq:convergence-of-W}
W_n \to W_0\ \ \mbox{in}\ \ C([0, \ol{\si}^2_mT], \BR^d).
\end{equation}
Combining~\eqref{eq:connection},~\eqref{eq:bound-on-T}, ~\eqref{eq:convergence-of-T} and ~\eqref{eq:convergence-of-W}, we have the limit 
\begin{equation}
\label{eq:convergence-of-X}
X_{n'_k}^{(m)}(t) \to \ol{X}^{(m)}(t) := W_0(\ol{T}^{(m)}(t))\ \ \mbox{uniformly on}\ \ [0, T]. 
\end{equation}
Since the function $\si$ is continuous on $D_m$ by Assumption \ref{asmp:3}, we also get  
\begin{equation}
\label{eq:convergence-of-sigma}
\si\left(X_{n'_k}^{(m)}(t)\right) \to \si(\ol{X}^{(m)}(t)),\ \ k \to \infty,\ \ \mbox{for every}\ \ t \in [0, T].
\end{equation}
By~\eqref{eq:bound-on-sigma},~\eqref{eq:convergence-of-sigma}, and the Lebesgue dominated convergence theorem, for every $t \in [0, T]$, we have  
\begin{equation}
\label{eq:convergence-of-integrals}
\int_0^t\si^2\left(X_{n'_k}^{(m)}(s)\right)\,\md s \to \int_0^t\si^2\left(\ol{X}^{(m)}(s)\right)\,\md s.
\end{equation}

Denote $\tau_{\infty}^{(m)} := \varliminf_{n \to \infty}\tau^{(m)}_n$, and take a (random) time point $t_{0} < \tau_{\infty}^{(m)} \wedge T$. For every sufficiently large $k$, and all $s \in [0, t_{0}] \subseteq [0, \tau_{\infty}^{(m)}]$, we  have $X_{n'_k}^{(m)}(s) = X_{n'_k}(s)$ and $t_{0} \wedge \tau_{n_{k}^{\prime}}^{(m)} < \tau_{\infty}^{(m)} \wedge T$. Thus, combining~\eqref{Xnt}-(\ref{eq:connection}) with ~\eqref{eq:convergence-of-integrals}, for such $t_{0}$ we have  
\begin{equation*}
T_{n'_k}^{(m)}(t_{0}) = T_{n_{k}^{\prime}} (t_{0} \wedge \tau_{n_{k}^{\prime}}^{(m)})  = \int^{t_{0} \wedge \tau_{n_{k}^{\prime}}^{(m)} }_{0} \sigma^{2}( X_{n_{k}^{\prime}} (s)) {\mathrm d} s  = \int^{t_{0} \wedge \tau_{n_{k}^{\prime}}^{(m)} }_{0} \sigma^{2}( X_{n_{k}^{\prime}}^{(m)} (s)) {\mathrm d} s  \to \int^{t_{0}}_{0} \sigma^{2}( \overline{X}^{(m)} (s)) {\mathrm d} s  
\end{equation*}
as $k\to \infty$. Comparing  this observation 
with~\eqref{eq:convergence-of-T}, we get  
\begin{equation}
\label{eq:new-relation}
\ol{T}^{(m)}(t) = \int_0^t\si^2(\ol{X}^{(m)}(s))\,\md s, \ \ \mbox{and}\ \ \ol{X}^{(m)}(t) = W_0(\ol{T}^{(m)}(t))\ \ \mbox{for}\ \ t < \tau_{\infty}^{(m)}.
\end{equation}
The system ~\eqref{eq:new-relation} of equations implies that every subsequence 
$(n_k)_{k \ge 1}$ has another subsequence $(n'_k)_{k \ge 1}$ such that $X^{(m)}_{n'_k} \Ra \ol{X}$ in $C([0, T], \BR^d)$, where, at least until $\tau_{\infty}^{(m)}$, the process $\ol{X}$ behaves as a Walsh diffusion starting from $\ol{X}(0) = x_0$, associated with $(0, \si, \mu_0)$. For $\, m \ge 1\,$ define 
$$
\ol{\tau}^{(m)} := \inf\{t \ge 0\mid \ol{X}^{(m)}(t) \notin \Int D_m\} = \inf \{ t \ge 0 \mid W_{0}( \overline{T}^{(m)}(t) ) \not \in \text{int} D_{m} \} .
$$
Then we claim the following inequality 
$\, \ol{\tau}^{(m)} \le \tau_{\infty}^{(m)}\ \ \mbox{a.s.} \,$
for every $\, m \ge 1\,$. Indeed, assume the converse, i.e., $ \overline{\tau}^{(m)} > \tau_{\infty}^{(m)}$ with positive probability. Then there would be  a positive random variable $t_{1} < \ol{\tau}^{(m)}$ such that there exists a sequence $(\tilde{n}_k)_{k \ge 1}$ with $\tau^{(m)}_{\tilde{n}_k} \le t_{1}$ for all $k$. Since $X^{(m)}_{\tilde{n}_k}$ is stopped at $\tau^{(m)}_{\tilde{n}_k}$, we have  
$\, 
X^{(m)}_{\tilde{n}_k}\left(\tau^{(m)}_{\tilde{n}_k}\right) = X^{(m)}_{\tilde{n}_k}(t_{1}) \in \pa D_m.
\, $
Since $\pa D_m$ is closed, letting $k \to \infty$, we would have $\ol{X}(t_{1}) \in \pa D_m$ with positive probability. This, however, contradicts the property $t_{1} < \ol{\tau}^{(m)}$, which completes the proof of $\,\ol{\tau}^{(m)} \le \tau_{\infty}^{(m)}\ \ \mbox{a.s.}\,$

This proves that $\ol{X}$ behaves as a Walsh diffusion starting from $\ol{X}(0) = x_0$, associated with $(0, \si, \mu_0)$, at least until it exits $\Int D_m$. This completes the proof of Lemma~\ref{lemma:weak-limit-points}. 

\medskip

\noindent {\it Step 3: Proof for the driftless case.} This step is similar to \cite[Lemma 3.8]{MyOwn8}. For a given $\eta \in (0, 1) $ we may take an $m$ large enough, so that the set $\mathcal A := \left\{x \in C([0, T], \BR^d)\mid x(t) \in \Int D_m\ \ \forall\ t \in [0, T]\right\}$ has probability greater than $1 - \eta$, i.e., $ \mathbb P \left(X_0 \in \mathcal A\right)  > 1 - \eta$, since $\mathbb P  \left(X_0(t) \in \mathcal I\ \ \forall \ t \in [0, T]\right) = 1$, and $ \Int D_m \uparrow \mathcal I$ as $\, m\to \infty\,$.

Now, by Lemma \ref{lemma:weak-limit-points} every sequence $(n_k)_{k \ge 1}$ has a subsequence $(n'_k)_{k \ge 1}$, such that $X_{n'_k}^{(m)} \Ra \ol{X}$ in $C([0, T], \BR^d)$. Then for every Borel subset $\mathcal B \subseteq C([0, T], \BR^d)$, we have 
\begin{equation}
\label{eq:equality-in-law}
\mathbb P  \left(\ol{X} \in \mathcal A\cap\mathcal B\right) = \mathbb P  \left(X_0 \in \mathcal A\cap\mathcal B\right).
\end{equation}
In particular, letting $\mathcal B := C([0, T], \BR^d)$, we have:
\begin{equation}
\label{eq:eta-estimate}
\mathbb P  \left(\ol{X} \in \mathcal A\right) = \mathbb P  \left(X_0 \in \mathcal A\right) > 1 - \eta.
\end{equation}
Moreover, for every Borel subset $\mathcal B \subseteq C([0, T], \BR^d)$ and $n  \ge 1$, we have:
\begin{equation}
\label{eq:correct-stopping}
\mathbb P  \left(X^{(m)}_n \in \mathcal A\cap\mathcal B\right) = \mathbb P  \left(X_n \in \mathcal A\cap\mathcal B\right).
\end{equation}
For an open subset $\mathcal G \subseteq C([0, T], \BR^d)$, the subset $\mathcal A\cap\mathcal G$ is also open, and hence, 
\begin{equation}
\label{eq:portmanteau12}
\mathbb P  (X_0 \in \mathcal A\cap\mathcal G) = \mathbb P  (\ol{X} \in \mathcal A\cap\mathcal G) \le \varliminf_{k \to \infty} \mathbb P  \left(X^{(m)}_{n^{\prime}_k} \in \mathcal A\cap \mathcal G\right).
\end{equation}

On the other hand, it follows from~\eqref{eq:equality-in-law} and~\eqref{eq:eta-estimate} that 
\begin{equation}
\begin{split}
\mathbb P  &\left(\ol{X} \in \mathcal A\cap\mathcal G\right) = \mathbb P  \left(X_0 \in \mathcal A\cap\mathcal G\right)  \ge \mathbb P  \left(X_0 \in \mathcal G\right) - \mathbb P  \left(X_0 \notin \mathcal A\right) \ge \mathbb P  (X_0 \in \mathcal G) - \eta.
\label{eq:1205}
\end{split}
\end{equation} 
Combining~\eqref{eq:correct-stopping},~\eqref{eq:portmanteau12} with ~\eqref{eq:1205}, we obtain 
\begin{equation}
\label{eq:new-conv}
\varliminf\limits_{k \to \infty}\mathbb P\left(X_{n'_k} \in \mathcal G\right)  \ge \varliminf\limits_{k \to \infty}\mathbb P\left(X^{(m)}_{n'_k} \in \mathcal A\cap\mathcal G\right) \ge \mathbb P\left(X_0 \in \mathcal G\right) - \eta.
\end{equation}
Thus for every sequence $(n_k)_{k \ge 1}$ and every $\eta > 0$ there exists a subsequence $(n'_k)_{k \ge 1}$ such that~\eqref{eq:new-conv} holds. Use the diagonal argument: let $n^{(0)}_k := n_k$, and construct $(n^{(l)}_k)_{k \ge 1}$ inductively: 
$(n^{(l)}_k)_{k \ge 1}$ plays the role of $(n'_k)_{k \ge 1}$ for $n_k := n^{(l-1)}_k$, $\eta := l^{-1}$. Then for $\ol{n}_k := n^{(k)}_k$, we have 
$\, 
\varliminf\limits_{k \to \infty}\mathbb P\left(X_{\ol{n}_k} \in \mathcal G\right)  \ge \mathbb P\left(X_0 \in \mathcal G\right).
\, $
Therefore, we claim that for every sequence $(n_k)$, there exists a subsequence $(\ol{n}_k)$ such that $X_{\ol{n}_k} \Ra X_0$ in $C([0, T], \BR^d)$. This completes the proof of Theorem~\ref{thm:continuous-dependence-on-spinning-measure} for the case $g \equiv 0$. 

\medskip

\noindent {\it Step 4: General case.} The general case (with an arbitrary drift function $g$) can be reduced to the driftless case by scale transformation~\eqref{eq:scale-mapping}. Recall that the process $\mathcal P(X_n(\cdot))$ is a Walsh diffusion starting from $\mathcal P(x_n)$, associated with $(0, \tilde{\si}, \mu_n)$, where  $\tilde{\mathcal I}$ and $\tilde{\si}$ are given by~\eqref{eq:scale-mapping} and~\eqref{eq:new-sigma}. Lemma~\ref{lemma:scale-continuous} below, together with continuity of $\si$, implies continuity of the function $\tilde{\si}$, and of the mappings $\mathcal P$ and $\mathcal P^{-1}$. Since $x_n \to x_0$, we have $\mathcal P(x_n)  \to \mathcal P(x_0)$. Moreover, because of $\mu_n \Ra \mu_0$, from results just proven, we have $\mathcal P(X_n(\cdot)) \Ra \mathcal P(X_0(\cdot))$ in $C([0, T], \BR^d)$. Finally, because $\mathcal P^{-1}$ is continuous, we have $X_n \Ra X_0$ in $C([0, T], \BR^d)$. This completes the proof of Theorem~\ref{thm:continuous-dependence-on-spinning-measure} for the general case, given Lemma \ref{lemma:scale-continuous} below. 

\begin{lemma}
\label{lemma:scale-continuous}
Under Assumption \ref{asmp:3}, the scale function $s(r, \theta)$ from~\eqref{eq:scale}, and the inverse scale function $s^{-1}(r, \theta)$ from~\eqref{eq:inverse-scale}, are continuous in the  Euclidean topology. 
\end{lemma}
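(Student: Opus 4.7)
The plan is to first establish joint Euclidean continuity of $s$, and then derive continuity of $s^{-1}$ by a standard monotone sandwich argument. Fix a target point $(r_0, \theta_0)\in\mathcal I$. If $r_0>0$, I would use lower semicontinuity of $\ell$ to choose $\eps>0$ and a relatively open neighborhood $U\subseteq\mathbb S$ of $\theta_0$ such that $\ell(\theta) > r_0+\eps$ for all $\theta\in\overline{U}$; compactness of $\overline{U}$ in $\mathbb S$ then makes $K:=\{u\theta\mid u\in[0,r_0+\eps],\, \theta\in\overline{U}\}$ a Euclidean-compact subset of $\mathcal I$, on which the continuous $g$ is bounded and the continuous, strictly positive $\si$ is bounded below, so that the exponential integrand in~\eqref{eq:scale} is bounded on $[0,r_0+\eps]\times\overline{U}$ by some constant $M$. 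For the boundary case $r_0=0$, compactness of $\mathbb S$ together with $\inf_{\mathbb S}\ell>0$ provides the analogous uniform bound on $[0,r^{\ast}]\times\mathbb S$ for some $r^{\ast}>0$, and then $|s(r,\theta)|\le Mr$ immediately gives $s(r_n,\theta_n)\to 0$ whenever $r_n\theta_n\to\mathbf{0}$.

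For the main case $r_0>0$ and a sequence $(r_n,\theta_n)\to(r_0,\theta_0)$ in Euclidean topology that eventually lies in $K$, I would split
\[
|s(r_n,\theta_n)-s(r_0,\theta_0)| \le |s(r_n,\theta_n)-s(r_0,\theta_n)| + |s(r_0,\theta_n)-s(r_0,\theta_0)|.
\]
The first summand is bounded by $M|r_n-r_0|\to 0$. For the second, continuity of $g$ and $\si$ together with the uniform bound on $g/\si^2$ over $K$ let dominated convergence pass the limit $n\to\infty$ inside the inner integral $\int_0^u g(z,\theta_n)/\si^2(z,\theta_n)\,\md z$ for each $u\in[0,r_0]$; a second application of dominated convergence on $[0,r_0]$ with dominating constant $M$ then yields $s(r_0,\theta_n)\to s(r_0,\theta_0)$. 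This proves joint continuity of $s$.

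For the inverse, fix $(\rho_0,\theta_0)$ with $0\le\rho_0<s(\ell(\theta_0),\theta_0)$, set $r_0:=s^{-1}(\rho_0,\theta_0)$, and pick $R',R''$ with $0\le R'<r_0<R''<\ell(\theta_0)$ (using only $R''$ if $r_0=0$). By the continuity of $s$ just proved and by lower semicontinuity of $\ell$, for any $(\rho_n,\theta_n)\to(\rho_0,\theta_0)$ one has $\ell(\theta_n)>R''$ eventually, together with $s(R'',\theta_n)\to s(R'',\theta_0)>\rho_0$ and (when $r_0>0$) $s(R',\theta_n)\to s(R',\theta_0)<\rho_0$. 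Strict monotonicity of each $s(\cdot,\theta_n)$ then forces $r_n:=s^{-1}(\rho_n,\theta_n)\in(R',R'')$ for large $n$, so the bounded sequence $(r_n)$ has every subsequential limit $r^{\ast}$ satisfying $s(r^{\ast},\theta_0)=\lim_k s(r_{n_k},\theta_{n_k})=\rho_0$ by joint continuity of $s$; strict monotonicity of $s(\cdot,\theta_0)$ then gives $r^{\ast}=r_0$, so $r_n\to r_0$. The only real obstacle is packaging the uniform estimates near $(r_0,\theta_0)$: lower semicontinuity of $\ell$ (rather than full continuity) is exactly what is needed to trap $g$ and $\si^{-1}$ uniformly on a Euclidean-compact slab around the integration path, legitimizing both dominated-convergence steps; once that slab is fixed, the rest of the argument is routine.
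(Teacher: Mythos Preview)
Your proposal is correct and follows essentially the same approach as the paper: continuity of $s$ via dominated convergence using local boundedness of $g/\si^2$, and continuity of $s^{-1}$ deduced from strict monotonicity of $s(\cdot,\theta)$ together with the joint continuity of $s$. The only cosmetic difference is that for $s^{-1}$ the paper argues directly that $s^{-1}(r_n,\theta_n) < s^{-1}(r_0,\theta_0)+\eps$ eventually (by rewriting this as $r_n < s(s^{-1}(r_0,\theta_0)+\eps,\theta_n)$ and using continuity of $s$), whereas you use an equivalent sandwich-plus-subsequence formulation; your version is more explicit about the role of lower semicontinuity of $\ell$ in securing the compact slab, which the paper leaves implicit.
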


\begin{proof} Under Assumption \ref{asmp:3}, continuity of the scale function $s(\cdot)$ follows from continuity and local boundedness of the function $2g\si^{-2}$, together with Lebesgue dominated convergence theorem. Let us take a sequence $(r_{n}, \theta_{n}) $ that converges to $(r_{0}, \theta_{0})$, i.e., $(r_n, \theta_n) \to (r_0, \theta_0)$. We shall show that $s^{-1}(r_n, \theta_n) \to s^{-1}(r_0, \theta_0)$, that is, for every $\eps > 0$ and for all $n$, except finitely many, we have  
\begin{equation}
\label{eq:comparison-of-scales}
s^{-1}(r_n, \theta_n) < s^{-1}(r_0, \theta_0) + \eps.
\end{equation} 
Since $s(r, \theta)$ is strictly increasing in $r$ for every fixed $\theta$, ~\eqref{eq:comparison-of-scales} would be equivalent to 
\begin{equation}
\label{eq:333}
r_n < s\left(s^{-1}(r_0, \theta_0) + \eps, \theta_n\right).
\end{equation}
Note that letting $n \to \infty$ in~\eqref{eq:333} and using continuity of $s$, we see the left-hand side converges to $r_0$, and the right-hand side converges to $s\left(s^{-1}(r_0, \theta_0) + \eps, \theta_0\right)$. Here since 
$$
r_0 = s\left(s^{-1}(r_0, \theta_0), \theta_0\right) < s\left(s^{-1}(r_0, \theta_0) + \eps, \theta_0\right),
$$
we must get~\eqref{eq:333} for large enough $n$. This completes the proof of Lemma~\ref{lemma:scale-continuous}. 
\end{proof}

\section{Feller and Positivity Properties} 

In this section, we study several properties: Feller property; positivity of transition kernel (that $\mes\otimes\mu$-positive subsets of $\BR^d$ have positive transition measure). They are necessary for the next section, where we find Lyapunov functions for Walsh diffusions to prove  existence and uniqueness of a stationary distribution and convergence to this stationary distribution as $t \to \infty$. 

\subsection{Positivity of transition kernel} 
Let us first mention intuition for general Markov processes. Very loosely speaking, a Markov process is called {\it irreducible}, if the state space cannot be separated into two or more parts such that the process cannot move between them; and it is called {\it aperiodic} if the state space cannot be separated into two or more parts such that the process circulates between them.  If the process is tight, then irreducibility and aperiodicity guarantee existence and uniqueness of a probability invariant measure. Since we do not use these particular (very important) concepts in this paper, we shall not rigorously define them here. But we, however, prove a stronger property: positivity. 
Let us define the following {\it reference measure}:
\begin{equation}
\label{eq:reference-measure}
\tilde{\mu} := \mu\, \otimes\, \mes\, 
\end{equation}
and consider a Walsh diffusion on $\, \mathcal I\,$ associated with $(g, \si, \mu)$ and the transition kernel $\, P^t(x, \cdot) = \mathbb P(X(t) \in  \cdot \mid X(0) = x)\, $ for $\, x \in \mathcal  I\,$, $\, t >  0\,$. 

\begin{thm} Under Assumptions \ref{asmp:1} and \ref{asmp:2}, the transition kernel is positive, that is, for every $t > 0$, $x \in \mathcal I$, and a Borel subset $C \subseteq \mathcal I$ with $\tilde{\mu}(C) > 0$,  we have $\, P^t(x, C) > 0\, $. 
\label{thm:positivity}
\end{thm}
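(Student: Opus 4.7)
The plan is to replace $C$ by a product-shaped subset and then route the process through the origin. Since $\tilde{\mu} = \mu \otimes \mes$ gives positive mass to $C$, Fubini produces a Borel $A \subseteq \supp \mu$ with $\mu(A) > 0$ whose radial slices $C_\theta := \{r > 0 : r\theta \in C\}$ have positive Lebesgue measure. After shrinking $A$ further (still with $\mu(A) > 0$) and picking $0 < a < b$ with $b < \ell(\theta)$ for all $\theta \in A$, I would work with the smaller set $C' := \{r\theta : \theta \in A,\ r \in C_\theta \cap (a,b)\} \subseteq C$, which still has $\tilde{\mu}(C') > 0$. It then suffices to show $P^t(x, C') > 0$.

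\textbf{Strong Markov decomposition.} Let $\tau_0 := \inf\{s \ge 0 : X(s) = \mathbf{0}\}$. For $x \ne \mathbf{0}$, the strong Markov property at $\tau_0$ gives
\[
P^t(x, C') \ \ge\ \mathbb{E}_x\!\left[\, P^{\,t - \tau_0}(\mathbf{0}, C')\,;\ \tau_0 < t\,\right].
\]
Positivity of this expectation reduces to two sub-claims: (i) $\mathbb{P}_x(\tau_0 < t) > 0$ for every $x \ne \mathbf{0}$, and (ii) $P^s(\mathbf{0}, C') > 0$ for every $s > 0$. For $x = \mathbf{0}$, only (ii) with $s = t$ is needed.

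\textbf{The two sub-claims.} For (i): until $\tau_0$ the process stays on ray $\mathcal R_{\arg(x)}$, and $\norm{X}$ is a one-dimensional diffusion with coefficients $g(\cdot, \arg(x))$, $\sigma^2(\cdot, \arg(x))$. The scale function~\eqref{eq:scale} and the time-change of Section~2.5 convert it into a time-changed piece of Brownian motion on $\bigl(0,\, s(\ell(\arg(x)), \arg(x))\bigr)$; Assumption~\ref{asmp:2} rules out exit through the upper endpoint, and Assumption~\ref{asmp:1} (via Girsanov equivalence to a Brownian motion with bounded drift) yields positive probability of hitting $0$ before time $t$. For (ii): from $X(0) = \mathbf{0}$, $\norm{X}$ is a one-dimensional reflected SDE whose excursions are indexed by Itô's excursion measure, each excursion carrying an independent direction drawn from $\mu$. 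Let $\eta \in [0, s)$ be the start of the excursion straddling time $s$ and let $\theta$ be its direction. Conditionally on $\eta$ and on $\theta \in A$, $\norm{X(s)}$ is the value at age $s - \eta$ of a one-dimensional excursion from $0$ of the diffusion with coefficients $g(\cdot, \theta)$, $\sigma^2(\cdot, \theta)$; the corresponding meander-from-$0$ density is strictly positive on $(0, \ell(\theta))$ by classical one-dimensional theory, so $\mathbb{P}_{\mathbf{0}}(\norm{X(s)} \in C'_\theta \mid \theta, \eta) > 0$. Integrating against the joint law of $(\eta, \theta)$ and using $\mu(A) > 0$ yields $P^s(\mathbf{0}, C') > 0$.

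\textbf{Main obstacle.} The delicate step is (ii). It requires setting up Itô's excursion theory for the Walsh process with a possibly non-atomic spinning measure $\mu$, controlling the distribution of the ongoing-excursion age $s - \eta$ at a deterministic time $s$, and proving strict positivity of the one-dimensional meander density for a variable-coefficient diffusion. The decisive simplification is that under Itô's excursion measure the radial excursion and the attached direction are independent, which decouples the angular integration against $\mu$ from the radial positivity estimate.
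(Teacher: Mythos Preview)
Your reduction to $x = \mathbf{0}$ via the strong Markov property at $\tau_0$, together with sub-claim~(i), is exactly the paper's Step~1. The divergence is in how you handle sub-claim~(ii).

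The paper does \emph{not} attack $P^s(\mathbf{0},C')>0$ through excursion theory. Instead it first applies the scale map $\mathcal P$ of~\eqref{eq:scale-mapping} to remove the drift (Step~4), then the Dambis--Dubins--Schwarz time change $X(t)=W(T(t))$ to pass to a genuine Walsh Brownian motion $W$ with the same spinning measure $\mu$ (Step~3). For Walsh Brownian motion the fact that $\arg W(s)\sim\mu$ independently of $\norm{W(s)}$ is part of the construction, so Step~2 is a one-line computation. The remaining work is to transfer positivity back through the random time change, which the paper does via the stopping times~\eqref{eq:stopping-times-1}--\eqref{eq:stopping-times-3} and Lemma~\ref{lemma:intermediate-positivity}.

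Your excursion-theoretic route is morally correct, but the sentence ``$\norm{X}$ is a one-dimensional reflected SDE whose excursions are indexed by It\^o's excursion measure, each excursion carrying an independent direction drawn from $\mu$'' hides the entire difficulty. First, $\norm{X}$ is \emph{not} an autonomous one-dimensional diffusion: its coefficients $g(X(s)),\sigma(X(s))$ depend on $\arg X(s)$, so $\norm{X}$ alone is not Markov and standard one-dimensional excursion theory does not apply to it. You must run It\^o's excursion theory for the full process $X$ away from $\mathbf{0}$. Second, and more seriously, the claim that under the excursion measure the direction is $\mu$-distributed and independent of the radial part is \emph{not} the definition of a Walsh diffusion; the definition is the partition-of-local-time identity~\eqref{eq:rays-mu}. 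Passing from~\eqref{eq:rays-mu} to the excursion-measure factorisation is true but requires justification, and the cleanest way to obtain it is precisely the scale-plus-time-change reduction to Walsh Brownian motion that the paper uses. So your ``decisive simplification'' is, in effect, the paper's proof in disguise. If you want a genuinely independent argument you would need to derive the excursion-measure factorisation directly from~\eqref{eq:rays-mu} (or from the one-point-reflection construction of Section~\ref{sec:DirichletC}), which is substantially more work than what you have written.
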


\begin{proof} We shall show this theorem in four steps. 

\medskip 

{\it Step 1.} Let us argue first that it suffices to show the case $x = \mathbf{0}$. Indeed, if the initial value $ X(0) = x = (r, \theta) \in \mathcal I $ is not the origin $\, \mathbf{0}\,$, then until the first hitting time $\tau_{\mathbf{0}} := \inf\{t \ge 0\mid X(t) = \mathbf{0}\}$, $\,X(\cdot)\,$ can be represented as $X(\cdot) = \theta Z(\cdot)$, where $Z = (Z(t),\, t \ge 0)$ is a diffusion on the half-line with drift $g(\cdot, \theta)$ and diffusion $\si^2(\cdot, \theta)$, starting from $Z(0) = r > 0$ and killed at the origin. Then $\tau_{\mathbf{0}} = \inf\{t \ge 0\mid Z(t) = 0\}\, $ and hence for such diffusion $\,Z(\cdot)\,$ we have $\mathbb P(\tau_{\mathbf{0}} < t) > 0$, $\, t > 0\,$, since  the functions $g, \si, \si^{-1}$ are locally bounded on $\BR_+$. Thus if the statement of the theorem is true in the case $\, x \, =\, \mathbf 0\,$, then  $\, P^{u} ( {\mathbf 0}, C) > 0\,$ for every $\,u > 0\,$, and hence, 
$$
P^t(x, C) \ge \int_0^tP^{t-s}( {\mathbf 0}, C)\,\mathbb P(\tau_{\mathbf{0}} \in \md s \vert X(0) = x) > 0\, ; \quad x \in I\, ,  
$$
because the Lebesgue integral of a positive function over a set of positive measure is positive, and hence the statement is true for every $\, x \in I\,$. Thus it suffices to show the case $\, x \, =\, \mathbf 0\,$. 

\medskip

{\it Step 2.} Next, consider the case of Walsh Brownian motion $\, X(\cdot) \, :=\, W(\cdot) \,$ associated with $g \equiv 0$ and $\si \equiv 1$ starting at the origin, with  the set $\, C\,$ of the form $C = A \times B \subseteq \mathbb S \times (0, \infty) $ with $\, \tilde{\mu} (C) >  0\,$. Then by the construction (e.g., \cite[Theorem 2.1]{IchibaNew}), $\arg (W(t))$ is distributed as $\, \mu\,$, independent of $\norm{W(t)}$, a reflected Brownian motion on the half-line, starting from zero. Thus 
$$
P^t( \mathbf 0, C) = \mathbb P(\arg (W(t)) \in A,\, \norm{W(t)} \in B) = \mu(A)\cdot\mathbb P( \norm{W(t)} \in B) > 0\, ; \quad t > 0 \, . 
$$

\medskip

{\it Step 3.} Now consider the case of {\it driftless} Walsh diffusions with $g \equiv 0$. From \cite[Proposition 3.4]{MinghanNew}, we have the Dambis-Dubins-Schwartz-type representation: $X(t) = W(T(t))$, where $W  (\cdot )$ is a Walsh Brownian motion in Step 2 starting from $W(0) = \mathbf 0$, and 
\begin{equation}
\label{eq:expression-for-time-change}
T(t) = \int_0^t\si^2(X(s))\,\md s , \quad t \ge 0 \, . 
\end{equation}
For the set $C = A \times B$ define $R := \sup B < l_{\min} := \inf_{\theta \in A} \ell(\theta)$, and for a fixed $R' \in (R,  l_{\min})$ 
\begin{equation}
\label{eq:stopping-times-1}
\tau_{R, A}^{X} := \inf\{t \ge 0\mid \norm{X(t)} = R,\, \arg X(t) \in A\},
\end{equation}
\begin{equation}
\label{eq:stopping-times-2}
\tau_{R, A}^{W} := \inf\{t \ge 0\mid \norm{W(t)} = R,\, \arg W(t) \in A\},
\end{equation}
\begin{equation}
\label{eq:stopping-times-3}
\tau_{R'}^X := \inf\{t \ge 0\mid \norm{X(t)} = R'\}, \quad 
\tau_{R'}^W := \inf\{s \ge 0\mid \norm{W(s)} = R'\} \, . 
\end{equation}

With Lemma \ref{lemma:intermediate-positivity} in Appendix we claim $\mathbb P \left(\tau^X_{R, A} < t\right) > 0$. Note that 
\begin{equation}
\label{eq:final-estimate}
\mathbb P(X(t) \in C = A\times B) \ge \int_0^t\int_Ap(t-s, \theta)\,\mathbb P(\tau^X_{R, A} \in \md s,\,\arg (X(s)) \in \md\theta),
\end{equation}
where $p(u, \theta)$ is the probability that a reflected diffusion on the half-line with zero drift and diffusion $\si^2(\cdot, \theta)$, starting from $R$, stays in $(0, R']$ on the time interval $[0, u]$, and hits the set $B$ at time $u$. From boundedness of $\si$ and $\si^{-1}$ on $[0, R']$, which follows from Assumption \ref{asmp:1}, we have: $p(u, \theta) > 0$. Again using the observation that the Lebesgue integral of a positive function over a set of positive measure is positive, we see that the right hand of~\eqref{eq:final-estimate} is positive, which completes the proof of Theorem~\ref{thm:positivity} for the case of driftless Walsh diffusions.

\medskip

{\it Step 4.} Finally, let us prove Theorem~\ref{thm:positivity} in the general case. It can be reduced via \cite[Proposition 3.12]{MinghanNew} to the driftless case. Using the notation from there, we observe that the one-to-one function $\mathcal P : \mathcal I \to \tilde{\mathcal I}$ from~\eqref{eq:scale-mapping} maps the Walsh diffusion with nonzero drift to another Walsh diffusion with zero drift. The new Walsh diffusion also has a diffusion coefficient $\tilde{\si}$ from~\eqref{eq:new-sigma} such that $\tilde{\si}$ and $\tilde{\si}^{-1}$ are both locally bounded on $\tilde{\mathcal I}$. Also, the map $\mathcal P$, as well as its inverse $\mathcal Q$, maps $\tilde{\mu}$-positive subsets into $\tilde{\mu}$-positive subsets. This follows from the observation that these maps preserve arguments of points: $\arg(\mathcal P(x)) = \arg (x)$ for $x \in \mathcal I\setminus\{\mathbf{0}\}$, and the radial derivative of $\mathcal P$ is everywhere positive. Thus Theorem~\ref{thm:positivity} is a simple corollary for the driftless case. 
\end{proof}

\subsection{Feller property for the tree topology} Next, we shall prove the Feller property of Walsh diffusions, that is, the semigroup maps bounded continuous functions into bounded continuous functions. Fix a bounded continuous (in the tree topology) function $f : \mathcal I \to \BR$.  

\begin{thm} 
\label{thm:Feller} Under Assumptions \ref{asmp:1} and \ref{asmp:2}, for $t > 0$, if $x \to x_0$ in $\mathcal I$, then 
$$
\mathbb E_x [f(X(t)) ] \to \mathbb E_{x_0}[ f(X(t))]\, .
$$
\end{thm}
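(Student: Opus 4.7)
The plan is to reduce the Feller property to the one-dimensional Feller property along each ray by conditioning on the first hitting time of the origin via strong Markov. For $x \in \mathcal I$ set $\tau_x := \inf\{s \ge 0 \mid X(s) = \mathbf{0}\}$ under $\mathbb P_x$, and let $h(s) := \mathbb E_{\mathbf{0}}[f(X(s))]$. By the strong Markov property combined with Assumption~\ref{asmp:2},
\begin{equation*}
\mathbb E_x[f(X(t))] \, =\, \mathbb E_x \bigl[ f(X(t))\,\mathbf{1}_{\{\tau_x > t\}} \bigr] + \mathbb E_x \bigl[ h(t - \tau_x)\,\mathbf{1}_{\{\tau_x \le t\}} \bigr].
\end{equation*}
A crucial preliminary observation is that $h$ is continuous and bounded on $[0, \infty)$: by Definition~\ref{defn:smgle-rays} the path $s \mapsto X_{\mathbf{0}}(s)$ is a.s.\ continuous in the tree topology, and since $f$ is bounded and tree-continuous, bounded convergence yields continuity of $h$.

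When $x_0 \ne \mathbf{0}$, write $x_0 = r_0\theta_0$ with $r_0 > 0$; tree convergence $x \to x_0$ forces $x = r\theta_0$ on the same ray, with $r \to r_0$. Up to time $\tau_x$, $X(\cdot) = \theta_0\,Z_x(\cdot)$, where $Z_x$ solves the one-dimensional SDE~\eqref{eq:SDE-1D} with coefficients $g(\cdot, \theta_0)$, $\sigma(\cdot, \theta_0)$, locally bounded with $\sigma > 0$ by Assumption~\ref{asmp:1}. Coupling all $Z_x$ via a common driving Brownian motion, standard one-dimensional theory yields a.s.\ uniform convergence on $[0, t]$ and $\tau_x \to \tau_{x_0}$ a.s.\ (with $\mathbb P(\tau_{x_0} = t) = 0$ by non-degeneracy of $\sigma$). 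Tree-continuity of $f$, continuity of $h$, and boundedness then give a.s.\ convergence of both summands in the displayed decomposition, and bounded convergence delivers the limit. When $x_0 = \mathbf{0}$, tree convergence is simply $\norm{x} \to 0$; the scale function~\eqref{eq:scale} and the time-change of Section~\ref{sec:2.4}, which are bi-Lipschitz in a neighbourhood of the origin under Assumption~\ref{asmp:1}, reduce the radial hitting-time estimate to the classical Brownian fact that $\mathbb P_x(\tau_x > t) \to 0$ as $\norm{x} \to 0$. The first summand is then dominated by $\norm{f}_\infty\,\mathbb P_x(\tau_x > t) \to 0$, and the second, by continuity of $h$ combined with $\tau_x \to 0$ in probability and bounded convergence, converges to $h(t) = \mathbb E_{\mathbf{0}}[f(X(t))]$.

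The principal obstacle is the one-dimensional input: the a.s.\ continuity $\tau_x \to \tau_{x_0}$ in the case $x_0 \ne \mathbf{0}$, and the convergence $\tau_x \to 0$ in probability in the case $x_0 = \mathbf{0}$. Both are classical under locally bounded, non-degenerate coefficients, but applying them demands care: one must upgrade the local boundedness of Assumption~\ref{asmp:1} into \emph{uniform} estimates in a neighbourhood of the origin, via the scale function~\eqref{eq:scale} and the time-change of Section~\ref{sec:2.4}, in order to compare the radial process with a Brownian motion. Once this uniform control is in place, the strong Markov decomposition together with tree-path continuity of the Walsh diffusion and tree-continuity of $f$ makes the remaining argument essentially immediate.
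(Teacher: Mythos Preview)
Your proposal is correct and follows the same overall architecture as the paper: both use the strong Markov property to decompose the expectation into a piece where the process has already visited the origin (handled via the function $h(s) = \mathbb E_{\mathbf 0}[f(X(s))]$, whose continuity both arguments exploit) and a piece where it remains on the ray $\mathcal R_{\theta_0}$ (handled by one-dimensional theory). The case $x_0 = \mathbf 0$ is treated identically.

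The coupling, however, is organised differently. The paper couples $X^{(r)}$ to $X^{(r_0)}$ \emph{at the target point} $r_0$: it runs a monotone family $Z^{(r)}$ of one-dimensional diffusions until either $0$ or $r_0$ is hit, and in the dominant case (hitting $r_0$ first) switches to a time-shifted copy of $X^{(r_0)}$. The only quantitative input is then $\tau_{r,r_0} \downarrow 0$, which follows immediately from the monotone coupling. Your decomposition instead couples at the origin and therefore needs two one-dimensional facts simultaneously: pathwise convergence $Z_x(t) \to Z_{x_0}(t)$ on $\{\tau_{x_0} > t\}$, and $\tau_x \to \tau_{x_0}$ a.s. These are indeed available, but your phrase ``coupling all $Z_x$ via a common driving Brownian motion'' is not quite justified under Assumption~\ref{asmp:1} alone (local boundedness, no Lipschitz regularity, weak solutions only); you correctly identify later that the scale function and time-change of Section~\ref{sec:2.4} are what make such a coupling legitimate, and you should invoke them already here rather than only for the case $x_0 = \mathbf 0$. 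Once that is done, your argument is slightly more economical in bookkeeping, while the paper's is more self-contained in that it avoids appealing to the one-dimensional Feller property as a black box.
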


\begin{proof} We shall first consider the harder case $x_0 \ne \mathbf{0}$, and then discuss  the easier case.

\medskip

\noindent {\it Case 1.} Assume $x_0 = \theta r_0 \ne \mathbf{0}$ for some $\theta$, that is, $r_0 > 0$; and $x = \theta r$, with $r \uparrow r_0$. Take a copy $X^{(r_0)} = (X^{(r_0)}(t),\, t \ge 0)$ of this Walsh diffusion starting from $X^{(r_0)}(0) = x_0$. One can construct (on the same probability space as $X^{(r_0)}$) a family $(Z^{(r)})_{r \in (0, r_0]}$ of reflected diffusions on the half-line with drift $g(\cdot, \theta)$ and diffusion $\si^2(\cdot, \theta)$, starting from $Z^{(r)}(0) = r$, such that $Z^{(r)}(t) \le Z^{(r')}(t)$ a.s. for every $t \ge 0$ and $0 < r < r' \le r_0$. Also, assume that this probability space contains a Walsh diffusion $X^{(0)} = (X^{(0)}(t),\, t \ge 0)$ with the same drift and diffusion coefficients, starting from the origin, independent of everything else. 
Let $\tau_{r, a} := \inf\{t \ge 0\mid Z^{(r)}(t) = a\}$ for $r \in (0, r_0]$ and $a \in \BR_+$. Then since 
$ \tau_{r, r_0} \downarrow \tau_{r_0, r_0} = 0$, $ \tau_{r, 0} \uparrow \tau_{r_0, 0} > 0 $ a.s., as $r \uparrow r_0$, 


\begin{equation}
\label{eq:convergence-of-indicators}
\begin{split}
\lim_{r\uparrow r_{0}} 1\left(\tau_{r, r_0} < \tau_{r, 0}\right) = 1 , \quad  \lim_{r\uparrow r_{0}}  1\left(\tau_{r, r_0} > \tau_{r, 0}\right) = 0 , \\ 
\lim_{r\uparrow r_{0}}  1\left(t \le \tau_{r, r_0} < \tau_{r, 0}\right) = 0,\quad \lim_{r\uparrow r_{0}} 1\left(\tau_{r, r_0} < \tau_{r, 0}\wedge t\right) = 1\ \ \mbox{a.s.} 
\end{split}
\end{equation}

For every $r \in (0, r_0)$, let us construct a copy $X^{(r)} = (X^{(r)}(t),\, t \ge 0)$ of this Walsh diffusion starting from $X^{(r)}(0) = \theta r$. 
The construction of $X^{(r)}$ proceeds as follows.
\smallskip

\noindent (a) If $\tau_{r, 0} < \tau_{r, r_0}$, that is, the reflected diffusion $Z^{(r)}$ hits zero before $r_0$, then we let $X^{(r)}$ evolve like this reflected diffusion on the ray $\mathcal R_{\theta}$ before hitting the origin, and then start the independent copy $X^{(0)}$ of the Walsh diffusion from there. Formally, let us define 
$$
X^{(r)}(t) := 
\begin{cases}
\theta Z^{(r)}(t),\, t \le \tau_{r, 0};\\
X^{(0)}\left(t - \tau_{r, 0}\right),\, t \ge \tau_{r, 0}.
\end{cases}
$$

\noindent (b) If $\tau_{r, 0} > \tau_{r, r_0}$, that is, $Z^{(r)}$ hits $r_0$ before zero, then we let $X^{(r)}$ evolve like this reflected diffusion on the ray $ \mathcal R_{\theta}$, until it hits $x_0$. Then we start the copy of $X^{(r_0)}$. Let us define 
$$
X^{(r)}(t) := 
\begin{cases}
\theta Z^{(r)}(t),\, t \le \tau_{r, r_0};\\
X^{(r_0)}\left(t - \tau_{r, r_0}\right),\, t \ge \tau_{r, r_0}. 
\end{cases}
$$
Since $r \uparrow r_0$ case (a) is less likely and case (b) is more likely. 
Thus we construct a Walsh diffusion $X^{(r)}$ with initial value $X^{(r)}(0) = (r, \theta) $. We shall evaluate $\,\mathbb E\left[f(X^{(r)}(t))\right] \, =: \,  E(r) + F(r) \,$, where 
\begin{align}
\label{eq:split-into-two}
F(r)  := \mathbb E\left[f(X^{(r)}(t))1\left(\tau_{r, r_0} < \tau_{r, 0}\right)\right] , \quad E(r) := \mathbb E\left[f(X^{(r)}(t))1\left(\tau_{r, r_0} > \tau_{r, 0}\right)\right]  .
\end{align}

Thanks to ~\eqref{eq:convergence-of-indicators} and boundedness of $f$, we immediately see  
\begin{equation}
\label{eq:E(r)}
E(r) \to 0\ \mbox{as}\ r \uparrow r_0\, .
\end{equation}
Let us decompose the term $F(r)$ into two terms: 
\begin{equation}
\label{eq:F(r)}
\begin{split}
F(r) & = \mathbb E\left[f(X^{(r)}(t))1\left(t \le \tau_{r, r_0} < \tau_{r, 0}\right)\right] + 
\mathbb E\left[f(X^{(r)}(t))1\left(\tau_{r, r_0} < \tau_{r, 0}\wedge t\right)\right] \\ & = 
\mathbb E\left[f(\theta Z^{(r)}(t))1\left(t \le \tau_{r, r_0} < \tau_{r, 0}\right)\right] + \mathbb E\left[f(X^{(r_0)}(t - \tau_{r, r_0}))1\left(\tau_{r, r_0} < \tau_{r, 0}\wedge t\right)\right] \\ & =: F_1(r) + F_2(r) \, .
\end{split}
\end{equation}
Next, $F_1(r) \to 0$ as $r \uparrow r_0$, thanks to ~\eqref{eq:convergence-of-indicators} and boundedness of $f$, and 
\begin{equation}
\label{eq:F-2}
F_2(r) - \mathbb E\left[f\left(X^{(r_0)}\left(t - \tau_{r, r_0}\right)\right)\right] \to 0\ \ \mbox{as}\ \ r \uparrow r_0.
\end{equation}
Combine~\eqref{eq:convergence-of-indicators} with a.s. continuity of trajectories of $X^{(r_0)}$, continuity and boundedness of $f$, and use the Lebesgue dominated convergence theorem. Then, as $r \uparrow r_0$, we get 
\begin{equation}
\label{eq:final-conv}
\mathbb E\left[f\left(X^{(r_0)}\left(t - \tau_{r, r_0}\right)\right)\right] \to \mathbb E\left[f\left(X^{(r_0)}(t)\right)\right].
\end{equation}
Combining~\eqref{eq:split-into-two}-
\eqref{eq:final-conv}, we conclude  $\lim_{r \uparrow r_{0}} \mathbb E\left[f(X^{(r)}(t))\right] = \mathbb E\left[f(X^{(r_0)}(t))\right]$.

\medskip

\noindent {\it Case 2.} Assume $x_0 = \theta r_0 \ne \mathbf{0}$, that is, $r_0 > 0$; and $x = \theta r$, with $r \downarrow r_0$; or $x_0 = \mathbf{0}$, that is, $r_0 = 0$; then also we have $x = \theta r$ with $r \downarrow r_0 = 0$. Then the proof is simpler: there is no case (a). Indeed, a reflected diffusion $Z^{(r)}$ on the half-line, starting from $r > r_0$, must hit $r_0$ before hitting zero (or at least at the same time when $r_0 = 0$). The details of the proof are left to the reader. 
\end{proof}

\subsection{Feller property for Euclidean topology} The continuity of the transition kernel in Euclidean topology can be seen as a  corollary of Theorem~\ref{thm:continuous-dependence-on-spinning-measure}, if we take $\mu_n \equiv \mu$ for all $n = 0, 1, 2, \ldots$ 

\begin{lemma}
\label{lemma:Feller-Euclidean}
Under Assumptions \ref{asmp:1}, \ref{asmp:2}, \ref{asmp:3}, take a bounded continuous (in the Euclidean topology) function $f : \mathcal I \to \BR$. Fix a $t > 0$, and let $x \to x_0$ in $\mathcal I$ in Euclidean topology. Then 
$$
\mathbb E_{x}[ f(X(t))]  \to \mathbb E_{x_0}[ f(X(t))] \, .
$$
\end{lemma}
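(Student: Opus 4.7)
The plan is to deduce this lemma directly from Theorem~\ref{thm:continuous-dependence-on-spinning-measure} by specializing to the case of a constant spinning measure. Specifically, take any sequence $x_n \to x_0$ in $\mathcal I$ in the Euclidean topology, and let $X_n$ denote the Walsh diffusion on $\mathcal I$ associated with $(g, \si, \mu)$ starting from $X_n(0) = x_n$. Setting $\mu_n \equiv \mu$ in Theorem~\ref{thm:continuous-dependence-on-spinning-measure} (trivially $\mu_n \Ra \mu_0 = \mu$), and using Assumptions~\ref{asmp:1}, \ref{asmp:2}, \ref{asmp:3}, we obtain $X_n \Ra X_0$ in $C([0, T], \BR^d)$ with respect to the Euclidean supremum norm $\|\cdot\|_T$.

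Next, I would use the continuous mapping theorem. The coordinate projection $\pi_t : C([0, T], \BR^d) \to \BR^d$ defined by $\pi_t(x) := x(t)$ is continuous in the Euclidean topology on both sides, so the weak convergence above implies $X_n(t) \Ra X_0(t)$ as $\BR^d$-valued random variables (with the Euclidean topology). Since $f$ is continuous in the Euclidean topology on $\mathcal I$, composition with $f$ yields $f(X_n(t)) \Ra f(X_0(t))$ in $\BR$.

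Finally, because $f$ is bounded, the portmanteau theorem (or equivalently the bounded convergence theorem applied on a Skorohod-coupled probability space) gives
\[
\mathbb E_{x_n}[f(X(t))] = \mathbb E[f(X_n(t))] \to \mathbb E[f(X_0(t))] = \mathbb E_{x_0}[f(X(t))],
\]
which is exactly the claim. Since the sequence $(x_n)$ was arbitrary, this establishes sequential continuity of $x \mapsto \mathbb E_x[f(X(t))]$ in the Euclidean topology, hence the desired continuity.

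There is no real obstacle here: the entire content is packaged in Theorem~\ref{thm:continuous-dependence-on-spinning-measure}. The only point worth mentioning is the compatibility of topologies: Theorem~\ref{thm:continuous-dependence-on-spinning-measure} provides weak convergence in the Euclidean-norm path space (not the tree-topology path space), which is precisely what the Euclidean continuity of $f$ requires. If one instead attempted to derive Lemma~\ref{lemma:Feller-Euclidean} from Theorem~\ref{thm:Feller}, the argument would fail because tree-continuity of $f$ is strictly stronger than Euclidean-continuity, so Theorem~\ref{thm:continuous-dependence-on-spinning-measure} is genuinely the right input.
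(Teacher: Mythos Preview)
Your argument is correct and matches the paper's approach exactly: the paper explicitly states that Lemma~\ref{lemma:Feller-Euclidean} is a corollary of Theorem~\ref{thm:continuous-dependence-on-spinning-measure} with $\mu_n \equiv \mu$, which is precisely what you do. Your additional remarks on the continuous mapping theorem and the topology compatibility simply fill in the routine details that the paper leaves implicit.
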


\section{Lyapunov Functions and Convergence to the Stationary Distribution}

In this section, we shall find Lyapunov functions for Walsh diffusions to prove  existence and uniqueness of a stationary distribution, and convergence to this stationary distribution as $t \to \infty$. For general Markov processes, the application of Lyapunov functions has been widely studied in the last few decades. Without attempting to provide an exhaustive list of references, let us mention the following papers: \cite{BCG2008, DFG2009,  DMT1995, LMT1996, MT1993a, MT1993b, RT2000}. We have three goals: 

\smallskip

(a) Establish the very fact of long-term convergence of the transition kernel $P^t(x, \cdot)$ to the stationary distribution $\pi(\cdot)$ using Lyapunov functions, in a suitable distance;

\smallskip

(b) Prove that the rate of this convergence is exponential; that is, the distance between $P^t(x, \cdot)$ and $\pi(\cdot)$ is estimated from above as a constant (dependent on $x$) times $e^{-\vk t}$, for some $\vk > 0$;

\smallskip

(c) Estimate the rate $\vk$ of this exponential convergence.

\smallskip

\subsection{Definitions and general results} Let us start with general definitions. Consider an $\BR^d$-valued continuous-time Markov process $X = (X(t),\, t \ge 0)$ with transition kernel $P^t(x, \cdot)$. 

\begin{defn} We say that the process $X$ is {\it ergodic} if there exists a unique stationary distribution $\pi(\cdot)$, and if the transition kernel converges in the total variation norm $\, \lVert \cdot \rVert_{\text{TV}}\,$ to $\, \pi\,$, i.e., 
$$
\lim_{t\to \infty }\norm{P^t(x, \cdot) - \pi(\cdot)}_{\TV} \, =\,  0,\ \ \ \ \mbox{for every}\ \ x \in \BR^d.
$$
\end{defn}

\begin{defn} We say that $X$ is {\it $V$-uniformly ergodic} for a function $V : \BR^d \to [1, \infty)$, if $X$ is ergodic, and there exist constants $K, \vk > 0$ such that for every $t \ge 0,\, x \in \BR^d$, we have:
\begin{equation}
\label{eq:exp-ergodic}
\norm{P^t(x, \cdot) - \pi(\cdot)}_{\TV} \le K \, V(x)e^{-\vk t}.
\end{equation}
For $V \equiv 1$, we say that $X$ is {\it exponentially ergodic}. 
\end{defn}

\begin{asmp} \label{asmp:4} The spinning measure $\mu$ has finite support $\supp\mu = \{\theta_1, \ldots, \theta_p\}$ in $\mathbb S$; that is, the effective state space $\mathcal I^{\mu}$ in ~\eqref{eq:effective-state-space} for this Walsh diffusion is a finite union of rays from the origin 
$$
\mathcal I^{\mu} :=  \bigcup\limits_{i=1}^p\{\, r\theta_i \, \mid 0 \le r < l_i\},\ l_i := \ell(\theta_i),\ i = 1, \ldots, p.
$$
Sometimes, the Walsh diffusion in the finite union $\mathcal I^{\mu}$ of rays  is called a {\it spider}. 
\end{asmp}



\begin{lemma} Under Assumptions \ref{asmp:1}, \ref{asmp:2}, and either \ref{asmp:3} or \ref{asmp:4},  we have the following results. 

\medskip

\noindent (a) Assume there exist a function $V : \mathcal I^{\mu} \to \BR_+$ in the domain of the generator $\CL$  and some positive constants $k, b, r_0$, such that 
\begin{equation}
\label{eq:Lyapunov-ergodicity}
\CL V(x) \le -k + b\, 1_{\mathcal B^{\mu}(r_0)}(x), \quad x \in \mathcal I^{\mu}, 
\end{equation}
\begin{equation}
\label{eq:ball}
\mbox{where}\ \ \mathcal B^{\mu}(r_0) := \{(r, \theta) \in \mathcal I^{\mu}\mid 0 < r \le r_0\}\cup\{\mathbf{0}\}.
\end{equation} 
Then the Walsh diffusion is ergodic. Moreover, for every bounded measurable function $f : \mathcal I^{\mu} \to \BR$, 
\begin{equation}
\label{eq:running-average-convergence-non-R}
\lim\limits_{t \to \infty}\frac1t\int_0^tf(X(u))\,\md u = \int_{\mathcal I^{\mu}}f(x)\pi(\md x).
\end{equation}

\smallskip

\noindent (b) Assume there exist a function $V : \mathcal I^{\mu} \to [1, \infty)$ in the domain of the generator $\CL$ and some positive constants $k, b, r_0$, such that  
\begin{equation}
\label{eq:Lyapunov-uniform-ergodicity}
\CL V(x) \le -kV(x) + b\, 1_{\mathcal B^{\mu}(r_0)}(x), \quad x \in \mathcal I^{\mu} .  
\end{equation}
Then the Walsh diffusion is $V$-uniformly ergodic, and $\int_{\mathcal I^{\mu}}V(x)\pi(\md x) < \infty$. 
\label{lemma:Lyapunov-exponent}
\end{lemma}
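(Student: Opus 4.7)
The plan is to reduce both statements to the standard Foster--Lyapunov machinery of Meyn--Tweedie \cite{MT1993a,MT1993b,DMT1995}, which applies to a $\phi$-irreducible, aperiodic Markov process once one has (i) a Lyapunov drift condition, and (ii) the fact that the ``small'' set appearing on the right-hand side of the drift condition is \emph{petite}. The Lyapunov conditions are given to us, so the real work is to check the abstract hypotheses on the Walsh diffusion itself.

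First I would verify $\phi$-irreducibility and aperiodicity with respect to the reference measure $\tilde\mu = \mu\otimes\mes$ from \eqref{eq:reference-measure}. Theorem~\ref{thm:positivity} gives $P^{t}(x,C)>0$ for every $x\in\mathcal I^{\mu}$, every Borel $C$ with $\tilde\mu(C)>0$, and every $t>0$; this is $\tilde\mu$-irreducibility, and it also yields strong aperiodicity (there is no cyclic decomposition because positivity holds for every $t$). Next I would show that the set $\mathcal B^{\mu}(r_{0})$ from \eqref{eq:ball} is petite. Under Assumption~\ref{asmp:4} (finite support) the set $\mathcal B^{\mu}(r_{0})$ is compact in the tree topology, while under Assumption~\ref{asmp:3} compactness in the Euclidean topology combined with Lemma~\ref{lemma:Feller-Euclidean} plays the same role. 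Fix a $t_{0}>0$; the Feller property (Theorem~\ref{thm:Feller} or Lemma~\ref{lemma:Feller-Euclidean}) shows that $x\mapsto P^{t_{0}}(x,C)$ is lower semicontinuous for every open $C$, and Theorem~\ref{thm:positivity} shows this function is strictly positive whenever $\tilde\mu(C)>0$. A standard covering argument on the compact set $\mathcal B^{\mu}(r_{0})$ then produces a constant $c>0$ and a nontrivial measure $\nu$ concentrated on some $\tilde\mu$-positive Borel set such that $P^{t_{0}}(x,\cdot)\ge c\,\nu(\cdot)$ for all $x\in\mathcal B^{\mu}(r_{0})$; this is exactly the small (hence petite) set property.

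For part (a), the drift inequality \eqref{eq:Lyapunov-ergodicity} together with the petiteness of $\mathcal B^{\mu}(r_{0})$ is precisely the hypothesis of \cite[Theorem~4.1 or 6.1]{MT1993b}, which yields a unique invariant probability measure $\pi$ and the total variation convergence $\norm{P^{t}(x,\cdot)-\pi}_{\TV}\to 0$ for every initial point. The ergodic theorem \eqref{eq:running-average-convergence-non-R} for bounded measurable $f$ then follows from the standard pathwise ergodic theorem for positive Harris processes as in \cite{MT1993b}. For part (b), the geometric drift condition \eqref{eq:Lyapunov-uniform-ergodicity}, combined with the petiteness of $\mathcal B^{\mu}(r_{0})$, puts us in the setting of the Down--Meyn--Tweedie theorem \cite[Theorem~5.2]{DMT1995}, which gives constants $K,\vk>0$ for which \eqref{eq:exp-ergodic} holds, and automatically produces $\int V\,\md\pi<\infty$. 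The finiteness of $\int V\,\md\pi$ can also be seen directly: apply Dynkin's formula to $V$ at the hitting times of $\mathcal B^{\mu}(r_{0})$ and integrate \eqref{eq:Lyapunov-uniform-ergodicity} against $\pi$, using $\int \CL V\,\md\pi=0$ to conclude $k\int V\,\md\pi\le b\,\pi(\mathcal B^{\mu}(r_{0}))<\infty$.

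The main obstacle is the petite set verification, because the Walsh diffusion has a delicate structure at the origin (switching between rays via the local-time apportionment \eqref{eq:rays-mu}) and the tree topology is strictly stronger than the Euclidean one. One must be careful that the compact Euclidean neighborhood of the origin used in the covering argument is genuinely mapped, at time $t_{0}$, to a measure with uniformly positive mass on a common $\tilde\mu$-positive test set; this requires simultaneous use of Theorem~\ref{thm:positivity} (which is the pointwise positivity statement) and the Feller continuity of Section~5 (which promotes pointwise lower bounds to a uniform lower bound on compacta). A secondary, purely technical, point is that $V$ must actually lie in the domain of $\CL$ in the sense of \eqref{eq:class-of-functions}, i.e., its radial derivatives at the origin must integrate to zero against $\mu$; this is a condition on the admissible Lyapunov functions rather than on the proof, and it should be verified whenever Lemma~\ref{lemma:Lyapunov-exponent} is invoked in the sequel.
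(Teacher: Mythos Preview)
Your proposal is correct and follows essentially the same route as the paper's own proof: split into the two cases (Assumption~\ref{asmp:3} with Euclidean topology, Assumption~\ref{asmp:4} with tree topology), use compactness of $\mathcal B^{\mu}(r_0)$ in the appropriate topology together with the corresponding Feller property (Lemma~\ref{lemma:Feller-Euclidean} or Theorem~\ref{thm:Feller}) and the positivity Theorem~\ref{thm:positivity} to obtain petiteness, and then invoke the Meyn--Tweedie machinery. The only difference is packaging: the paper delegates the step ``Feller plus positivity plus compactness implies petite'' to an external reference (\cite[Lemma~2.3, Proposition~2.2, Theorem~2.6]{MyOwn10}), whereas you sketch the covering argument explicitly.
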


\begin{rmk} A function $V$ which satisfies~\eqref{eq:Lyapunov-ergodicity} or~\eqref{eq:Lyapunov-uniform-ergodicity} is called a {\it Lyapunov function} in the literature cited in the beginning of this section. 
\end{rmk}

\begin{proof} {\it Case 1.} First, we work under Assumptions \ref{asmp:1}, \ref{asmp:2}, \ref{asmp:3}. Then we operate in Euclidean topology. The set $\mathcal B^{\mu}(r_0)$ from~\eqref{eq:ball} is compact. The Walsh diffusion is Feller continuous from Lemma~\ref{lemma:Feller-Euclidean}, and has the positivity property from Theorem~\ref{thm:positivity}. 

\medskip

{\it Case 2.} Next, we work under Assumptions \ref{asmp:1}, \ref{asmp:2}, \ref{asmp:4}. Then we operate in the tree-topology. By Remark~\ref{rmk:compact-tree}, the set $\mathcal B^{\mu}(r_0)$ from~\eqref{eq:ball} is compact in the tree-topology. The Walsh diffusion is Feller continuous from Theorem~\ref{thm:Feller}, and has the positivity property from Theorem~\ref{thm:positivity}. 

\medskip

Finally, for both cases, the rest of the proof of (b) follows from \cite[Lemma 2.3, Theorem 2.6]{MyOwn10}, and the rest of the proof of (a) follows from \cite[Proposition 2.2]{MyOwn10} and \cite[Theorem 5.1]{MT1993b}. 
\end{proof}

\subsection{An example of convergence} Let us provide an example with explicit conditions on the drift and diffusion coefficients $g$ and $\si$.

\begin{lemma} Under Assumptions \ref{asmp:1}, \ref{asmp:2}, and either \ref{asmp:3} or \ref{asmp:4},  

\smallskip

\noindent (a) the Walsh diffusion is ergodic, if 
\begin{equation}
\label{eq:condition-on-g}
\varlimsup\limits_{r \to \infty}\sup\limits_{\theta \in \mathbb S}g(r, \theta) =: -\ol{g} < 0. 
\end{equation}

\smallskip

\noindent (b)  If, in addition, the following condition 
\begin{equation}
\label{eq:condition-on-sigma}
\varlimsup\limits_{r \to \infty}\sup\limits_{\theta \in \mathbb S}\si(r, \theta) =: \ol{\si} < \infty,
\end{equation}
holds, then the Walsh diffusion is $V$-uniformly ergodic with $V = V_{\la}(r, \theta) := e^{\la r} $ for some $ \la > 0$.

\label{lemma:example}
\end{lemma}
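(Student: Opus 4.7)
The plan is to deduce both parts of the lemma from Lemma~\ref{lemma:Lyapunov-exponent} by exhibiting a radial Lyapunov function $V(r,\theta) = \phi(r)$ tailored to each case. Because $V$ depends only on the radial coordinate, the gluing condition $\int_{\mathbb S} V'(0+,\theta)\,\mu(\md\theta) = 0$ defining the domain $\mathcal D_{\mathcal I,\mu}$ in~\eqref{eq:class-of-functions} collapses to the single requirement $\phi'(0+) = 0$, and the generator acts by $\mathcal L V(r,\theta) = g(r,\theta)\phi'(r) + \tfrac12\sigma^2(r,\theta)\phi''(r)$. The task is then to choose $\phi$ so that this expression obeys the appropriate drift inequality uniformly in $\theta$.

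For part~(a), I will take a $C^2$ function $\phi:[0,\infty)\to[0,\infty)$ with $\phi'(0) = 0$, bounded second derivative, and $\phi'(r)\equiv 1$, $\phi''(r)\equiv 0$ for all $r$ beyond some threshold $R_1$; a smoothing near $r = R_1$ of the piecewise formula $\phi(r) = r^2/(2R_1)$ on $[0,R_1]$, $\phi(r) = r - R_1/2$ on $[R_1,\infty)$ will do. Using~\eqref{eq:condition-on-g} I can enlarge $R_1$ so that $g(r,\theta) \le -\bar g/2$ for all $r \ge R_1$ and $\theta \in \mathbb S$, giving $\mathcal L V \le -\bar g/2$ off $\mathcal B^{\mu}(R_1)$. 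Inside $\mathcal B^{\mu}(R_1)$, local boundedness from Assumption~\ref{asmp:1} bounds $g$ and $\sigma^2$, while $\phi'$ and $\phi''$ are bounded by construction, so $\mathcal L V$ is bounded above by some constant $M$ there. The resulting inequality $\mathcal L V(x) \le -\bar g/2 + (M + \bar g/2)\, 1_{\mathcal B^{\mu}(R_1)}(x)$ matches~\eqref{eq:Lyapunov-ergodicity}, and Lemma~\ref{lemma:Lyapunov-exponent}(a) delivers ergodicity.

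For part~(b), I will take $V(r,\theta) := \cosh(\lambda r) \ge 1$ with $\lambda > 0$ to be chosen, so that $V'(0+) = 0$ and $V \in \mathcal D_{\mathcal I,\mu}$. Using $\sinh(\lambda r) = \cosh(\lambda r) - e^{-\lambda r}$ one rewrites
\begin{equation*}
\mathcal L V(r,\theta) = \cosh(\lambda r)\Big[\lambda g(r,\theta) + \tfrac{\lambda^2}{2}\sigma^2(r,\theta)\Big] - \lambda g(r,\theta)\, e^{-\lambda r}.
\end{equation*}
For $r$ sufficiently large, \eqref{eq:condition-on-g}--\eqref{eq:condition-on-sigma} bound the bracket above by $\tfrac{\lambda}{2}\bigl[-\bar g + \lambda(\bar\sigma+1)^2\bigr]$, which is strictly negative once $\lambda < \bar g/(\bar\sigma+1)^2$; the remainder $-\lambda g(r,\theta) e^{-\lambda r}$ is uniformly bounded on $\mathcal I$ by local boundedness of $g$ and the decay of $e^{-\lambda r}$. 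Fixing such a $\lambda$ yields $\mathcal L V(x) \le -k V(x) + b\, 1_{\mathcal B^{\mu}(r_0)}(x)$ for suitable $k, b, r_0 > 0$, matching~\eqref{eq:Lyapunov-uniform-ergodicity}; Lemma~\ref{lemma:Lyapunov-exponent}(b) then furnishes $V$-uniform ergodicity, and since $\tfrac12 e^{\lambda r} \le \cosh(\lambda r) \le e^{\lambda r}$ the stated conclusion with $V_\lambda(r,\theta) = e^{\lambda r}$ follows up to adjusting the multiplicative constant.

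The conceptual obstacle in both parts is the boundary constraint $\phi'(0+) = 0$: the naive candidates $\phi(r) = r$ and $V = e^{\lambda r}$ are excluded from $\mathcal D_{\mathcal I,\mu}$ because their radial derivatives at the origin integrate to a nonzero value against $\mu$, so one must deform the asymptotic profile near the origin (replacing linear by quadratic, or exponential by $\cosh$) without spoiling the drift inequality at infinity. Once this deformation is absorbed into the compact set $\mathcal B^\mu(r_0)$, the remaining estimates reduce to routine applications of Assumption~\ref{asmp:1} together with~\eqref{eq:condition-on-g}--\eqref{eq:condition-on-sigma}.
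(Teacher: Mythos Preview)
Your strategy is the same as the paper's: build a radial Lyapunov function with $\phi'(0+)=0$ so that the gluing condition in~\eqref{eq:class-of-functions} is automatic, then feed the resulting drift inequality into Lemma~\ref{lemma:Lyapunov-exponent}. Part~(a) matches the paper almost verbatim; the paper uses a fixed $C^\infty$ cutoff $\phi$ with $\phi(r)=0$ for $r\le 1$ and $\phi(r)=r$ for $r\ge 2$, but your smoothed parabola works just as well.

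For part~(b) you choose $V(r)=\cosh(\lambda r)$, whereas the paper takes $V(r)=\exp(\lambda\phi(r))$ with the same cutoff $\phi$ as in~(a). Both are legitimate, but your argument has a gap: you assert that the remainder $-\lambda g(r,\theta)e^{-\lambda r}$ is \emph{uniformly bounded on $\mathcal I$} by ``local boundedness of $g$ and the decay of $e^{-\lambda r}$''. This does not follow. Condition~\eqref{eq:condition-on-g} bounds $g$ only from \emph{above}; Assumption~\ref{asmp:1} gives no global lower bound, so for instance $g(r,\theta)=-e^{2\lambda r}$ is permitted, and then $-\lambda g\,e^{-\lambda r}=\lambda e^{\lambda r}\to\infty$. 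The fix is easy: drop the $\sinh=\cosh-e^{-\lambda r}$ decomposition and instead use directly that for $r\ge r_0$ one has $g\le -\bar g/2<0$ and $\sinh(\lambda r)>0$, giving
\[
\mathcal L V \;\le\; -\tfrac{\lambda\bar g}{2}\sinh(\lambda r)+\tfrac{\lambda^2}{2}(\bar\sigma+1)^2\cosh(\lambda r)
\;=\;\cosh(\lambda r)\Bigl[-\tfrac{\lambda\bar g}{2}\tanh(\lambda r)+\tfrac{\lambda^2}{2}(\bar\sigma+1)^2\Bigr],
\]
and then enlarge $r_0$ so that $\tanh(\lambda r_0)$ is close enough to~$1$. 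The paper's choice $V=e^{\lambda\phi(r)}$ sidesteps the issue entirely: for $r\ge 2$ one has $V'=\lambda V$, $V''=\lambda^2 V$ exactly, so $\mathcal L V = V\bigl[\lambda g+\tfrac{\lambda^2}{2}\sigma^2\bigr]$ with no remainder, and the bracket is handled in one line.
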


\begin{proof} The proof is similar to that from \cite[Theorem 3.2]{MyOwn12}. Take a $C^{\infty}$ nondecreasing function $\phi : \BR_+ \to \BR_+$ such that 
$$
\phi(x) = 
\begin{cases}
0,\ \ x \le 1;\\
x,\ \ x \ge 2.
\end{cases}
$$
An example of such function can be found in \cite[section 3.2]{MyOwn12}. For (a), try $V(r, \theta) = \phi(r)$. This function satisfies condition~\eqref{eq:class-of-functions}, because $V'_r(0+, \theta) = \phi'(0) = 0$. It is also continuous in the Euclidean topology, which is the additional condition in Remark~\ref{rmk:generator-domain-Euclidean}. Plug into~\eqref{eq:explicit-generator} and get
\begin{equation}
\label{eq:phi-simplify}
\mbox{for}\ \ r \ge 2,\ \ \phi'(r) = 1,\ \ \mbox{and}\ \ \phi''(r) = 0 , 
\end{equation}
and hence, $\CL V(r, \theta) = g(r, \theta)$. It follows from~\eqref{eq:condition-on-g} that there exist $r_1, b > 0$ such that $g(r, \theta) \le -b$ for $r \ge r_1$. Therefore, for $r \ge r_0 := r_1\vee 2$, we get $\CL V(r, \theta) \le -b$. In addition, the function $\CL V$ is continuous and therefore bounded on $\mathcal B^{\mu}(r_0)$. Thus we have~\eqref{eq:Lyapunov-ergodicity}. Apply Lemma~\ref{lemma:Lyapunov-exponent} (a) to complete the proof of Lemma~\ref{lemma:example} (a). The proof of~\eqref{eq:running-average-convergence-non-R} follows from \cite[Theorem 8.1(a)]{MT1993a}. 

\smallskip

For (b), try $V(r, \theta) = \exp(\la\phi(r))$. Similarly, this function satisfies $V'_r(0+, \theta) = 0$ and therefore~\eqref{eq:class-of-functions}; and on top of this,  $V$ is also continuous in the Euclidean topology. Using~\eqref{eq:phi-simplify}, for $r \ge 2$, we have: $V'(r, \theta) = \la V(r, \theta)$, and $V''(r, \theta) = \la^2V(r, \theta)$. Thus 
\begin{equation}
\label{eq:generator-applied}
\CL V(r, \theta) = \left[g(r, \theta)\la + \frac12\si^2(r, \theta)\la^2\right]V(r, \theta)\ \ \mbox{for}\ \ r \ge 2.
\end{equation}
Now, from~\eqref{eq:condition-on-g} and~\eqref{eq:condition-on-sigma}, there exist constants $\ol{g}, \ol{\si} > 0$ and an $r_1 > 0$ such that $
g(r, \theta) \le -\ol{g} < 0,\ \ \si(r, \theta) \le \ol{\si},\ \ r \ge r_1.
$
Then, for $r \ge r_1$, $\la = \ol{g}\cdot\ol{\si}^{-2}$, we have 
\begin{equation}
\label{eq:coefficient-negative}
g(r, \theta)\la + \frac12\si^2(r, \theta)\la^2 \le -\ol{g}\la + \frac{\ol{\si}^2}2\la^2 \le -\frac{\ol{g}^2}{2\ol{\si}^2} =: -k < 0.
\end{equation}
Comparing~\eqref{eq:generator-applied} with~\eqref{eq:coefficient-negative}, we get $\CL V(r, \theta) \le -kV(r, \theta)$ for $\theta \in \supp\mu$ and  $r \ge r_1\vee 2 =: r_0$. Similarly to (a), we get that $\CL V$ is continuous and therefore bounded on $\mathcal B^{\mu}(r_0)$.  Therefore, we obtain~\eqref{eq:Lyapunov-uniform-ergodicity}. Apply Lemma~\ref{lemma:Lyapunov-exponent} (b) to complete the proof of Lemma~\ref{lemma:example} (b).
\end{proof}

\subsection{Explicit rate of exponential convergence} In a fairliy general setting, we can estimate the rate $\vk$ of exponential convergence from~\eqref{eq:exp-ergodic}. It is hard to estimate this rate for diffusions. Let us informally explain the difficulty: Let $\CL$ be the generator of a certain Markov process on $\BR^d$. Assume we have a Lyapunov function $V$ in the domain of this generator which satisfies
\begin{equation}
\label{eq:Lyapunov-general}
\CL V(x) \le -kV(x) + b\,1_C(x)\ \mbox{for all}\ x.
\end{equation}
Here, $k, b > 0$ are some constants, and $C$ is a ``small'' set. There is actually a precise meaning of the term {\it small set} in this theory, which was developed in \cite{DMT1995, MT1993a, MT1993b}. For our purposes, it is sufficient to let $C$ be a compact set, as follows from \cite[Lemma 2.3, Proposition 2.6]{MyOwn10}. One would like to infer an explicit value of the constant $\vk$ in~\eqref{eq:exp-ergodic} from the constants in~\eqref{eq:Lyapunov-general}. As mentioned in the Introduction, however, it turns out to be very hard, in general, see for example~\cite{Davies, Explicit, RR1996, RT2000}, since $\vk$ depends in a complicated way on $k, b, C$, and the transition kernel $P^t(x, \cdot)$. 

\smallskip

However, such estimates are much easier if the Markov process is on the half-line $\BR_+$, is stochastically ordered, and the ``exceptional set'' $C = \{0\}$ in the formula~\eqref{eq:Lyapunov-general} for a Lyapunov function. Alternatively, the stochastic process itself might not be stochastically ordered, but is stochastically dominated by a stochastically ordered Markov process with a Lyapunov function with $C = \{0\}$. This was done by  the coupling method in \cite{LMT1996} for some processes, including reflected diffusions, and in \cite{MyOwn12} for reflected jump-diffusions. 

\smallskip

Here, we are able to adjust the coupling techniques used in \cite{LMT1996, MyOwn12} for the case of Walsh diffusions, by dominating the radial component of the Walsh diffusion by a stochastically ordered reflected diffusion on $\BR_+$. The rest of this section closely follows the ideas of \cite{LMT1996, MyOwn12}. 

\smallskip

Recall $f : \BR_+ \to \BR$ is called {\it locally Lipschitz} if for every $R_0 > 0$ there exists a $C(R_0) > 0$ such that $
|f(r_1) - f(r_2)| \le C(R_0)|r_1 - r_2|,\ \ r_1, r_2 \in [0, C(R_0)].
$ Impose the following assumption.

\begin{asmp} The drift coefficient $g(r, \theta)$ is dominated by 
$
g(r, \theta) \le \ol{g}(r),\, (r, \theta) \in \mathcal I\setminus\{0\}, 
$
where $\ol{g}(r)$ is independent of $\theta$, and the diffusion coefficient $\si$ is itself angular-independent:
$$
\si(r, \theta) = \ol{\si}(r),\, (r, \theta) \in \mathcal I\setminus\{0\}.
$$
Here, $\ol{g}, \ol{\si} : \BR_+ \to \BR$ are assumed to be locally Lipschitz continuous, and so is $g(\cdot, \theta)$ for each $\theta$. 
\label{asmp:domination}
\end{asmp}

\begin{thm} (a) Under Assumption~\ref{asmp:domination}, suppose there exists a constant $k > 0$ and a nondecreasing $C^2$ function $V : \BR_+ \to [1, \infty)$ such that 
\begin{equation}
\label{eq:Lyapunov-explicit}
\ol{g}(r)V'(r) + \frac12\ol{\si}^2(r)V''(r) \le - kV(r),\ r > 0.
\end{equation}
Then for every two points $x_1, x_2 \in \mathcal I$, and every $t > 0$, we have 
\begin{equation}
\label{eq:distance-V}
\norm{P^t(x_1, \cdot) - P^t(x_2, \cdot)}_V \le \left(V(x_1) + V(x_2)\right)e^{-kt}.
\end{equation}
%
%
(b) If, in addition to (a), this Walsh diffusion is ergodic, and $\pi$ is its stationary distribution, satisfying $(\pi, V) < \infty$, then this Walsh diffusion is $V$-uniformly ergodic with $\vk := k$ in~\eqref{eq:exp-ergodic}. 
\label{thm:explicit-rate}
\end{thm}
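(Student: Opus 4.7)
The plan is to adapt the coalescence coupling strategy of \cite{LMT1996, MyOwn12} to Walsh diffusions. Under Assumption~\ref{asmp:domination}, the angular-independent diffusion $\ol{\si}$ lets both coupled Walsh diffusions $X_1, X_2$ be driven by a common Brownian motion $W$ in the Skorohod decomposition~\eqref{eq:defn-Walsh-diffusion}, while the pointwise drift bound $g(\cdot, \theta) \le \ol{g}(\cdot)$ combined with the 1D SDE comparison theorem yields pathwise domination $\norm{X_i(t)} \le Y_i(t)$ by reflected diffusions $Y_i$ on $\BR_+$ with drift $\ol{g}$ and diffusion $\ol{\si}^2$, starting from $Y_i(0) = \norm{x_i}$. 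On a single probability space I would construct these so that (i) the monotone coupling $Y_1 \le Y_2$ holds (WLOG $\norm{x_1} \le \norm{x_2}$), and (ii) common spinning at the origin after both $X_1, X_2$ simultaneously meet there produces a coalescence time $T$ with $X_1(t) = X_2(t)$ for all $t \ge T$.

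The central estimate applies the It\^o--Tanaka formula~\eqref{eq:IT} to the radial function $V(\norm{X_i(t)})$. Using $V' \ge 0$, $g \le \ol{g}$, $\si^2 = \ol{\si}^2$, and the Lyapunov inequality~\eqref{eq:Lyapunov-explicit},
\[
\md V(\norm{X_i(t)}) \le -k\, V(\norm{X_i(t)})\, \md t + \md M_i(t) + V'(0)\, \md \Lambda^{\norm{X_i}}(t),
\]
where $M_i$ is a local martingale. Stopping at $\tau_i := \inf\{t > 0 : X_i(t) = \mathbf{0}\}$ discards the nonnegative local time term (which does not accrue while $X_i$ is away from the origin), so $e^{kt} V(\norm{X_i(t \wedge \tau_i)})$ is a true supermartingale, yielding $\mathbb{E}[V(\norm{X_i(t)}) \mathbf{1}_{\{t < \tau_i\}}] \le V(\norm{x_i}) e^{-kt}$. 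Extending this estimate to the coalescence time $T$ uses the monotone coupling $Y_1 \le Y_2$: the upper dominating process $Y_2$ does not touch the origin before meeting $Y_1$ (else $Y_1 = 0 = Y_2$, meaning the pair has already met), so its local time contribution vanishes on the relevant sub-interval, while the lower process' local time is handled via $V(Y_1) \le V(Y_2)$. The outcome is $\mathbb{E}[V(\norm{X_i(t)}) \mathbf{1}_{\{t < T\}}] \le V(\norm{x_i}) e^{-kt}$ for $i = 1, 2$.

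With this in hand, for any Borel $|f| \le V$, the coalescence $X_1(t) = X_2(t)$ on $\{t \ge T\}$ yields
\[
|\mathbb{E}_{x_1} f(X(t)) - \mathbb{E}_{x_2} f(X(t))| \le \mathbb{E}\bigl[\bigl(V(\norm{X_1(t)}) + V(\norm{X_2(t)})\bigr)\, \mathbf{1}_{\{t < T\}}\bigr] \le (V(x_1) + V(x_2)) e^{-kt},
\]
proving~\eqref{eq:distance-V} after taking supremum over such $f$. For part (b), integrating~\eqref{eq:distance-V} against $\pi$ via $\pi(\cdot) = \int P^t(y, \cdot) \pi(\md y)$ and the triangle inequality for the $V$-norm gives
\[
\norm{P^t(x, \cdot) - \pi(\cdot)}_V \le \int_{\mathcal{I}} \norm{P^t(x, \cdot) - P^t(y, \cdot)}_V\, \pi(\md y) \le (V(x) + (\pi, V)) e^{-kt},
\]
which, since $V \ge 1$ and $(\pi, V) < \infty$, is bounded by $(1 + (\pi, V)) V(x) e^{-kt}$, establishing $V$-uniform ergodicity~\eqref{eq:exp-ergodic} with rate $\vk = k$ and constant $K = 1 + (\pi, V)$. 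The hard part will be rigorously controlling the nonnegative local time term $V'(0) \Lambda^{\norm{X_i}}(t)$ past the first hit $\tau_i$, since it grows in time and would destroy the exponential decay if not absorbed by the coupling structure; the key technical ingredient inherited from \cite{LMT1996, MyOwn12} is precisely the stochastic monotonicity of the dominating reflected diffusions, which localizes local-time accrual to the lower process where it can be absorbed into the $V$-norm bound via the nondecreasing property of $V$.
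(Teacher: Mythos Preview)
Your proposal is correct and follows essentially the same route as the paper: couple $X_1,X_2$ through a common driving Brownian motion, dominate their radial parts pathwise by monotonely coupled reflected diffusions $Y_1\le Y_2$ (the paper's $S_1\le S_2$) on $\BR_+$, use the first hitting time of $0$ by the larger one as the coalescence time, and then invoke the supermartingale estimate from \cite{LMT1996,MyOwn12}; part~(b) is identical. The one cosmetic difference is that the paper never writes It\^o--Tanaka for $V(\norm{X_i})$ but passes immediately to $V(S_i)$ via $\norm{X_i}\le S_i$ and $V$ nondecreasing, which sidesteps the Walsh local-time term entirely---a slightly cleaner path to the same inequality (and note that your claimed bound $\mathbb E[V(\norm{X_1(t)})\mathbf 1_{\{t<T\}}]\le V(\norm{x_1})e^{-kt}$ via $V(Y_1)\le V(Y_2)$ actually yields $V(\norm{x_2})e^{-kt}$, though this does not affect the final statement).
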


\begin{rmk}
Somewhat abusing the notation, we will refer to $V$ sometimes as a function $V : \BR^d \to \BR$, and sometimes as a function $V : \BR_+ \to \BR$. 
\end{rmk}

\begin{proof} Part (a) {\it Step 1.} Similarly to \cite{LMT1996, MyOwn12}, we couple four processes: two copies $X_1$ and $X_2$ of the Walsh diffusion associated with $(g, \si, \mu)$, starting from $X_i(0) = x_i,\, i = 1, 2$, and two copies $S_1$ and $S_2$ of a reflected diffusion on $\BR_+$ with coefficients $\ol{g}(\cdot)$ and $\ol{\si}(\cdot)$, starting from 
$S_i(0) = \norm{x_i}$, $ i = 1, 2$,
so that the following pathwise comparison holds:
\begin{equation}
\label{eq:correct-coupling}
\norm{X_i(t)} \le S_i(t),\ i = 1, 2.
\end{equation}
Assume also $S_1$ and $S_2$ have the same driving Brownian motion. That is, there exists a standard Brownian motion $B = (B(t),\, t \ge 0)$, such that 
\begin{equation}
\label{eq:SDE-R}
\md S_i(t) = \ol{g}(S_i(t))\,\md t + \ol{\si}(S_i(t))\,\md B(t) + \md L_i(t),\, i = 1, 2.
\end{equation}
Here, $L_i = (L_i(t),\, t \ge 0),\, i = 1, 2$, are  some continuous adapted nondecreasing real-valued processes, with $L_i(0) = 0$, such that $L_i$ can increase only when $S_i = 0$, $i = 1, 2$. We shall show at the end of this proof how to construct such coupling. 

\smallskip

{\it Step 2.} Assume we have already constructed the coupling with all aforementioned properties. Then we can quickly prove the statement of Theorem~\ref{thm:explicit-rate}(a). Assume without loss of generality that $\norm{x_1} \le \norm{x_2}$. Then using the  standard comparison techniques for diffusions, we get 
\begin{equation}
\label{eq:comp-S}
S_1(t) \le S_2(t),\, t \ge 0.
\end{equation}

Define the stopping time $\tau := \inf\{t \ge 0\mid S_2(t) = 0\}.$
By~\eqref{eq:correct-coupling} and~\eqref{eq:comp-S}, we have  
$$
\norm{X_1(\tau)} = \norm{X_2(\tau)} = 0\ \Ra\ X_1(\tau) = X_2(\tau) = \mathbf{0}.
$$
Therefore, $\tau$ is a coupling time for $X_1$ and $X_2$. Using the standard trick, we assume $X_1(t) = X_2(t)$ a.s. for $t > \tau$. Then for a function $f : \mathcal I \to \BR$ such that $|f| \le V$, we have 
\begin{align}
\label{eq:main-523}
\begin{split}
& \left|\mathbb E [ f(X_1(t))]  - \mathbb E [f(X_2(t))] \right| = \left|\mathbb E \left[ f(X_1(t))1_{\{\tau > t\}}\right] - \mathbb E\left[ f(X_2(t))1_{\{\tau > t\}}\right]\right| \\ & \le \mathbb E[ |f(X_1(t))|1_{\{\tau > t\}}] + \mathbb E [|f(X_2(t))|1_{\{\tau > t\}}]  \le \mathbb E [V(X_1(t))1_{\{\tau > t\}} ] + \mathbb E [V(X_2(t))1_{\{\tau > t\}}] \\ & \le \mathbb E [V(S_1(t))1_{\{\tau > t\}}] + \mathbb E [V(S_2(t))1_{\{\tau > t\}}].
\end{split}
\end{align}

\smallskip

Combining~\eqref{eq:main-523} with~\eqref{eq:Lyapunov-explicit}, we get as in \cite{MyOwn12}:
$$
\left|\mathbb E [f(X_1(t))] - \mathbb E [f(X_2(t))] \right| \le \left(V(x_1) + V(x_2)\right)e^{-kt}.
$$
Taking supremum over all functions $f : \mathcal I \to \BR$ such that $|f| \le V$, we complete the proof of (a). 

\smallskip

{\it Step 3.} It remains to show that a coupling of $X_1, X_2, S_1, S_2$ which satisfies ~\eqref{eq:correct-coupling},~\eqref{eq:SDE-R} exists. First, our goal is to construct two Walsh diffusions $X_1$ and $X_2$ associated with $(g, \si, \mu)$, starting from $X_i(0) = x_i$, with the {\it same driving Brownian motion} $B$. That is, $X_1$ and $X_2$ need to satisfy 
\begin{equation}
\label{eq:same-BM}
\md\norm{X_i(t)} = g(X_i(t))\,\md t + \si(X_i(t))\,\md B(t) + \md L^{\norm{X_i}}(t),\ i = 1, 2;\ t \ge 0.
\end{equation}
To this end, take a probability space $(\Oa, \CF, \MP)$ with infinitely many i.i.d. copies 
$$
W^{(n)} = (W^{(n)}(t),\, t \ge 0),\ \ n = 0, 1, 2, \ldots
$$
of a Walsh diffusion associated with $(g, \si, \mu)$, starting from the origin: $W^{(n)}(0) = \mathbf{0}$; as well as yet another independent standard Brownian motion $\ol{B} = (\ol{B}(t),\, t \ge 0)$. For each Walsh diffusion $W^{(n)}$, we can write a representation in terms of stochastic differential equation 
$$
\md\norm{W^{(n)}(t)} = g\left(W^{(n)}(t)\right)\,\md t + \si\left(W^{(n)}(t)\right)\,\md B_n(t) + \md L^{\norm{W_n}}(t),
$$
where $B_n = (B_n(t),\, t \ge 0),\, n = 1, 2, \ldots$ are i.i.d. standard Brownian motions. Let $x_i = \ol{r}_i\ol{\theta}_i,\, i = 1, 2$. If $\ol{r}_1 = 0$ or $\ol{r}_2 = 0$, then let $\tau_0 := 0$. Assume now $\ol{r}_1 > 0$ and $\ol{r}_2 > 0$. For $i = 1, 2$, consider strong solutions $\ol{S}_i$ of a one-dimensional SDE with drift coefficient $g(\cdot, \ol{\theta}_i)$ and diffusion coefficient $\si^2(\cdot, \ol{\theta}_i)$, starting from 
$\ol{S}_i(0) = \ol{r}_i$, driven by Brownian motion $\ol{B}$:
$$
\md \ol{S}_i(t) = g(\ol{S}_i, \ol{\theta}_i)\,\md t + \ol{\si}(\ol{S}_i)\,\md\ol{B}(t),\, t \le \ol{\tau}_i := \inf\{t \ge 0\mid \ol{S}_i(t) = 0\}.
$$
Because the drift coefficient $g(\cdot, \ol{\theta}_i)$ and the diffusion coefficient 
$\si(\cdot, \ol{\theta}_i)$ are locally Lipschitz continuous, this strong solution exists and is unique. By~\eqref{eq:Lyapunov-explicit} and Assumption~\ref{asmp:domination} it follows that this process is non-explosive, at least not until it hits zero. Define
$$
X_i(t) = \ol{\theta}_i \, \ol{S}_i(t),\ i = 1, 2;\ t < \tau_0;
$$
where the first stopping time is defined as $
\tau_0 :=  \inf\{t \ge 0\mid X_1(t) = \mathbf{0}\ \mbox{or}\ X_2(t) = \mathbf{0}\} = \ol{\tau}_1\wedge\ol{\tau}_2 $. 
Thus we defined $X_1(t)$ and $X_2(t)$ for $t \le \tau_0$ so that~\eqref{eq:same-BM} is satisfied with $B(t) := \ol{B}(t)$ for $t \le \tau_0$. 


\smallskip

{\it Step 4.} Next, we construct a sequence $(\tau_n)_{n \ge 0}$ of stopping times such that  $
\tau_0 \le \tau_1 \le \tau_2 \le \ldots $, 
and define $X_1$ and $X_2$ inductively on each $[\tau_{2k}, \tau_{2k+2}]$ so that~\eqref{eq:same-BM} holds with 
\begin{equation}
\label{eq:zeroes}
X_1(\tau_{2k}) = \mathbf{0},\ X_2(\tau_{2k+1}) = \mathbf{0},\ k \ge 0; \quad 
\tau_{\infty} := \lim\limits_{k \to \infty}\tau_k\, . 
\end{equation}
Therefore, we shall construct $X_1$ and $X_2$ on $[0, \tau_{\infty}]$ so that~\eqref{eq:same-BM} and~\eqref{eq:zeroes} hold. In Step 6, we construct $X_1(t)$ and $X_2(t)$ for $t > \tau_{\infty}$, if $\tau_{\infty} < \infty$.

\smallskip

Assume we have already defined $X_1$ and $X_2$ on $[0, \tau_{2n}]$, so that~\eqref{eq:same-BM} and~\eqref{eq:zeroes} hold for $k = 0, \ldots, n$. In the case $
X_1(\tau_{2n}) = X_2(\tau_{2n}) = \mathbf{0}, $  we let $\tau_{k} := \tau_{2n}$ for $k \ge 2n$. Next, assume
$$
X_1\left(\tau_{2n}\right) = \mathbf{0},\ \ X_2\left(\tau_{2n}\right) \ne \mathbf{0}.
$$
Let $X_2(\tau_{2n}) = \rho_{2n}\theta_{2n}$. Construct a strong solution $S_{2n+1} = (S_{2n+1}(t),\, t \ge 0)$ of a one-dimensional SDE with coefficients $g(\cdot, \theta_{2n})$ and  $\si(\cdot)$, starting from $S_{2n+1}(0) = \rho_{2n}$, until it hits zero: 
$$
\md S_{2n+1}(t) = g(S_{2n+1}(t), \theta_{2n})\,\md t + \ol{\si}(S_{2n+1}(t))\,\md B_{2n+1}(t),\ t \le \tau'_{2n+1} := \inf\{t \ge 0\mid S_{2n+1}(t) = 0\}. 
$$
By local Lipschitz continuity of the coefficients $g(\theta_{2n}, \cdot)$ and $\si(\cdot)$, this strong solution exists and is unique. From~\eqref{eq:Lyapunov-explicit} and Assumption~\ref{asmp:domination}, it follows that this process is non-explosive, at least not until it hits zero. Define $\tau_{2n+1} := \tau_{2n} + \tau'_{2n+1}$, and 
$$
X_1\left(t + \tau_{2n}\right) := W^{(2n+1)}(t),\, \quad  X_2\left(t + \tau_{2n}\right) := S_{2n+1}(t),\, \quad t \le \tau'_{2n+1}.
$$
This defines $X_1$ and $X_2$ on the next time interval $[\tau_{2n}, \tau_{2n+1}]$ such that~\eqref{eq:same-BM} is satisfied. The common driving Brownian motion $B$ is defined as
$$
B(t) := B\left(\tau_{2n}\right) + B_{2n+1}\left(t - \tau_{2n}\right),\, t \in [\tau_{2n}, \tau_{2n+1}].
$$
If we have $ X_1\left(\tau_{2n+1}\right) = X_2\left(\tau_{2n+1}\right) = \mathbf{0} $, we let $\tau_{k} := \tau_{2n+1}$ for $k \ge 2n+1$. Otherwise, we repeat the construction above with $X_1$ and $X_2$ swapped, with obvious changes. This allows us to construct $X_1$ and $X_2$ on $[\tau_{2n+1}, \tau_{2n+2}]$ so that~\eqref{eq:same-BM} and~\eqref{eq:zeroes} hold for $k \le 2n+2$. The common driving Brownian motion $B$ is defined as
$$
B(t) := B\left(\tau_{2n+1}\right) + B_{2n+2}\left(t - \tau_{2n+1}\right),\ t \in [\tau_{2n+1}, \tau_{2n+2}].
$$
By induction, we have defined $X_1(t)$ and $X_2(t)$ for $t \le \tau_{\infty}$.

\smallskip

{\it Step 5.} It follows from~\eqref{eq:zeroes} that $X_1\left(\tau_{\infty}\right) = X_2\left(\tau_{\infty}\right) = \mathbf{0}  $. It suffices to define $\,X_{i}\,$, $\,i =1, 2\,$ on $\,[\tau_{\infty}, \infty)\,$ if $\,\tau_{\infty} < \infty\,$. 
Assume $\tau_{\infty} < \infty$ and 
define $X_1(t)$ and $X_2(t)$ for $t \ge \tau_{\infty}$ by 
$
X_1\left(t + \tau_{\infty}\right) = X_2\left(t + \tau_{\infty}\right) = W^{(0)}(t)$, $t \ge 0$.
The common driving Brownian motion $B$ is defined on $[\tau_{\infty}, \infty)$ as
$$
B(t) := B\left(\tau_{\infty}\right) + B_0\left(t - \tau_{\infty}\right).
$$
This completes the construction of two copies $X_1$ and $X_2$ of the Walsh diffusion, associated with $(g, \si, \mu)$, starting from $X_i(0) = x_i$, $i = 1, 2$, such that~\eqref{eq:same-BM} holds. 

\smallskip

{\it Step 6.} Finally, we construct strong versions $S_1$ and $S_2$ of a reflected diffusion with drift coefficient $\ol{g}$ and diffusion coefficient $\ol{\si}$, starting from $S_i(0) = \ol{r}_i$, with driving Brownian motion $B$, as in~\eqref{eq:SDE-R}. This is possible by classic results, because $\ol{g}$ and $\ol{\si}$ are locally Lipschitz continuous. By standard comparison techniques, we have~\eqref{eq:correct-coupling}. 

\smallskip

Part (b) Take a function $f : \mathcal I \to \BR$ with $|f| \le V$. Then 
$X = (X(t),\, t \ge 0)$ satisfies
$$
\left|\mathbb E_{x_1}[f(X(t))] - \mathbb E_{x_2}[f(X(t))]\right| \le \left(V(x_1) + V(x_2)\right)e^{-kt},\ x_1, x_2 \ge 0.
$$
Integrate over $\mathcal I$ with respect to $x_2 \sim \pi$. Then we have:
\begin{align*}
\left|\mathbb E_{x_1}[f(X(t))] - (\pi, f)\right| &= \left| \mathbb E_{x_1}[ f(X(t)) ] - \int_{\mathcal I}\mathbb E_{x_2}[f(X(t))]\,\pi(\md x_2)\right| \\ & \hspace{-1cm}  
\le \int_{\mathcal I}\left|\mathbb E_{x_1}[f(X(t))] - \mathbb E_{x_2}[f(X(t))]\right|\,\pi(\md x_2) \le e^{-kt}\left(V(x_1) + \int_{\mathcal I}V(x_2)\pi(\md x_2)\right) \\ & \le e^{-kt}\left(V(x_1) + (\pi, V)\right) \le e^{-kt}(1 + (\pi, V))V(x_1). 
\end{align*}
\end{proof}

\begin{rmk}
In Assumption~\ref{asmp:domination}, we imposed Lipschitz continuity assumption only to guarantee strong existence and pathwise uniqueness. It is well known, however, that these existence and uniqueness results hold under weaker conditions. In this case, our result also holds. 
\end{rmk}

Let us present some corollaries. Under assumptions of Theorem~\ref{thm:explicit-rate}, define 
\begin{equation}
\label{eq:K-explicit}
K(x, \la) := \ol{g}(x)\la + \ol{\si}^2(x)\frac{\la^2}2.
\end{equation}
The following result is proved similarly to \cite[Theorem 4.3, Corollary 5.2]{MyOwn12}.

\begin{cor} Under Assumption~\ref{asmp:domination}, suppose there exist $k,\, \la > 0$ such that 
\begin{equation}
\label{eq:K}
\sup\limits_{x > 0}K(x, \la) =: -k < 0.
\end{equation}
Then the Walsh diffusion $X$ is $V(r) := e^{\la r}$-uniformly ergodic with $\vk := k$, and the stationary distribution $\pi$ satisfies $(\pi, V) < \infty$. 
\end{cor}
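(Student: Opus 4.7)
The plan is to apply Theorem~\ref{thm:explicit-rate}(b) with the choice $V(r) := e^{\la r}$. Since $V'(r) = \la V(r)$ and $V''(r) = \la^2 V(r)$, the Lyapunov inequality~\eqref{eq:Lyapunov-explicit} collapses to the algebraic identity $\ol{g}(r)V'(r) + \tfrac12 \ol{\si}^2(r)V''(r) = K(r, \la)\, V(r) \le -k\, V(r)$, which holds directly from the hypothesis $\sup_{x>0} K(x, \la) = -k$. Theorem~\ref{thm:explicit-rate}(a) therefore already supplies the contraction $\lVert P^t(x_1, \cdot) - P^t(x_2, \cdot)\rVert_V \le (V(x_1)+V(x_2))e^{-kt}$ at the claimed rate $\vk = k$, so all that remains for part~(b) is to produce the stationary distribution $\pi$ and to bound $(\pi, V) < \infty$.

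The main obstacle is that $V(r) = e^{\la r}$ is not itself in the domain $\mathcal D_{\mathcal I, \mu}$ from~\eqref{eq:class-of-functions}: its radial derivative $V'(0+, \theta) = \la$ is nonzero, so Lemma~\ref{lemma:Lyapunov-exponent}(b) does not apply to $V$ directly. To go around this I would borrow the mollification used in the proof of Lemma~\ref{lemma:example}: pick a smooth nondecreasing $\phi : \BR_+ \to \BR_+$ with $\phi \equiv 0$ on $[0, 1]$ and $\phi(r) = r$ for $r \ge 2$, and set $\tilde V(r) := e^{\la \phi(r)}$. Then $\tilde V'(0+, \theta) = \la\phi'(0) = 0$, so $\tilde V \in \mathcal D_{\mathcal I, \mu}$, and $\tilde V \ge 1$. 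For $r \ge 2$ one has $\tilde V = V$ and $\tilde V' > 0$, so Assumption~\ref{asmp:domination} combined with the identity above yields $\CL \tilde V(r, \theta) \le K(r, \la)\tilde V(r) \le -k \tilde V(r)$; on the bounded set $\{r \le 2\}$ the expression $\CL \tilde V$ is bounded in absolute value by continuity of $g,\si,\tilde V',\tilde V''$. Hence the Lyapunov drift inequality~\eqref{eq:Lyapunov-uniform-ergodicity} holds with $r_0 = 2$ and some $b > 0$, and Lemma~\ref{lemma:Lyapunov-exponent}(b) delivers both ergodicity (so that $\pi$ exists and is unique) and $(\pi, \tilde V) < \infty$.

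To close the argument, observe that $\phi(r) \ge r - 2$ for every $r \ge 0$, so $V(r) \le e^{2\la}\tilde V(r)$ pointwise, and consequently $(\pi, V) \le e^{2\la}(\pi, \tilde V) < \infty$. Both hypotheses of Theorem~\ref{thm:explicit-rate}(b) are now in place, and it yields the claimed $V$-uniform ergodicity with $\vk = k$ in~\eqref{eq:exp-ergodic}.
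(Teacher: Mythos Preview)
Your proof is correct. The paper itself gives no argument beyond the sentence ``proved similarly to \cite[Theorem 4.3, Corollary 5.2]{MyOwn12},'' and your route---check~\eqref{eq:Lyapunov-explicit} for $V=e^{\la r}$ directly from $K(\cdot,\la)\le -k$, mollify to $\tilde V=e^{\la\phi}$ exactly as in the proof of Lemma~\ref{lemma:example} so as to land in the domain~\eqref{eq:class-of-functions}, invoke Lemma~\ref{lemma:Lyapunov-exponent}(b) for ergodicity and $(\pi,\tilde V)<\infty$, then conclude via Theorem~\ref{thm:explicit-rate}(b)---is precisely the natural completion of that sketch.

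Two small points worth tightening. First, when you bound $\CL\tilde V$ on $\{r\le 2\}$ ``by continuity of $g$,'' note that Assumption~\ref{asmp:domination} does not give continuity of $g$ in $\theta$; however, since $\tilde V'\ge 0$ and $g(r,\theta)\le\ol g(r)$, you have $\CL\tilde V(r,\theta)\le \ol g(r)\tilde V'(r)+\tfrac12\ol\si^2(r)\tilde V''(r)$, a continuous function of $r$ alone and hence bounded above on $[0,2]$, which is all that~\eqref{eq:Lyapunov-uniform-ergodicity} requires. Second, Lemma~\ref{lemma:Lyapunov-exponent} is stated under Assumptions~\ref{asmp:1},~\ref{asmp:2} and one of~\ref{asmp:3} or~\ref{asmp:4}, while the corollary lists only Assumption~\ref{asmp:domination}; those extra hypotheses must be read as standing implicitly. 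This is a loose end in the paper's exposition rather than in your argument.
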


\begin{rmk} Under Assumption~\ref{asmp:domination}, suppose 
$$
\sup\limits_{x \in \mathcal I\setminus\{0\}}\ol{g}(x) =: -\tilde{g} < 0,\ \ \sup\limits_{x \in \mathcal I\setminus\{0\}}\ol{\si}(x) =: \tilde{\si} < \infty.
$$
Similarly to \cite[Corollary 4.4]{MyOwn12}, we can show that~\eqref{eq:K} holds with $\la := \tilde{g}/\tilde{\si}$. Then, the Walsh diffusion $X$ is $V(r) := e^{\la r}$-uniformly ergodic with rate $\vk := k$ of exponential convergence from~\eqref{eq:K}. 
\end{rmk}

\begin{exm} [Continuing from Example \ref{ex:1}] Consider a Walsh diffusion $X$ associated with $(g, \si, \mu)$, such that the drift and diffusion coefficients are constant:
$$
g(r, \theta) \equiv g < 0,\ \ \si(r, \theta) = \si > 0.
$$
Then we can take $\ol{g}(r) := g$, and $\ol{\si}(r) := \si$ in the Assumption~\ref{asmp:domination}. The expression $K$ in ~\eqref{eq:K-explicit} is reduced to 
$K(x, \la) = g\la + \frac{\si^2}2\la^2,$
and is minimized at $\la = - g/\si^{2}$, with minimum $-k$, where 
\begin{equation}
\label{eq:explicit-const}
k := \frac{g^2}{2\si^2}.
\end{equation}
This is an estimate of the exponential rate $\vk$ of convergence. Actually, this estimate is exact. Indeed, the radial component $\norm{X(\cdot)}$ of such Walsh diffusion is a reflected Brownian motion on the half-line with drift $g$ and diffusion $\si^2$, and such process is known from \cite{LMT1996} to have exact rate of exponential convergence as in~\eqref{eq:explicit-const}. 
\end{exm}

We can apply this result to (non-reflected) diffusions on the real line. 

\begin{cor} \label{cor:DiffusionR} 
Take a solution $X = (X(t),\, t \ge 0)$ of an SDE on the real line $\BR$ with drift coefficient $g$ and diffusion coefficient $\si^2$.
Suppose it is ergodic. Assume that  $g$ and $\si$ are locally Lipschitz, and $\si(x) = \si(-x)$ for all $x \in \BR$. Suppose there exists a $C^2$ nondecreasing function $V : \BR_+ \to [1, \infty)$ and a constant $k > 0$ such that 
$$
\ol{g}(r)V'(r) + \frac12\si^2(r)V''(r) \le -kV(r),\ \mbox{where}\, \, \,  \ol{g}(r) := g(r)\vee(-g(-r)),\ r > 0;
$$
Finally, assume that the stationary distribution $\pi$ satisfies $(\pi, V) < \infty$. Then $X$ is $V$-uniformly ergodic with rate of convergence $\vk = k$. 
\end{cor}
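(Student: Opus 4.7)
The plan is to realize $X$ as a Walsh diffusion on $\BR$ with two rays and a symmetric spinning measure, and then invoke Theorem~\ref{thm:explicit-rate}(b). Take $\mathcal I = \BR$ (so $\mathbb S = \{-1, +1\}$, $\ell \equiv \infty$), spinning measure $\mu := \tfrac12(\delta_{+1} + \delta_{-1})$, and under the identification $r\cdot\theta \leftrightarrow \theta r$ for $(r, \theta) \in \BR_+\times\{\pm 1\}$, define the Walsh coefficients by $g_W(r, +1) := g(r)$, $g_W(r, -1) := -g(-r)$, and $\sigma_W(r, \theta) := \sigma(r)$; the last expression is unambiguous thanks to the symmetry $\sigma(x) = \sigma(-x)$. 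For any $f \in C^2(\BR)$, a direct computation from~\eqref{eq:derivative-at-origin} gives $f'(0+, +1) = f'(0)$ and $f'(0+, -1) = -f'(0)$, whence $\int_{\mathbb S}f'(0+, \theta)\mu(\md\theta) = 0$ automatically; thus $f \in \mathcal D_{\mathcal I, \mu}$, and the generator formula~\eqref{eq:explicit-generator} applied to the Walsh diffusion yields exactly $g(x)f'(x) + \tfrac12\sigma^2(x)f''(x)$, the generator of $X$. Since a one-dimensional diffusion is characterized in law by its action on smooth functions, the Walsh diffusion we just built coincides in law with $X$.

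Now Assumption~\ref{asmp:domination} holds in this setup: by definition, $\ol g(r) = g(r)\vee(-g(-r))$ dominates both $g_W(r, +1) = g(r)$ and $g_W(r, -1) = -g(-r)$; $\sigma_W$ is angular-independent with $\ol\sigma = \sigma$; local Lipschitz continuity of $\ol g$ follows from that of $g$ (the maximum of two locally Lipschitz functions is locally Lipschitz), and similarly for the remaining ingredients. The hypothesized inequality of the corollary is then precisely~\eqref{eq:Lyapunov-explicit} with constant $k$.

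With all hypotheses of Theorem~\ref{thm:explicit-rate}(b) in place—$V$-Lyapunov inequality, ergodicity of the Walsh diffusion (inherited from the assumed ergodicity of $X$), and the integrability $(\pi, V) < \infty$ after reading $V$ on $\BR$ as $V(|\cdot|)$—the theorem delivers $V$-uniform ergodicity with rate $\varkappa = k$, which is the conclusion of the corollary. The only conceptually nontrivial step is the first one: recognizing that a non-skew diffusion on $\BR$ is a Walsh diffusion with the symmetric two-point spinning measure $\tfrac12(\delta_++\delta_-)$, which forces the local-time splitting at the origin to be balanced; once that identification is in hand, the remainder is a routine translation between the two formalisms.
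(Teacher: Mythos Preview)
Your proof is correct and follows the same approach as the paper: identify the real-line diffusion as a Walsh diffusion (spider) on two rays with the uniform spinning measure $\mu = \tfrac12(\delta_{+1}+\delta_{-1})$, check that Assumption~\ref{asmp:domination} is satisfied with $\ol g(r)=g(r)\vee(-g(-r))$ and $\ol\sigma=\sigma$, and apply Theorem~\ref{thm:explicit-rate}(b). Your write-up is more detailed than the paper's---you explicitly verify the generator identity, the local Lipschitz property of $\ol g$, and note the reading of $V$ as $V(|\cdot|)$---but the substance is the same.
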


\begin{proof} Follows from Theorem~\ref{thm:explicit-rate} and the observation that the real line can be thought of as a ``spider" with two rays, corresponding to North and South Pole, i.e., $\theta_1$ and $\theta_2$. Then the process $X$ becomes a spider associated with $(\tilde{g}, \si, \mu)$, where $\mu$ is a uniform measure on $\{\theta_1, \theta_2\}$, and 
$ \tilde{g}(r, \theta_i) := g(r) \cdot 1_{\{i =1 \}} - g(-r) \cdot 1_{\{i = 2\}} $, $r \ge 0$. 
\end{proof}

\begin{exm}[Bang-bang drifts \cite{KaratzasShreve}]
Consider a diffusion on $\mathbb R$ 
with drift and diffusion coefficients 
$ g(x) = - g_1 \cdot 1_{\{ x > 0\}} + g_2 \cdot 1_{\{x \le 0\}}$, with constants $g_1, g_2 > 0$, and $\sigma (x) \equiv 1$, $x \in \mathbb R$. 
Then we can take $\ol{g}(r) = g_1\wedge g_2$, and by Corollary \ref{cor:DiffusionR} it is $V$-uniformly ergodic with $\lambda = g_1\wedge g_2$ and rate  $\vk = k = (g_1\wedge g_2)^2/2$ of exponential convergence. 
\end{exm}

\section{Appendix}

\begin{lemma} \label{lemma:piecewise-functions}
Take two sequences $(f_n)_{n \ge 0}$ and $(g_n)_{n \ge 0}$ of continuous functions $[0, T] \to \BR^d$. Assume $t_n \in [0, T]$ are such that $f_n(t_n) = g_n(0) = \mathbf{0}$ for $n = 0, 1, 2, \ldots$ and $t_n \to t_0$. Assume $f_n \to f_0$ and $g_n \to g_0$ uniformly on $[0, T]$. Define the new sequence of functions $h_n : [0, T] \to \BR^d$:
$$
h_n(t) := 
\begin{cases}
f_n(t),\ t \in [0, t_n];\\
g_n(t - t_n),\ t \in [t_n, T].
\end{cases}
$$
Then $h_n \to h_0$ uniformly on $[0, T]$. 
\end{lemma}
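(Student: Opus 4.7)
The plan is to fix $\varepsilon>0$, choose $N$ large enough using the three given convergences, and then split the interval $[0,T]$ into at most three pieces determined by $\min(t_n,t_0)$ and $\max(t_n,t_0)$. On the two ``outer'' pieces, where $h_n$ and $h_0$ are defined by the same family ($f$'s or $g$'s), the uniform convergences $f_n\to f_0$ and $g_n\to g_0$, together with uniform continuity of $f_0$ and $g_0$ on $[0,T]$ (to absorb the time-shift $|t_n-t_0|$ appearing in the argument of $g_0$), will give control. On the remaining ``middle'' piece, whose length is $|t_n-t_0|$, we exploit the matching boundary values: passing to the limit in $f_n(t_n)=g_n(0)=\mathbf{0}$ yields $f_0(t_0)=g_0(0)=\mathbf{0}$, so both $h_n$ and $h_0$ are close to $\mathbf{0}$ throughout that piece, by continuity of $f_0$ at $t_0$ and of $g_0$ at $0$.

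Concretely, first I would fix $\varepsilon>0$ and use the uniform continuity of $f_0$ and $g_0$ on $[0,T]$ to choose $\delta>0$ so that $\|f_0(s)-f_0(s')\|<\varepsilon$ and $\|g_0(s)-g_0(s')\|<\varepsilon$ whenever $|s-s'|<\delta$. Then I would pick $N$ so that for $n\ge N$ we have $\|f_n-f_0\|_T<\varepsilon$, $\|g_n-g_0\|_T<\varepsilon$, and $|t_n-t_0|<\delta$. The identities $f_0(t_0)=\mathbf{0}$ and $g_0(0)=\mathbf{0}$ follow from
\[
\|f_0(t_0)\|\le \|f_0(t_0)-f_0(t_n)\|+\|f_0(t_n)-f_n(t_n)\|\le \omega(f_0,|t_n-t_0|,[0,T])+\|f_n-f_0\|_T\to 0,
\]
and similarly for $g_0(0)$.

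Consider, say, the case $t_n\le t_0$ (the other case is symmetric). For $t\in[0,t_n]$ both functions are from the $f$-family, so $\|h_n(t)-h_0(t)\|=\|f_n(t)-f_0(t)\|<\varepsilon$. For $t\in[t_0,T]$ both are from the $g$-family and
\[
\|h_n(t)-h_0(t)\|=\|g_n(t-t_n)-g_0(t-t_0)\|\le \|g_n-g_0\|_T+\omega(g_0,|t_n-t_0|,[0,T])<2\varepsilon.
\]
For $t$ in the middle piece $[t_n,t_0]$, use $h_n(t)=g_n(t-t_n)$ and $h_0(t)=f_0(t)$, together with $g_0(0)=f_0(t_0)=\mathbf{0}$:
\[
\|h_n(t)\|\le \|g_n(t-t_n)-g_0(t-t_n)\|+\|g_0(t-t_n)-g_0(0)\|<2\varepsilon,
\quad \|h_0(t)\|=\|f_0(t)-f_0(t_0)\|<\varepsilon,
\]
since $0\le t-t_n\le t_0-t_n<\delta$ and $|t-t_0|\le t_0-t_n<\delta$. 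Combining the three estimates gives $\|h_n-h_0\|_T\le 3\varepsilon$ for $n\ge N$.

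The main (and essentially only) obstacle is the middle piece, where $h_n$ and $h_0$ are defined by different branches; the fix is the observation that $f_0(t_0)=g_0(0)=\mathbf{0}$, which is forced by passing to the limit in the pinning condition $f_n(t_n)=g_n(0)=\mathbf{0}$. Everything else is a routine triangle-inequality argument using uniform convergence and uniform continuity on the compact interval $[0,T]$.
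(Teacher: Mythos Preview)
Your proof is correct and follows essentially the same approach as the paper's: both reduce (by symmetry) to the case $t_n \le t_0$, split $[0,T]$ into the three pieces $[0,t_n]$, $[t_n,t_0]$, $[t_0,T]$, and handle the outer pieces via uniform convergence of $f_n$ and $g_n$ (plus uniform continuity of $g_0$ for the time-shift) and the middle piece via the key observation $f_0(t_0)=g_0(0)=\mathbf{0}$. The only difference is presentational---you run an explicit $\varepsilon$--$N$ argument and isolate the identity $f_0(t_0)=g_0(0)=\mathbf{0}$ at the outset, whereas the paper shows each of the three maxima tends to zero separately---but the underlying argument is the same.
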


\begin{proof} Without loss of generality, assume $t_n \uparrow t_0$, as $\,n \to \infty\,$. Otherwise, we can switch from $t_n$ to $T - t_n$ and from $h_n(t)$ to $h_n(T - t)$. Then we can estimate the maximum difference 
\begin{align*}
&\max\limits_{t \in [0, T]}\norm{h_n(t) - h_0(t)} \\ & \le \max\limits_{t \in [0, t_n]}\norm{h_n(t) - h_0(t)} + \max\limits_{t \in [t_n, t_0]}\norm{h_n(t) - h_0(t)}  + \max\limits_{t \in [t_0, T]}\norm{h_n(t) - h_0(t)} \\ & = \max\limits_{t \in [0, t_n]}\norm{f_n(t) - f_0(t)} + \max\limits_{t \in [t_n, t_0]}\norm{g_n(t - t_n) - f_0(t)} +  
\max\limits_{t \in [t_0, T]}\norm{g_n(t - t_n) - g_0(t - t_0)}.
\end{align*}

By uniform convergence $f_n \to f_0$, the first term can be estimated as
\begin{equation}
\label{eq:term-1}
\max\limits_{t \in [0, t_n]}\norm{f_n(t) - f_0(t)} \le \max\limits_{t \in [0, t_0]}\norm{f_n(t) - f_0(t)} \to 0.
\end{equation}
The second term can be estimated as
\begin{equation}
\label{eq:term-2}
\max\limits_{t \in [t_n, t_0]}\norm{g_n(t - t_n) - f_0(t)} \le \max\limits_{s \in [0, t_0 - t_n]}\norm{g_n(s)} + \max\limits_{t \in [t_n, t_0]}\norm{f_0(t)}.
\end{equation}
Note that $\norm{g_n} \to \norm{g_0}$ uniformly on $[0, T]$, and $t_0 - t_n \to 0$ as $n \to \infty$. Thus, 
\begin{equation}
\label{eq:term-21}
\max\limits_{s \in [0, t_0 - t_n]}\norm{g_n(s)} \to \norm{g_0(0)} = 0.
\end{equation}
Since $f_0(t_0) = \mathbf{0}$, $t_n \to t_0$, and $\norm{f_0}$ is continuous, the second term in the right-hand side of~\eqref{eq:term-2} converges to zero, as $\,n \to \infty\,$. Combining this with~\eqref{eq:term-21}, we get  
\begin{equation}
\label{eq:term-2-to-0}
\max\limits_{t \in [t_n, t_0]}\norm{g_n(t - t_n) - f_0(t)} \to 0.
\end{equation}

Thirdly, the third term can be estimated by 
\begin{equation}
\label{eq:term-3}
\max\limits_{t \in [t_0, T]}\norm{g_n(t - t_n) - g_0(t - t_0)} \le \max\limits_{t \in [t_0, T]}\norm{g_n(t - t_n) - g_0(t - t_n)} + \max\limits_{t \in [t_0, T]}\norm{g_0(t - t_n) - g_0(t - t_0)}.
\end{equation}
The first term in the right-hand side of~\eqref{eq:term-3} can be estimated as
\begin{equation}
\label{eq:term-31}
\max\limits_{t \in [t_0, T]}\norm{g_n(t - t_n) - g_0(t - t_n)}  \le \max\limits_{s \in [0, T]}\norm{g_n(s) - g_0(s)} \to 0.
\end{equation}
The second term in the right-hand side of~\eqref{eq:term-3} also tends to zero as $n \to \infty$, because $g_0$ is continuous, and therefore uniformly continuous on $[0, T]$, while $t_n \to t_0$. Combining this observation with~\eqref{eq:term-31}, we get that 
\begin{equation}
\label{eq:term-3-to-0}
\max\limits_{t \in [t_0, T]}\norm{g_n(t - t_n) - g_0(t - t_0)} \to 0.
\end{equation}

Finally, combining~\eqref{eq:term-1},~\eqref{eq:term-2-to-0},~\eqref{eq:term-3-to-0}, we complete the proof of Lemma~\ref{lemma:piecewise-functions}. 
\end{proof}


\begin{lemma} For a Walsh diffusion $X$ without drift from Case 3 of the proof of Theorem~\ref{thm:positivity}, we have $\mathbb P\left(\tau^X_{R, A} < t\right) > 0$, where $\tau^{X}_{R, A}$ is defined in \eqref{eq:stopping-times-1}.
\label{lemma:intermediate-positivity}
\end{lemma}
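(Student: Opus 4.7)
The plan is to use the Dambis--Dubins--Schwartz-type time change $X(s)=W(T(s))$ introduced in Case 3 of the proof of Theorem~\ref{thm:positivity}, where $W$ is a Walsh Brownian motion with spinning measure $\mu$ starting from $\mathbf{0}$ and $T(s)=\int_0^s\sigma^2(X(v))\,\md v$. Since $\sigma>0$ and $X$ does not explode, $T$ is a strictly increasing continuous bijection of its time-domain, and consequently $\tau^X_{R,A}=T^{-1}(\tau^W_{R,A})$. The strategy has two prongs: (i) show that $\tau^W_{R,A}$ is small with positive probability, an essentially Brownian statement; and (ii) use uniform bounds on $\sigma$ near the origin to translate ``$\tau^W_{R,A}$ small'' into ``$\tau^X_{R,A}$ small.''

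First I would pin down uniform bounds on $\sigma$. Using $R<R'<l_{\min}=\inf_{\theta\in A}\ell(\theta)$, pick a measurable $\hat R:\mathbb S\to(0,\infty)$ with $\hat R(\theta)=R'$ on $A$ and $\hat R(\theta)<\ell(\theta)$ for all $\theta\in\mathbb S$, and set
\[
K:=\{r\theta:0<r\le\hat R(\theta)\}\cup\{\mathbf{0}\}\subseteq\mathcal I.
\]
By Assumption~\ref{asmp:1} there exist constants $0<\underline\sigma\le\bar\sigma<\infty$ with $\underline\sigma\le\sigma(x)\le\bar\sigma$ for every $x\in K$.

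Next I would build a favorable event on the $W$-side. Let $\sigma^W_R:=\inf\{u\ge0:\norm{W(u)}=R\}$ and let $\Theta^*$ denote the direction of the Walsh Brownian motion excursion that contains $\sigma^W_R$. Because $\norm{W}$ is reflected Brownian motion started at $0$, one has $\MP(\sigma^W_R<u)>0$ for every $u>0$, and by the Walsh structure of $W$ the direction $\Theta^*$ has law $\mu$ and is independent of $\norm{W}$. Fixing $u^*:=\underline\sigma^2 t/2$ and setting
\[
E:=\{\sigma^W_R<u^*,\;\Theta^*\in A\},\qquad \MP(E)=\mu(A)\cdot\MP(\sigma^W_R<u^*)>0,
\]
one sees that on $E$ we have $\tau^W_{R,A}\le\sigma^W_R<u^*$ and $\norm{W(u)}\le R<R'$ for $u\in[0,\sigma^W_R]$.

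Finally I would transfer this to $X$. On $E$ the trajectory satisfies $\norm{X(s)}=\norm{W(T(s))}\le R$ for $s\in[0,\tau^X_{R,A}]$; combined with the non-explosion of $X$ in $\mathcal I$ and the choice of $\hat R$, this forces $X(s)\in K$ on that interval, so $\sigma(X(s))\ge\underline\sigma$ and $T(s)\ge\underline\sigma^2 s$. Consequently
\[
\tau^X_{R,A}=T^{-1}(\tau^W_{R,A})\le\tau^W_{R,A}/\underline\sigma^2<u^*/\underline\sigma^2=t/2<t,
\]
giving $\MP(\tau^X_{R,A}<t)\ge\MP(E)>0$. The delicate step is this last one: before its decisive excursion on direction $\Theta^*\in A$, the auxiliary process $W$ may perform many small excursions on rays $\theta\notin A$ where $\ell(\theta)$ can be small, and verifying that the corresponding $X$-trajectory stays inside $K$ requires a careful use of non-explosion (so $X$ cannot cross $\hat R(\theta)$ on such a ray) or an additional restriction of $E$ to also control those preceding excursions.
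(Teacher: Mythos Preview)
Your approach is essentially the paper's: time-change $X=W\circ T$ to a Walsh Brownian motion, build a favourable event for $W$, and transfer back via bounds on $\sigma$. The one difference is that the paper's favourable event is two-sided, $\{\tau^W_{R,A}<c_1t<c_2t<\tau^W_{R'}\}$; the extra condition $\tau^W_{R'}>c_2t$ confines $\|W\|$ (and hence $\|X\|$) to the closed ball $\overline D$, and then two-sided bounds $c_1\le\sigma^2\le c_2$ on $\overline D$ together with a short sandwich argument give $\tau^X_{R,A}<t<\tau^X_{R'}$. Your one-sided event $E$ already yields $\|X(s)\|\le R$ on $[0,\tau^X_{R,A}]$, so you need only the lower bound $\sigma\ge\underline\sigma$ there---this is a legitimate simplification of the paper's argument.

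Regarding the ``delicate step'' you flag: the paper does not treat rays $\theta\notin A$ with small $\ell(\theta)$ any more carefully than you do. It simply asserts that the closed ball $\overline D$ of radius $R$ lies inside $\mathcal I$, so that Assumption~\ref{asmp:1} gives the required $\sigma$-bounds on the whole ball. Under that same assertion your argument goes through verbatim with $\hat R(\theta)\equiv R$ and no further work is needed.
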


\begin{proof}[Proof of Lemma \ref{lemma:intermediate-positivity}] 
By Assumption \ref{asmp:1}, the functions $\si$ and $\si^{-1}$ are bounded on the (open) ball $D$ with radius $R$, which, together with its closure $\ol{D}$, lies inside $I$. This implies 
\begin{equation}
\label{eq:estimates-on-sigma}
0 < c_1 \le \si^2(x) \le c_2 < \infty\ \mbox{for all}\ \ x \in \ol{D}.
\end{equation}
Next, we can estimate the probability from Lemma~\ref{lemma:intermediate-positivity} as follows: 
\begin{equation}
\label{eq:first-estimate-of-hitting-time}
\mathbb P\left(\tau^X_{R, A} < t\right) \ge \mathbb P\left(\tau^X_{R, A} < t,\, \tau^X_{R^\prime} > t\right).
\end{equation}
It follows from~\eqref{eq:estimates-on-sigma} and~\eqref{eq:expression-for-time-change} that 
\begin{equation}
\label{eq:growth-of-time-change}
c_1t \le T(t) \le c_2t,\ \ \mbox{for}\ \ t \in [0, \tau^X_{R^\prime}].
\end{equation}
Also, from definitions of hitting times~\eqref{eq:stopping-times-1} -
~\eqref{eq:stopping-times-3}, 
it immediately follows that
\begin{equation}
\label{eq:time-change-and-hitting-times}
T\left(\tau^X_{R'}\right) = \tau^W_{R'},\ \ T\left(\tau^X_{R, A}\right) = \tau^W_{R, A}.
\end{equation}
The estimate~\eqref{eq:growth-of-time-change} and the relations~\eqref{eq:time-change-and-hitting-times}, in turn, imply that the inverted time-change $T^{-1}(s) := \inf\{t \ge 0\mid T(t) \ge s\}$ satisfies
\begin{equation}
\label{eq:growth-of-inverted-time-change}
c_2^{-1}s \le T^{-1}(s) \le c_1^{-1}s,\ \ \mbox{for}\ \ s \in [0, \tau^W_{R^\prime}].
\end{equation}

Let us show that 
\begin{equation}
\label{eq:comparison-of-events}
\left\{\tau^W_{R, A} < c_1t < c_2t < \tau^W_{R^\prime}\right\} \subseteq \left\{\tau^X_{R, A} < t < \tau^X_{R^\prime}\right\}.
\end{equation}
Indeed, assume the event in the left-hand side of~\eqref{eq:comparison-of-events} has happened. Then from~\eqref{eq:time-change-and-hitting-times}, applying the inverted time-change $T^{-1}$, we get 
\begin{equation}
\label{eq:new-event}
\tau^X_{R, A} < T^{-1}(c_1t) < T^{-1}(c_2t) < \tau^X_{R^\prime}.
\end{equation}
Applying~\eqref{eq:growth-of-inverted-time-change} to~\eqref{eq:new-event}, we have 
$ T^{-1}(c_1t) \le t,\ \ T^{-1}(c_2t) \ge t$.
Comparing this 
with~\eqref{eq:new-event}, we get $\tau^X_{R, A} < t < \tau^X_{R^\prime}$. This completes the proof of~\eqref{eq:comparison-of-events}. 

To complete the proof of Lemma~\ref{lemma:intermediate-positivity}, we need only to show that 
\begin{equation}
\label{eq:new-event-positive}
\mathbb P\left(\tau^W_{R, A} < c_1t < c_2t < \tau^W_{R'}\right) > 0.
\end{equation}
Indeed, $W$ is Walsh Brownian motion, starting from the origin. As noted earlier before, $\norm{W(t)} = Z(t)$ is a reflected Brownian motion on the half-line, starting from zero. Define $\tau^Z_a := \inf\{t \ge 0\mid Z(t) = a\}$ for $a > 0$. Then $\arg W(\tau^Z_a) \sim \mu$ is independent of $Z$. Therefore, the probability in the left-hand side of~\eqref{eq:new-event-positive} is equal to 
\begin{equation}
\label{eq:positive}
\mu(A)\cdot\mathbb P\left(\tau^Z_R < c_1t < c_2t < \tau^Z_{R'}\right) > 0.
\end{equation}
That the left-hand side of~\eqref{eq:positive} is indeed positive follows from $\mu(A) > 0$ and the properties of a reflected Brownian motion on the half-line. This proves~\eqref{eq:new-event-positive}, and with it Lemma~\ref{lemma:intermediate-positivity}. 
\end{proof} 

\medskip


\section*{Acknowledgements} The authors are thankful to \textsc{Krzysztof Burdzy, Zhen-Qing Chen, Ioannis Karatzas, Minghan Yan,  Kouji Yano} for useful discussions. They would also like to express their gratitude to \textsc{Zhen-Qing Chen} for the Dirichlet form construction. They are indebted to anonymous referees and associate editor who pointed out  errors in the earlier manuscripts and made numerous suggestions. 
The first author is supposed in part by NSF grants DMS 1313373 and 1615229. The second author is supported in part by NSF grant DMS 1409434 of \textsc{Jean-Pierre Fouque}.


\medskip\noindent

\end{document}